\documentclass[12pt,reqno]{amsart}
\usepackage{amssymb,amsmath,amsthm,amsfonts,fancyhdr}
\usepackage{tikz}
\usepackage{indentfirst}
\usepackage{mathrsfs}
\usepackage{setspace}
\usepackage{color}
\usepackage{cite}
\usepackage{bm}
\usepackage{geometry}
\numberwithin{equation}{section}

\newtheorem{thm}{Theorem}[section]
\newtheorem{defn}[thm]{Defnition}
\newtheorem{lemma}[thm]{Lemma}
\newtheorem{cor}[thm]{Corollary}
\newtheorem{pro}[thm]{Proposition}
\newtheorem{re}[thm]{Remark}

\renewcommand{\det}{\mbox{det}}
  
  \newcommand{\dist}{\mbox{dist}}

  \numberwithin{equation}{section}
  \numberwithin{figure}{section}

\begin{document}
\title[Global Solutions]{Global solutions to the fully nonlinear obstacle problem }
\author{Yi Cao}
\author{Dongsheng Li}
\begin{abstract} In this paper, we investigate global solutions to the fully nonlinear obstacle problem. We prove that, after factoring out invariant directions, the coincidence
set of any global solution belongs to one of the following four types: a single point, a compact convex set with nonempty interior, a ray for $n=1$, or a slender body for $n\geq 2$, where $n$ denotes the spatial dimension.

Furthermore, we establish a bijection between the collection of all global solutions with compact coincidence sets and a family of functions describing the asymptotic behavior of such solutions at infinity.
\end{abstract}
\footnotetext{\textit{Key words and phrases.} Global solutions, obstacle problem, fully nonlinear elliptic equations.\newline}
\footnotetext{This research is supported by NSFC 12071365.\newline}

\footnotetext{Yi Cao\newline
              Caoyi@snnu.edu.cn\newline
              School of Mathematics and Statistics, Shaanxi Normal University, Xi'an, P.R. China 710062.\newline}

\footnotetext{{Dongsheng Li\newline
              lidsh@mail.xjtu.edu.cn\newline
              School of Mathematics and Statistics, Xi'an Jiaotong University, Xi'an, P.R. China 710049.}}
\maketitle
\section{Introduction}

In this paper, we consider the fully nonlinear obstacle problem as follows:
\begin{equation}\label{E1.1}
\left\{\begin{split}
F(D^2 u)\leq 1\quad &\mbox{in}\quad\mathbb{R}^n,\\
u\geq 0\quad&\mbox{in}\quad\mathbb{R}^n,\\
F(D^2 u)= 1\quad &\mbox{in}\quad \{u>0\},
\end{split}\right.
\end{equation}
where $u\in C(\mathbb{R}^n)$ is understood in the viscosity sense. Throughout this paper, we impose the following assumptions on the operator
$F: \mathcal{S}_{n \times n}\rightarrow \mathbb{R}$:
\newline
(H1) $F$ is uniformly elliptic;
\newline
(H2) $F(0)=0$;
\newline
(H3) $F$ is $C^{1,\alpha_{F}}$ for some $\alpha_{F}\in(0,1]$.

Given a global solution $u$, its zero set is called the coincidence set, denoted by
\begin{equation}\label{E1.2}
\Omega^c(u):=\{x:u(x)=0\}.
\end{equation}
The set $\Omega^c(u)$ is an unknown set, and $\Gamma(u):=\partial\{u>0\}$ is called the free boundary.

Over the past few decades, the obstacle problem with a nonlinear operator has attracted great interest in the PDE community\cite{FS01, IM, Lee01, Lee02, SH}.
Most of these papers investigate local properties of solutions. In this paper, we focus on global solutions defined on the whole space.

Notice that if the operator $F=\triangle$, then the obstacle problem (\ref{E1.1}) is known as
\emph{the classical obstacle problem} as follows:
\begin{equation}\label{E1.3}
\triangle u=\chi(\{u>0\}),\quad u\geq 0, \quad \mbox{in} \quad\mathbb{R}^n.
\end{equation}
We refer to \cite{Ca01, PSU} for an introduction to results concerning the classical obstacle problem.

The research on the global solutions to the classical obstacle problem (\ref{E1.3}) has a history of nearly a century. In works \cite{Sh, KM01}, it was conjectured that
\emph{the coincidence set of a global solution to the classical obstacle problem (\ref{E1.3}) is either a half-space, an ellipsoid, a paraboloid, or a cylinder with
an ellipsoid or a paraboloid as base if it has non-empty interior. }

Under the assumption that the coincidence sets of global solutions are compact, the previous conjecture was confirmed in three dimensional space \cite{Di, Le01}. For higher dimensions, still under the compactness assumption, the conjecture was verified in \cite{DF, FS02}. A brief proof of the known result in any dimension was provided in \cite{EW}.

Removing the compactness assumption, the conjecture was proven in \cite{Sa} when the spatial dimension $n=2$. Assuming that the coincidence sets are contained between two
parallel hyperplanes, all possible solutions are provided in \cite{KM01}. Recently, the above conjecture was proven for $n\geq 6$ \cite{ESW}, and for all dimensions \cite{EFW}.
For $n=2$, avoiding the complex analysis approach, a new proof in the spirit of reference \cite{EFW} was founded in \cite{SW}.

Since all possible global solutions to the classical obstacle problem are known, some scholars naturally began to consider global solutions to the fully nonlinear obstacle
problem (\ref{E1.1}). For $n\geq 3$, under the assumption that the operator $F$ is convex, Eberle-Yu \cite{EY} characterized all possible global solutions with compact coincidence sets. To the best of the authors knowledge, a classification of global solutions to the fully nonlinear obstacle problem (\ref{E1.1}) remains open.

Our main results are Theorem \ref{thm:1}, Theorem \ref{thm:5}, and Corollary \ref{cor:3}. To simplify our statement, we introduce the following definition.

\begin{defn}
 \label{def:global solution}
 (i) $\mathcal{G}_c:=\big\{$ $u$: $u$ satisfies the obstacle problem (\ref{E1.1}), and $\Omega^c(u)$ is compact.$\big\}$

 (ii) $\mathcal{G}_{\infty}:=\big\{$ $u$: $u$ satisfies the obstacle problem (\ref{E1.1}) in $\mathbb{R}^n$, where $n\geq 2$. For such $u$,
 there exists a unique unit vector $e\in\partial B_1$, such that $u(\cdot+te)\leq u(\cdot)$ for $t\in\mathbb{R}_+$.$\big\}$

(iii) $\mathcal{Q}_c:=\big\{q(x)$ : $q$ is a function satisfies (\ref{E1.23})-(\ref{E1.24}). $\big\}$

\begin{equation}\label{E1.23}
q(x)=\left\{\begin{split}
&\frac{1}{2}(x-x_0)^TA(x-x_0)+D,\qquad n\geq 3,\\
&\frac{1}{2}(x-x_0)^TA(x-x_0)+D\log|P(x-x_0)|,\qquad n=2,
\end{split}\right.
\end{equation}
where $A\in\mathcal{S}_{n\times n}$ is a positive definite matrix, $x_0\in \mathbb{R}^n$, and $D\leq 0$. Furthermore, we have $F(A)=1$.

The positive definite matrix $P$ is determined as follows:
\begin{equation}\label{E1.24}
P=\left(\sqrt{[F_{ij}(A)]_{2\times 2}}\right)^{-1},
\end{equation}
where $F_{ij}(\cdot)$ denotes the derivatives of the operator $F$.
\end{defn}

\begin{thm}
 \label{thm:1}
 Let $F$ be a convex operator satisfying assumptions (H1)-(H3), and let $u$ be a global solution to the fully nonlinear obstacle problem (\ref{E1.1}). Suppose that the free boundary $\Gamma(u)\neq\emptyset$.

 Then $u$ is convex. Furthermore, one of the following cases holds:

 (i) $\Omega^{c}(u)$ is a single point in $\mathbb{R}^n$, and $u$ is a quadratic polynomial of the form
\begin{equation*}
u(x)=\frac{1}{2}(x-x_0)^TA(x-x_0),
\end{equation*}
where $A\in\mathcal{S}_{n\times n}$ is a positive definite matrix.

(ii) $u\in\mathcal{G}_c$ with $\mathrm{Int}\,\Omega^{c}(u)\neq\emptyset$. In addition, $\Omega^{c}(u)$ is a $C^{1,\alpha}$ domain for some exponent $\alpha\in(0,1)$.

(iii) $n=1$, and $\Omega^{c}(u)$ is a ray in $\mathbb{R}^1$. After a suitable translation and rotation of the coordinate system, $u$ can be rewritten as
\begin{equation}\label{E1.5}
u(x)=\frac{\gamma}{2}x_{+}^{2},
\end{equation}
where the constant \(\gamma\in [1/\Lambda,1/\lambda]\) satisfies \(F(\gamma)=1\).

(iv) $u\in\mathcal{G}_{\infty}$. After a suitable translation and rotation of the coordinate system, $u$ satisfies the following properties (a)-(c).

  $\quad$(a) $\partial_n u>0$ in $\Omega(u)$, and \(0\in\Omega^{c}(u)\subseteq\{x\in\mathbb{R}^{n}: x_n\leq 0\}.\)

\(\quad\)(b) There exists a unique positive definite matrix \(A_{n-1}\in\mathcal{S}_{(n-1)\times (n-1)}\) such that
 \begin{equation}\label{E1.13}
 u_{r}(x',x_n)\rightarrow\frac{1}{2}x'^TA_{n-1}x'\quad\text{in}\quad C^{1,\alpha}_{loc}(\mathbb{R}^{n}),\quad\text{as}\quad r\rightarrow +\infty,
 \end{equation}
for all $\alpha\in(0,1)$ (see Notations 9). Furthermore,
 \begin{equation}\label{E1.25}
F\left(
\begin{gathered}
\begin{matrix}
A_{n-1} & 0 \\ 0 & 0
\end{matrix}
\end{gathered}\right)=1.
 \end{equation}

  $\quad$(c) For any \(\rho>0\), define the sliced non-contact set
  \begin{equation}\label{E1.19}
  \Omega^{c}_{\rho}(u):=\Omega^{c}(u)\cap\{x_n=-\rho\}.
  \end{equation}
The following relations hold:
\begin{equation}\label{E1.18}
\left\{\begin{split}
&\lim\limits_{\rho\rightarrow+\infty}\mbox{diam}(\Omega^{c}_{\rho}(u))=+\infty,\\
&\lim\limits_{\rho\rightarrow+\infty}\mbox{diam}(\Omega^{c}_{\rho}(u))/\rho=0,\\
&\lim\limits_{\rho\rightarrow+\infty}\mbox{diam}\big(\Omega^{c}_{\rho}(u)\big)/\mathrm{MD}\big(\Omega^{c}_{\rho}(u)\big)= C,
\end{split}\right.
\end{equation}
where the constant $C$ depends only on the operator $F$ and the matrix \(A_{n-1}\).

(v) $u$ is a lower dimensional global solution, that is, there exists a unit vector $e\in\partial B_1$, such that $u(\cdot+te)\equiv u(\cdot)$ for all $t\in\mathbb{R}$.
\end{thm}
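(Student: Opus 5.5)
We first prove that $u$ is convex, and then split into cases according to the geometry of the convex set $\Omega^c(u)$.

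\textbf{Convexity.} Since $F$ is convex and $C^{1,\alpha_F}$, the regularity theory for the fully nonlinear obstacle problem gives $u\in C^{1,1}_{loc}$ with quadratic growth. The blow-down sequence $u_r(x)=u(rx)/r^2$ is therefore precompact, and its limits are global solutions. For a direction $e$, the second derivative $\partial_{ee}u$ is a viscosity supersolution of the linearized equation in $\{u>0\}$: differentiate $F(D^2u)=1$ twice along $e$ and use the convexity of $F$, in incremental-quotient form. Moreover, $\partial_{ee}u\ge 0$ on $\Omega^c(u)$ in the appropriate sense, and it is bounded below. To show $\inf \partial_{ee}u\ge 0$, I would argue by contradiction. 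Take a sequence almost realizing the negative infimum and rescale around it. In the limit the infimum is attained at an interior point of the positivity set, so the strong minimum principle makes $\partial_{ee}u$ constant and negative there. This contradicts the nonnegativity of $u$ combined with the quadratic upper bound along lines in direction $e$. Hence $D^2u\ge0$, and $\Omega^c(u)$ is a closed convex set.

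\textbf{Classification of the convex set $K=\Omega^c(u)$.} First, if $K$ contains a whole line, then by convexity of $u$ and $u\ge0$ the function $u$ is constant along that direction, which is case (v). So assume $K$ contains no line. Three cases remain.

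If $K$ is a single point, say $x_0$, the blow-down limits are homogeneous quadratics $\frac12x^TAx$ with $F(A)=1$. Combining convexity with uniqueness of the blow-down then gives $u=\frac12(x-x_0)^TA(x-x_0)$; here I would compare $u$ with this quadratic via the comparison principle on $\mathbb{R}^n\setminus\{x_0\}$, using the fact that a point has zero capacity-type size. This is case (i).

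If $K$ is compact with more than one point, I would show that its interior is nonempty. A lower-dimensional compact convex set would force a nonconvex blow-up profile, or it could be excluded by a barrier argument. Then $u\in\mathcal G_c$, and $C^{1,\alpha}$ regularity of $\partial K$ follows from the free boundary regularity at regular points: every free boundary point of a convex coincidence set with interior is regular, since the blow-ups there are half-space solutions. This is case (ii).

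If $K$ is unbounded without lines, it contains a ray. For $n=1$, $K$ is a half-line, and solving the ODE $F(u'')=1$ on the complement with $u=u'=0$ at the endpoint gives (E1.5). For $n\ge2$, let $e$ be a recession direction of $K$. Convexity and $u=0$ on the ray imply $u(\cdot+te)\le u(\cdot)$. The uniqueness of $e$ should follow from $K$ containing no line together with the strict monotonicity $\partial_n u>0$, which in turn follows from the strong maximum principle applied to $\partial_{-e}u\ge0$; this gives (a).

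\textbf{Cases (b) and (c).} For (b), blow-down limits are convex global solutions that are invariant in $e_n$ and vanish on the negative $x_n$-axis. They are therefore $(n-1)$-dimensional solutions vanishing at a point, hence quadratics $\frac12x'^TA_{n-1}x'$ by case (i) in dimension $n-1$; this yields (E1.25). Uniqueness of the limit requires a monotonicity or Weiss-type argument, or comparison with nested quadratic barriers. For (c), the slice asymptotics should follow by comparing $u$ on $\{x_n=-\rho\}$ with solutions in $\mathcal G_c$ in dimension $n-1$ that have the same quadratic part $A_{n-1}$. The shape ratio is then determined by $F$ and $A_{n-1}$, while sublinear growth follows from the $C^{1,\alpha}_{loc}$ convergence to a profile whose zero set is the $x_n$-axis.

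\textbf{Expected main obstacle.} I expect the hardest step to be (c), specifically the limit of $\mathrm{diam}/\mathrm{MD}$. It requires a quantitative expansion of $u-\frac12x'^TA_{n-1}x'$ near the axis, whose second-order behaviour is governed by the linearized operator $F_{ij}$. My first attempt would be to rescale slices by their diameter and show that the rescaled sets converge to the ellipsoid-like shape that $F_{ij}$ forces.
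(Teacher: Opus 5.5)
There are genuine gaps in your treatment of the unbounded case $n\ge 2$: it skips the two results that carry most of the weight of the theorem.

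\textbf{Uniqueness of the recession direction.} This does not follow from ``$\Omega^c(u)$ contains no line'' together with $\partial_n u>0$. A closed convex set without lines can have a recession cone that is a whole pointed convex cone of directions. For every $\tau$ with $-\tau$ in that cone, Lemma~\ref{le:2.4} and the strong maximum principle give $\partial_\tau u>0$ in $\Omega(u)$, all at once and with no contradiction. Ruling this out is Theorem~\ref{thm:thm4.1}. Assuming $V(u)$ has angular width $2\theta_0\in(0,\pi)$, the paper combines a boundary-Harnack-type estimate near a convex set (Lemma~\ref{lemma 4.0}, in the spirit of De Silva--Savin) with the slice-normal control of Lemma~\ref{lemma 4.5}. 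These give $\partial_n u_R\le N\,\partial_{\tau(-\theta_0)}u_R$ or $\partial_n u_R\le N\,\partial_{\tau(\theta_0)}u_R$ in $B_1$, uniformly in $R$. This upgrades monotonicity to the direction $\tau(-\theta_0-\eta)$ or $\tau(\theta_0+\eta)$ with $\eta>0$ independent of $R$. It thus forces a direction at angle $\theta_0+\eta$ from $-e_n$ into $V(u)$, contradicting the maximality of $\theta_0$.

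\textbf{Finite-width coincidence sets.} Even with a unique recession direction, nothing in your plan excludes a coincidence set that is unbounded but of finite width, e.g.\ contained in $B'_R\times(-\infty,0]$. Such a set violates the first limit in (c), which you never address. Excluding it is Theorem~\ref{le:4.1}, a Liouville-type statement that $\mathrm{MD}(\Omega^c(u))=+\infty$ unless $u$ is lower dimensional. The proof passes to $u_{n-1}=\lim_{x_n\to-\infty}u$ (Lemma~\ref{le:4.4}) and iterates until a solution in $\mathcal{G}_c$ is reached. It then uses the precise expansion of Theorem~\ref{thm:5}, including the constant term (logarithmic for $n=3$), to place the barrier $\frac12x'^TAx'+Bx'+R^{-1/4}x_n+1$ below $u$ on the bottom of $\mathcal{C}_{R,R}$. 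Comparison, Harnack and convexity then give $\sup_{B_1}\partial_n u\lesssim R^{-1/4}$, hence $\partial_n u\equiv 0$. For $n=2$ a separate Hopf-type argument along the free boundary is needed. This is where Theorem~\ref{thm:5} actually enters the proof.

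\textbf{Positivity of $A_{n-1}$ in (b).} You argue that the blow-down limit vanishes on $-\mathbb{R}_+e_n$, so as an $(n-1)$-dimensional function it vanishes at $x'=0$. That does not make its coincidence set a single point, so case (i) does not apply and a degenerate $A_{n-1}$ is not excluded. The blow-down of $\Omega^c(u)$ only bounds the zero set of the limit from inside; indeed $\frac12x'^TA_{n-1}x'$ already vanishes on the whole $x_n$-axis. The paper proves $A_{n-1}>0$ (Lemma~\ref{lemma 4.11}) by rescaling the slices by $\rho_k=\mathrm{diam}\,\Omega^c_k(u)$. These tend to $+\infty$ only because of Theorem~\ref{le:4.1}, which is another reason that step cannot be skipped. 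The limit $v$ is independent of $x_n$ and lies in $\mathcal{G}_c$ in $\mathbb{R}^{n-1}$, with positive definite asymptotic matrix $A^*_{n-1}$. Monotonicity in $x_n$ gives $A^*_{n-1}\le A_{n-1}$, and $F_{n-1}(A^*_{n-1})=F_{n-1}(A_{n-1})=1$ together with uniform ellipticity forces equality.

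\textbf{The third limit in (c).} Once the above is available, this limit, which you single out as the main obstacle, is short. The rescaled slices converge to $\Omega^c(v)$. By the bijection of Theorem~\ref{thm:5}, all elements of $\mathcal{G}_c$ with the same quadratic part and $D<0$ are translates of rescalings $\lambda^2v(\cdot/\lambda)$ of one another. Hence $\mathrm{diam}/\mathrm{MD}$ of $\Omega^c(v)$ depends only on $F$ and $A_{n-1}$. You need neither an expansion of $u-\frac12x'^TA_{n-1}x'$ near the axis nor any identification of the limiting shape.

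\textbf{Minor point: empty interior.} When $\mathrm{Int}\,\Omega^c(u)=\emptyset$ (your single-point case and your lower-dimensional compact case), convexity gives $|\Omega^c(u)|=0$. So the $C^{1,1}$ function $u$ is a strong, hence viscosity, solution of $F(D^2u)=1$ on $\mathbb{R}^n$. By Evans--Krylov it is a quadratic polynomial, whose zero set is an affine subspace, which gives (i) or (v) directly. This replaces your unclear ``nonconvex blow-up profile'' remark.

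Your convexity sketch, a standard route where the paper simply quotes Lee--Shahgholian, and the regular-point argument for (ii) are fine.
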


\begin{thm}
 \label{thm:5}
  Assume that the spatial dimension $n\geq 2$. Let $F$ be a convex operator satisfying assumptions (H1)-(H3). Then there exists a bijection $\Phi:\mathcal{G}_c\rightarrow \mathcal{Q}_c$
 such that
\begin{equation}\label{E1.12}
u(x)=\left\{\begin{split}
\Phi(u)&+O(|x|^{2-n}),\qquad n\geq 3,\\
\Phi(u)&+C+O(|x|^{-1}),\quad n=2,
\end{split}\right.
\quad\mbox{as}\quad |x|\rightarrow +\infty,
\end{equation}
where $C$ is a unique constant depending only on $\Phi(u)$.
\end{thm}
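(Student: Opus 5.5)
We split the argument into three parts: the asymptotic expansion of a given $u\in\mathcal{G}_c$, which defines $\Phi$; injectivity of $\Phi$; and surjectivity, i.e.\ constructing a solution with prescribed asymptotics.

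\emph{Step 1: definition of $\Phi$.} Let $u\in\mathcal{G}_c$. By Theorem \ref{thm:1}, $u$ is convex and $\Omega^c(u)$ is either a point or a compact convex set. Case (v) (lower dimensional) and case (iv) are excluded because the coincidence set is compact.
\begin{itemize}
\item[$\bullet$] \emph{Blow-down limit.} The blow-downs $u_r(x)=u(rx)/r^2$ are uniformly $C^{1,1}$ and convex. They converge to a convex global solution whose coincidence set is $\{0\}$. By case (i) of Theorem \ref{thm:1} the limit is $\frac12 x^TAx$ with $F(A)=1$. Uniqueness of $A$ should follow from a monotonicity or flatness argument, or from the convexity of $u$ combined with $C^{1,\alpha_F}$ regularity of $F$.
\item[$\bullet$] \emph{Linearization.} Set $w=u-\frac12 x^TAx$ outside a large ball. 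Since $F(D^2u)=1=F(A)$ there, we can write
\begin{equation*}
0=F(A+D^2w)-F(A)=a_{ij}(x)\,w_{ij},\qquad a_{ij}(x)=\int_0^1F_{ij}(A+tD^2w)\,dt .
\end{equation*}
Because $D^2w\to0$ at infinity (blow-down convergence plus interior $C^{2,\alpha}$ estimates, which use (H3)), we get $a_{ij}\to F_{ij}(A)$. The rate of this convergence is controlled by the decay of $D^2w$ through $|D^2w|^{\alpha_F}$.
\item[$\bullet$] \emph{Constant-coefficient reduction.} The change of variables $y=\sqrt{F_{ij}(A)}^{-1}x$ turns the principal part into the Laplacian, with a perturbation. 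Exterior-domain asymptotics for nearly harmonic functions (a Liouville/expansion theorem in exterior domains, bootstrapped) then give
\begin{equation*}
w=b\cdot x+c+O(|x|^{2-n})\quad(n\ge3),\qquad w=b\cdot x+D\log|Px|+c+O(|x|^{-1})\quad(n=2).
\end{equation*}
\item[$\bullet$] \emph{Normalization.} Absorb the linear term $b\cdot x$ by choosing $x_0$, and absorb the constant into $D$ when $n\ge3$. The resulting quadratic-plus-tail is $\Phi(u)$. The sign condition $D\le0$ should come from comparing $u$ with the quadratic polynomial: $F(D^2u)\le1$ in the whole space, with a defect supported on $\Omega^c(u)$. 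Integrating against the fundamental solution of $F_{ij}(A)\partial_{ij}$ then gives the sign.
\end{itemize}

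\emph{Step 2: injectivity.} Suppose $u,v\in\mathcal{G}_c$ satisfy $\Phi(u)=\Phi(v)$. Then $u-v\to0$ at infinity, for $n=2$ after checking that the constants $C$ agree; this is where "$C$ depends only on $\Phi(u)$" is needed. We then run a comparison-principle argument for the obstacle problem. Both $\max(u,v)$ and $\min(u,v)$ are super- and sub-solution type objects, and ellipticity of $F$ applies on $\{u>0\}\cup\{v>0\}$. Together with the maximum principle on large balls, letting the radius tend to infinity, this gives $u\le v+\varepsilon$ for every $\varepsilon>0$, and symmetrically $v\le u+\varepsilon$. Hence $u=v$.

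\emph{Step 3: surjectivity.} Given $q\in\mathcal{Q}_c$, we construct $u$ by Perron's method. Take the minimal nonnegative supersolution lying above $q-\varepsilon$-type barriers. Equivalently, solve the obstacle problem on $B_R$ with boundary data $q$ and let $R\to\infty$, using upper and lower barriers of the form $q\pm C|x|^{2-n}$ and then showing the limit has the exact asymptotics $q$.

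\emph{Main obstacle.} The hardest part is surjectivity, in particular hitting the exact constant $D$ and the center $x_0$. For $n=2$ the logarithmic term makes barriers delicate. I would first try a continuity/degree argument in the parameters. The map from the size of the obstacle region to $D$ should be monotone via comparison, so intermediate values of $D\le0$ can be attained. The exponent $2-n$ in Step 1 is a second delicate point: the coefficient perturbation decays only like $|D^2w|^{\alpha_F}$, so one must iterate the decay estimate to reach the sharp rate $O(|x|^{2-n})$.
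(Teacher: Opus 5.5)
Your Step 1 is fine once its heuristic parts (uniqueness of the blow-down, the sharp decay rate) are replaced by the exterior expansion theorems the paper quotes (Lemma~\ref{lemma 3.1} for $n=2$, Eberle--Yu for $n\ge 3$). For $n\ge3$ your injectivity argument also works, since $u-v=O(|x|^{2-n})$ and Proposition~\ref{pro:2.16} applies. One caution: cite Proposition~\ref{pro:2.12} and the Evans--Krylov Liouville theorem directly rather than Theorem~\ref{thm:1}, because the paper's proof of Theorem~\ref{thm:1} uses the present theorem. Beyond this there are two genuine gaps.

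\emph{Injectivity for $n=2$.} You say $u-v\to0$ ``after checking that the constants $C$ agree'', and that this is where ``$C$ depends only on $\Phi(u)$'' is used. But that clause is part of what must be proved. It says exactly that two solutions with the same $q$ have the same constant, which is the whole content of injectivity in the plane. A priori $u=q+C_1+O(|x|^{-1})$ and $v=q+C_2+O(|x|^{-1})$ with $C_1>C_2$, and comparison then only gives $v\le u$, which is no contradiction.

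The paper supplies a real argument here, in four steps.
\begin{enumerate}
\item If $D=0$, then $u=q$ (Lemma~\ref{lemma 3.3}(ii), via $\triangle w\le0$ and the divergence theorem).
\item If $D<0$, then $x_0\in\mathrm{Int}\,\Omega^c(u)$ (Lemma~\ref{lemma 3.3}(i)).
\item If $C_1>C_2$, then $u>v$ on a large circle, and the Hopf-lemma argument of Lemma~\ref{lemma 3.2} gives the strict nesting $\Omega^c(u)\subseteq\mathrm{Int}\,\Omega^c(v)$.
\item The slight dilation $u_r(x)=r^{-2}u(rx)$, with $r<1$ close to $1$, still satisfies $\Omega^c(u_r)=r^{-1}\Omega^c(u)\subseteq\mathrm{Int}\,\Omega^c(v)$. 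Its logarithmic coefficient, however, becomes $D/r^2<D$, so $u_r<v$ near infinity. Comparison then forces $\Omega^c(v)\subseteq\Omega^c(u_r)$, a contradiction.
\end{enumerate}
Nothing in your plan plays this role; in particular you never use the way the two-dimensional scaling moves $D$.

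\emph{Surjectivity.} Your upper barrier $q+C|x|^{2-n}$ (that is, $q+C$ when $n=2$) is not a supersolution. Normalize $F_{ij}(A)=\delta_{ij}$; then $|x|^{2-n}$ and $\log|x|$ are harmonic. Convexity gives $F(A+M)\ge F(A)+\mathrm{tr}\,M$, hence $F(D^2(q\pm C|x|^{2-n}))\ge1$, so both of your barriers are subsolutions. As a result, nothing in your scheme prevents the limit of the approximating solutions $u_R$ from having a different logarithmic coefficient or constant, or from collapsing to $\frac12x^TAx$. The fallback ``continuity/degree, monotone in the obstacle region'' argument is not substantiated.

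The paper proceeds as follows. It solves with boundary data $q_1=\frac12x^TAx-\log|x|$ and obtains a limit $u_\infty\in\mathcal{G}_c$ with $\Phi(u_\infty)=\frac12x^TAx+D_\infty\log|x|$. It then proves $D_\infty\ne0$ by a quantitative non-degeneracy argument: the barrier of Lemma~\ref{le:2.1}, plus a logarithmic-potential correction for the linearization error. Finally it reaches every $D<0$ via $s^{-2}u_\infty(sx)$, which replaces $D_\infty$ by $D_\infty/s^2$, and every $x_0$ by translation. This scaling alone removes your ``exact $D$'' difficulty: surjectivity reduces to producing one nontrivial solution for each $A$.

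Alternatively, your barrier scheme can be repaired using (H3). Since $F(A+M)\le1+\mathrm{tr}\,M+K|M|^{1+\alpha_F}$, the function $q+C|x|^{2-n}-c|x|^{-k}$ with $n-2<k<n(1+\alpha_F)-2$ is a true supersolution outside a large ball. For $n=2$ use $q+C-c|x|^{-k}$ with $0<k<2\alpha_F$. Combined with the subsolution $q-C|x|^{2-n}$ and the bound $u_R\le\frac12(x-x_0)^TA(x-x_0)$, this traps $u_R$ within $O(|x|^{2-n})$ (respectively $O(1)$) of $q$, uniformly in $R$. That pins down $\Phi$ of the limit. Some such correction, or the paper's scaling-plus-non-degeneracy argument, is indispensable.
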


\begin{re}
\label{re:1}
   For the spatial dimension $n\geq 3$, Eberle-Yu \cite{EY} established the bijection $\Phi:\mathcal{G}_c\rightarrow \mathcal{Q}_c$. For $n=1$, if $u\in\mathcal{G}_c$, then we may assume  $\Omega^{c}(u)=[a, b]$ since the global solution $u$ is convex. Then $u$ is of the form
\begin{equation}\label{E1.6}
u(x)=\frac{\gamma}{2}[(x-a)_{-}^{2}+(x-b)_{+}^{2}],
\end{equation}
where $F(\gamma)=1$ and $\gamma\in [1/\Lambda,1/\lambda]$.
\end{re}

In Theorem \ref{thm:1}, if we replace the convexity of $F$ by concavity, we obtain the same results provided we strengthen the assumptions to require
that the global solution is convex. For $n=2$, invoking Proposition \ref{pro:2.12}, we can remove the assumption that $u$ is convex.

\begin{thm}
\label{cor:3}
Let $F$ be a concave operator satisfying assumptions (H1)-(H3), and let $u$ be a global solution to the fully nonlinear obstacle problem (\ref{E1.1}).
Assume that the free boundary $\Gamma(u)\neq\emptyset$. Assume additionally that $u$ is convex. Then all conclusions in Theorem \ref{thm:1} hold.
\end{thm}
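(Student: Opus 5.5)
The plan is to reduce Theorem \ref{cor:3} to the proof of Theorem \ref{thm:1}, checking where convexity of $F$ was used. In the convex case, convexity of $F$ has two roles. First, it yields convexity of $u$, via semiconvexity or Evans--Krylov type estimates and a Liouville/blow-down argument showing $\partial_{ee}u\geq 0$. Second, it gives $C^{2,\alpha}$ regularity and the second-derivative inequalities used in the classification. For concave $F$, Evans--Krylov still gives interior $C^{2,\alpha}$ regularity of $u$ in $\{u>0\}$. Convexity of $u$ is now an assumption, so the first role is no longer needed. The work is therefore to check that every later step uses only (H1)--(H3), the convexity of $u$, and regularity that Evans--Krylov supplies for concave operators as well.

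The key steps, in order, are as follows.

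\textbf{Step 1: basic properties.} I obtain optimal $C^{1,1}$ regularity, nondegeneracy, and convexity of the coincidence set $\Omega^c(u)$, which is the zero set of a convex nonnegative function. Optimal regularity for the fully nonlinear obstacle problem holds under (H1)--(H3) together with concavity or convexity.

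\textbf{Step 2: blow-down.} Rescalings $u_r(x)=u(rx)/r^2$ converge to a convex global solution $u_\infty$ that is homogeneous of degree two, with a convex cone as coincidence set. Classifying such blow-downs uses the equation $F(D^2u_\infty)=1$ off a cone and convexity. This yields either a quadratic polynomial $\frac12 x^TAx$ with $F(A)=1$, or a half-space type solution $\frac{\gamma}{2}(x\cdot e)_+^2$. The classification rests on convexity of $u_\infty$ and on uniform ellipticity, not on the sign of the second derivatives of $F$.

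\textbf{Step 3: case split.} I split by the dimension of the convex set $\Omega^c(u)$ and its recession cone.
\begin{itemize}
\item If $\Omega^c(u)$ contains a full line, convexity of $u$ forces invariance in that direction. This is case (v).
\item If it is bounded, it is a point or a compact body, giving cases (i) and (ii). In the point case, a Liouville theorem for $C^{1,1}$ solutions of $F(D^2u)=1$ off a point shows $u$ is quadratic; I would use removable singularity for bounded Hessian.
\item If it contains a ray but no line, a monotonicity argument along the ray direction $e$ applies, since $u(\cdot+te)\le u$ by convexity. This gives $\partial_n u>0$ and the structure in (iv) or, when $n=1$, case (iii).
\end{itemize}

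\textbf{Step 4: the asymptotics in (iv)(b),(c).} The limit (b) follows from uniqueness of the blow-down, using monotonicity in $x_n$ and compactness. The slice estimates (c) come from comparison of the sliced sets with level sets of the limiting quadratic.

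The main obstacle I expect is the $C^{1,\alpha}$ regularity of $\partial\Omega^c(u)$ in case (ii), together with the uniqueness of the blow-down limit in (iv). In the convex-$F$ setting these likely used regularity of the free boundary near regular points, which relies on directional monotonicity $C\partial_e u-u\ge0$. That inequality is proved by applying the linearized operator $F_{ij}(D^2u)\partial_{ij}$ to $\partial_e u$ and to $u$. For concave $F$, $\partial_e u$ satisfies a linear equation with bounded measurable coefficients, while $u$ satisfies $F_{ij}(D^2u)u_{ij}\le 1$ by concavity. The inequality directions reverse relative to the convex case, so I will redo that comparison step directly. I would first try working with $\partial_e u$ as a solution of the linearized equation, via Evans--Krylov smoothness, and using convexity of $u$ to control signs.
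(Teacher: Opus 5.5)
You find the right obstruction but do not remove it, and removing it is the one new ingredient in the paper's proof. With $a^{ij}=F_{ij}(D^2u)$, concavity only gives $a^{ij}\partial_{ij}u\le 1$. So $-u$ is no longer a strict supersolution of the linearized operator, and the directional-monotonicity lemma behind the $C^1$ regularity of $\Gamma(u)$ is lost.

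Your proposed remedy, working with $\partial_e u$, aims at the wrong term. $\partial_e u$ solves $a^{ij}\partial_{ij}(\partial_e u)=0$ whatever the sign of $D^2F$. What you need is a positive lower bound on $a^{ij}\partial_{ij}u$, and convexity of $u$ gives it directly. Since $D^2u\ge 0$, ellipticity gives $a^{ij}\partial_{ij}u\ge\lambda\|D^2u\|$ and $1=F(D^2u)-F(0)\le\Lambda\|D^2u\|$. Hence $a^{ij}\partial_{ij}u\ge\lambda/\Lambda$ in $\Omega(u)$.

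Then $w=C_0\partial_eu-u+\frac{\lambda}{2n\Lambda^2}|x-x_0|^2$ satisfies $a^{ij}\partial_{ij}w\le 0$ in $\Omega(u)\cap B_1$, with $w\ge 0$ on $\Gamma(u)$. The minimum principle then gives Lemma \ref{le:5.2} with $\epsilon_0\le\lambda/(8n\Lambda^2)$. Only with this lemma does the standard regular-point argument yield $C^1$, and then $C^{1,\alpha}$, regularity of $\Gamma(u)$. Without it, case (ii) has no footing in the concave setting. Neither does anything downstream that uses free boundary regularity: the normals in Lemma \ref{lemma 4.5}, Lemma \ref{lemma 4.9}, and Step 3 of Theorem \ref{le:4.1}.

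Your Steps 3--4 also stop short of case (iv). Knowing that $\Omega^c(u)$ contains a ray but no line does not give $u\in\mathcal{G}_\infty$. You need two further facts:
\begin{itemize}
\item The set $V(u)$ of recession directions is a single ray. Otherwise the direction in $\mathcal{G}_\infty$ is not unique, and the blow-down $\frac12x^TAx$ can have a kernel of dimension at least two, so $A_{n-1}>0$ fails. This is Theorem \ref{thm:thm4.1}, which rests on Lemma \ref{lemma 4.0}.
\item $\mathrm{MD}(\Omega^c(u))=+\infty$ unless $u$ is lower dimensional. Otherwise an unbounded coincidence set inside a slab fits none of (i)--(v). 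This is Theorem \ref{le:4.1}, which rests on Lemma \ref{lemma 4.9}.
\end{itemize}
Likewise, $A_{n-1}>0$ and the limits in (c) come from the second rescaling $v_k(x)=\rho_k^{-2}u(\rho_kx',\rho_kx_n-k)$ with $\rho_k=\mathrm{diam}\,\Omega^c_k(u)$. Its limit is $x_n$-independent with compact coincidence set in $\mathbb{R}^{n-1}$. Comparing slices with level sets of $\frac12x'^TA_{n-1}x'$ cannot work. The blow-down sees the slices only at scale $\rho$, where they have diameter $o(\rho)$ and shrink to a point. Uniqueness of that blow-down, by contrast, needs no monotonicity: Evans--Krylov on $B_\rho(\rho e_n)$ shows that $D^2u(\rho e_n)$ converges.

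The paper simply reruns Sections 4--5. This is legitimate because those sections use only the linearized equation for directional derivatives, Harnack and Hopf, free boundary regularity, and convexity of $u$. You should inherit them explicitly and verify this, rather than substitute your sketch. That check should also flag that these sections invoke Theorem \ref{thm:5}, whose proof uses convexity of $F$: in Lemma \ref{lemma 3.3}(ii), in Step 3.1, and in the Eberle--Yu citation for $n\ge 3$. The paper's sketch passes over this as well.
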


\textbf{Notations}

We introduce the following notations which will be frequently used in this paper.

(1) $x=(x',x_n)=(x_1, x_2,\cdots,x_{n-1}, x_n)\in \mathbb{R}^n$.

(2) $B_{r}(x_0)=\{x\in\mathbb{R}^{n}:|x-x_0|<r\}$, and $B_{r}=B_{r}(0)$.

  $\qquad B'_{r}(x'_0)=\{x'\in\mathbb{R}^{n-1}:|x'-x'_0|<r\}$, and $B'_{r}=B'_{r}(0)$.

$\qquad\mathcal{C}_{r,l}(x'_0,x^n_0)=B'_{r}(x'_0)\times (x^n_0-l,x^n_0+l).$

(3) $\mathcal{S}_{n\times n}$ is the space of real $n\times n$ symmetric matrices. $I_{n}$ is the identity matrix. Given $N\in\mathcal{S}_{n\times n}$,
we say $N\geq(>) 0$ whenever $N$ is a nonnegative (positive) definite symmetric matrix.

(4) $F$ is uniformly elliptic if there exist constants $0<\lambda\leq\Lambda<+\infty$ such that
\begin{equation*}
\lambda||N||\leq F(A + N)-F(A)\leq \Lambda ||N||
\end{equation*}
for all $A, N\in\mathcal{S}_{n\times n}$ with $N\geq 0$.

(5) Let $f$ be a continuous function in $\Omega$. We say that $u\in S(\lambda, \Lambda, f)$ if $u$ is a continuous function in $\Omega$, such that
\begin{equation*}
\mathcal{M}^+(D^2u(x),\lambda, \Lambda)\geq f(x),\qquad\mathcal{M}^-(D^2u(x),\lambda, \Lambda)\leq f(x),
\end{equation*}
in the viscosity sense in $\Omega$, where $\mathcal{M}^+,\mathcal{M}^-$ denote the Pucci extremal operators.

(6) $\{e_i\}_{i=1}^{n}$ are the standard basis of $\mathbb{R}^n$, i.e., $e_i=(0,\cdots,0, \overset{i\text{-th}}{1},0,\cdots,0).$

(7) Let $e,\tau\in\mathbb{R}^n$ be two vectors. We denote by $\angle\vartheta(e,\tau)$ the angle formed by them.

(8) Let $P$ be a point in $\mathbb{R}^n$. We define a cone $V_P(e,\theta)$ as follows:
\begin{equation*}
V_P(e,\theta):=\{x\in\mathbb{R}^n: 0\leq\angle\vartheta(e,x-P)<\theta\},\quad\forall e\in\partial B_1,\quad\forall\theta\in(0,\pi].
\end{equation*}

(9) Let $x_0\in\mathbb{R}^n$. Given $r>0$, we define the rescaled function $u_{r}$ by
\begin{equation*}
u_{r}(x,x_0):=r^{-2}u(rx+x_0).
\end{equation*}
Moreover, we omit $x_0$ provided $x_0=0$.

(10) Given a measurable set $E\subseteq\mathbb{R}^{n}$, $|E|$ denotes its Lebesgue measure.

(11) Let $E\subseteq\mathbb{R}^{n}$ be a measurable set. We denote by $\mathrm{MD}(E)$ the minimal distance between two parallel hyperplanes containing $E.$

(12) Fixing $r>0$, we define the thickness of the set $\{u=0\}$ in $B_r(x_0)$ as follows:
\begin{equation}\label{E2.5}
\delta_r(u,x_0):=\frac{\mathrm{MD}(\{u=0\}\cap B_r(x_0))}{r}.
\end{equation}
We note the scaling invariance property $\delta_1(u_{r}(x,x_0),0)=\delta_r(u,x_0)$.

The paper is organized as follows. In Section 2, we collect some useful preliminaries for fully nonlinear obstacle problems. Section 3 is devoted to the proof of Theorem 1.3. In Section 4, we prove that the coincidence set $\Omega^{c}(u)$ is slender whenever $u\in\mathcal{G}_{\infty}$. We complete the proof of Theorem 1.2 in Section 5. Section 6 contains a sketch of the proof of Theorem 1.5.

\section{Preliminaries}

In this section, we introduce several properties of the fully nonlinear obstacle problem. In Proposition \ref{pro:2.11}, the inclusions (1)-(2) are taken from \cite{Lee01,Lee02}.
 We refer to \cite{FS01} or \cite{Lee02} for the interior $C^1$ regularity of the free boundary, and to \cite{RT} for the interior $C^{1,\alpha}$ regularity.
 We say that $u$ is a viscosity solution to the fully nonlinear obstacle problem in $B_1$ if we replace $\mathbb{R}^{n}$ with $B_1$ in (\ref{E1.1}).

\begin{pro}
 \label{pro:2.11}
 Let the operator $F$ be convex, and let $u$ be a viscosity solution to the fully nonlinear obstacle problem in $B_1$. Then the following results hold.

 (1) Nondegeneracy. If $x\in\overline{\Omega(u)}$ and $B_{r}(x)\subset\subset B_1$, then
\begin{equation}\label{E2.15}
\sup\limits_{y\in \partial B_{r}(x)}(u(y)-u(x))\geq C(n,\Lambda)r^2;
\end{equation}

(2) Interior $C^{1,1}$ estimates. If $u(0)=0$, then
\begin{equation}\label{E2.12}
\|D^{2}u\|_{L^{\infty}(B_{1/2})}\leq C(n,\lambda, \Lambda);
\end{equation}

(3) Local  $C^1$ regularity and local $C^{1,\alpha}$ regularity of $\Gamma(u)$. Suppose there exists a constant $\epsilon_0>0$ such that
\begin{equation}\label{E2.20}
\delta_r(u,y)\geq\epsilon_0\qquad \forall y\in B_{1/2}(0)\cap \Gamma(u),\qquad \forall r<1/4.
\end{equation}
Then $B_{1/4}(0)\cap\Gamma(u)$ is a $C^1$ manifold.
Furthermore, if the free boundary $\Gamma(u)\cap B_{1/2}$ is a $C^{1}$ manifold,
then $\Gamma(u)\cap B_{1/4}$ is a $C^{1,\alpha}$ manifold, where $\alpha\in(0,1)$ is a universal constant.
\end{pro}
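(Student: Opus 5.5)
The three parts are independent, and I would prove them in the order stated. Throughout, the key fact is that $u$ is a viscosity solution of $F(D^2u)=\chi_{\{u>0\}}$ in $B_1$. Since $F(0)=0$, uniform ellipticity then places $u$ in $S(\lambda,\Lambda,f)$ with $0\le f\le 1$. Convexity of $F$ is needed only for second-order regularity, namely Evans--Krylov and the sign of the equation satisfied by second derivatives.

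\emph{Nondegeneracy.} Fix $x\in\Omega(u)$ (the case $x\in\overline{\Omega(u)}$ follows by continuity) and set
\begin{equation*}
w(y)=u(y)-u(x)-c|y-x|^2,\qquad c=\frac{1}{4n\Lambda}.
\end{equation*}
In $\Omega(u)$ we have $F(D^2u)=1$. Hence $\mathcal{M}^+(D^2u)\ge 1$, and therefore
\begin{equation*}
\mathcal{M}^+(D^2w)\ge 1-2cn\Lambda=\tfrac12>0
\end{equation*}
in the viscosity sense in $\Omega(u)\cap B_r(x)$. By the maximum principle, $w$ has no interior maximum in $\Omega(u)\cap B_r(x)$. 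Since $w(x)=0$ and $w\le -u(x)-c|y-x|^2<0$ on $\partial\Omega(u)$, the supremum of $w$ on $\partial(\Omega(u)\cap B_r(x))$ must be nonnegative and attained on $\partial B_r(x)$. This gives (\ref{E2.15}) with $C=c$.

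\emph{$C^{1,1}$ estimate.} First I would show quadratic growth: $\sup_{B_r(y)}u\le Cr^2$ for every $y\in\Gamma(u)\cap B_{1/2}$ and $r<1/4$. This follows from the Harnack inequality for the class $S(\lambda,\Lambda,f)$ with $\|f\|_\infty\le1$, applied to the nonnegative function $u$, which gives
\begin{equation*}
\sup_{B_r(y)}u\le C\big(\inf_{B_r(y)}u+r^2\big)=Cr^2.
\end{equation*}
Next take $z\in\Omega(u)\cap B_{1/2}$ and let $d=\mathrm{dist}(z,\Gamma(u))$. If $d$ is small, then $u\le Cd^2$ on $B_d(z)$ by the growth bound. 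The rescaling $d^{-2}u(z+d\,\cdot)$ solves $F=1$ in $B_1$ and is bounded. Evans--Krylov, which uses convexity of $F$, then gives $|D^2u(z)|\le C$ after scaling back. If $d$ is large, interior estimates apply directly, using $u(0)=0$ to bound $u$ on $B_{3/4}$. On $\mathrm{Int}\{u=0\}$ we have $D^2u=0$, and $|\Gamma(u)|=0$ by nondegeneracy. Together these give (\ref{E2.12}).

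\emph{Free boundary regularity.} I expect this to be the main obstacle, and I would follow Caffarelli's method.

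1. Using (1), (2) and the thickness assumption (\ref{E2.20}), every blow-up $u_r(\cdot,y)$ at $y\in\Gamma(u)\cap B_{1/2}$ converges in $C^{1,\alpha}_{loc}$ to a global solution whose coincidence set has positive thickness. Classifying such homogeneous blow-ups (via convexity of global solutions and a Weiss/monotonicity-type argument) shows that each blow-up is a half-space solution $\frac{\gamma}{2}(x\cdot\nu)_+^2$. Compactness makes this convergence uniform in $y$.

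2. Because $F$ is convex, the derivatives $\partial_e u$ are solutions of the linearized equation in $\Omega(u)$. Using the half-space approximation, Caffarelli's lemma shows that $C\partial_e u-u\ge0$ near $y$ for all directions $e$ in a cone around $\nu$. The argument is a comparison on $\Omega(u)\cap B_\rho$ against $u$ itself, using nondegeneracy to rule out a negative minimum.

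3. This monotonicity makes $\Gamma(u)$ a Lipschitz graph near $y$. The opening of the cone tends to $\pi/2$ as $r\to0$, uniformly in $y$, so $\Gamma(u)\cap B_{1/4}$ is a $C^1$ manifold.

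4. For $C^{1,\alpha}$, I would apply the boundary Harnack principle for the linearized uniformly elliptic operator in the resulting Lipschitz domain $\Omega(u)$ (as in \cite{RT}). It gives that the ratio $\partial_eu/\partial_\nu u$ is $C^\alpha$ up to $\Gamma(u)$. The level sets of $u$ approximate $\Gamma(u)$, so the normal, which is proportional to the gradient of these ratios, is $C^\alpha$.

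The delicate points are the uniformity of the blow-up classification in Step 1 and the fact that the linearized coefficients are only measurable, since $D^2u$ is merely bounded; the latter is why a boundary Harnack principle valid for nondivergence operators with measurable coefficients is required.
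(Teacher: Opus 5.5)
The paper does not prove this proposition. It quotes (1)--(2) from Lee and Lee--Shahgholian, the $C^1$ part from Figalli--Shahgholian and Lee--Shahgholian, and the $C^{1,\alpha}$ part from Ros-Oton--Torres-Latorre. Your arguments for (1) and (2) are the standard ones from those sources and are fine. In (2), the $L^\infty$ bound on $D^2u$ does not really come from $|\Gamma(u)|=0$. It comes from the fact that $\nabla u$ is continuous, vanishes on all of $\{u=0\}$, and is $C$-Lipschitz along segments inside $\Omega(u)$; splitting any segment at its first and last zero then gives a global Lipschitz bound.

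\textbf{Main gap: Step 1 of (3).} There is no Weiss-type (or any) monotonicity formula for a non-variational operator $F$. This is exactly the known difficulty in the fully nonlinear obstacle problem. So you cannot conclude that blow-ups are homogeneous, and a classification of homogeneous blow-ups does not reach them.

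Homogeneity is also unnecessary. Uniform limits can only enlarge coincidence sets: if $x_k\in\{u_{r_k}(\cdot,y_k)=0\}$ and $x_k\to x$, then $u_0(x)=0$. The minimal width $\mathrm{MD}$ is continuous under Hausdorff convergence. Hence the uniform hypothesis (\ref{E2.20}) gives $\delta_R(u_0,0)\ge\epsilon_0$ for every $R>0$. Proposition \ref{pro:2.12}(2) (Lee--Shahgholian) then shows directly that $u_0$ is a half-space solution, using convexity of global solutions rather than homogeneity. Your compactness argument for uniformity in $y$ goes through unchanged with this replacement.

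\textbf{Second missing ingredient: Step 2 of (3).} The identity $L(\partial_e u)=0$ for $L=F_{ij}(D^2u)\partial_{ij}$ holds for any $C^1$ operator. What Caffarelli's lemma actually needs is that $w=C\partial_eu-u+c|x-x_0|^2$ is an $L$-supersolution. That requires $Lu\ge 1$ in $\Omega(u)$, which is the tangent-plane inequality $0=F(0)\ge F(M)-F_{ij}(M)M_{ij}$ for convex $F$. This is where convexity of $F$ enters.

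The nondegeneracy of part (1) cannot replace this. It is a Pucci inequality, and Pucci sub- and supersolution inequalities for $\partial_eu$ and $u$ cannot be added to control $C\partial_eu-u$. Compare Lemma \ref{le:5.2}: for concave $F$ the paper needs convexity of $u$ precisely to get $a^{ij}\partial_{ij}u\ge\lambda/\Lambda$.

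\textbf{Minor points: Step 4.} The normal to $\{u=t\}$ is determined by the ratios $\partial_iu/\partial_nu$ themselves, not by their gradients. Uniform $C^\alpha$ bounds on these ratios up to $\Gamma(u)$ are what yield $C^{1,\alpha}$.

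The second hypothesis of (3) assumes only that $\Gamma(u)\cap B_{1/2}$ is $C^1$. It must first be brought into the monotonicity-cone setting. For example, when $\Omega(u)$ is locally a $C^1$ domain, the coincidence set is locally uniformly thick, and Steps 1--3 apply.
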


The next proposition is due to Lee-Shahgholian \cite{Lee02}. It shows that if a global solution has a "thick coincidence set", then it must be a half-space solution.

\begin{pro}
 \label{pro:2.12}
Let the operator $F$ be convex (or concave when the spatial dimension $n=2$), and let $u$ be a global solution to the fully nonlinear obstacle problem (\ref{E1.1}).
Then the following results hold.

(1) $u$ is convex.

 (2) Suppose there exists a constant $\epsilon_0>0$ such that
\begin{equation}\label{E1.15}
\delta_r(u, 0)>\epsilon_0>0\qquad \forall r>0,
\end{equation}
then $u$ is a half-space solution; that is, $u$ takes the form
\begin{equation}\label{E1.20}
u(x)=\frac{\gamma}{2}\big[(x, e)_{+}\big]^2,
\end{equation}
for some unit vector $e$, where $\gamma\in [1/\Lambda, 1/\lambda]$ satisfies $F(\gamma e\otimes e)=1.$
\end{pro}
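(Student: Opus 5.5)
The plan has two parts: convexity of $u$, then the half-space classification under the uniform thickness condition (\ref{E1.15}).

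\emph{Part (1): convexity.} For a unit vector $e$ I will show that the second difference quotient
\[
\delta^2_{e,h}u(x)=\frac{u(x+he)+u(x-he)-2u(x)}{h^2}
\]
is bounded below by a quantity tending to $0$ as $h\to0$.
\begin{itemize}
\item By Proposition \ref{pro:2.11}(2), rescaled around free boundary points, $D^2u$ is globally bounded, since $u$ has quadratic growth away from its zero set. Hence $\delta^2_{e,h}u$ is bounded.
\item On $\Omega^c(u)$ we have $u=0$ and $u\ge0$, so $\delta^2_{e,h}u\ge0$ there.
\item In $\{u>0\}$, convexity of $F$ gives, in the viscosity sense, that $\delta^2_{e,h}u$ is a supersolution of a linearized uniformly elliptic inequality. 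The reason is that $F(D^2u(x\pm he))\le1=F(D^2u(x))$ and $F$ is convex.
\item To pass from local to global, I use the blow-down: rescalings $u_r$ converge to a global solution, and the negative infimum of $\delta^2_{e,h}u$ is invariant under rescaling.
\item If the infimum is negative, I take a minimizing sequence, translate and rescale, and extract a limit where the infimum is attained at an interior point of the positivity set. The strong maximum principle then forces it to be constant. This contradicts the nonnegativity on the coincidence set, or the linear growth at infinity when the coincidence set escapes.
\end{itemize}
For $n=2$ with $F$ concave, I will instead use that $\partial_{ee}u$ solves a linear equation in two variables. Two-dimensional theory (Bernstein type) lets the same argument go through for the pure second derivatives.

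\emph{Part (2): half-space classification.} I perform a blow-down.
\begin{itemize}
\item Since $0\in\Omega^c(u)$, quadratic growth bounds $u_r$ in $C^{1,1}_{loc}$. Along a subsequence $u_r\to u_\infty$, again a convex global solution.
\item Coincidence sets converge in Hausdorff distance by nondegeneracy, Proposition \ref{pro:2.11}(1). So the scale invariance $\delta_1(u_r,0)=\delta_r(u,0)>\epsilon_0$ passes to the limit: $\Omega^c(u_\infty)$ is a convex set containing $0$ that is thick at every scale.
\item $u_\infty$ is homogeneous of degree two, via a Weiss-type monotonicity or by the convexity argument for blow-downs. Its coincidence set is then a convex cone with nonempty interior.
\item A homogeneous convex solution whose zero set is a convex cone with nonempty interior must be a half-space solution. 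The steps are: for $e$ in the interior of the dual cone, $\partial_eu_\infty\ge0$; these derivatives solve the linearized equation; a Liouville/Harnack argument forces the cone to be a half-space.
\end{itemize}

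\emph{Passing back to $u$.} Knowing every blow-down is $\frac{\gamma}{2}[(x\cdot e)_+]^2$, the thickness condition gives uniform flatness of $\Gamma(u)$ at all large scales. I then apply Proposition \ref{pro:2.11}(3) uniformly on rescaled balls, and use directional monotonicity: $\partial_\tau u\ge0$ for all $\tau$ in a cone around $e$ that widens to a half-sphere as $r\to\infty$. Letting $r\to\infty$ gives $\partial_\tau u\ge0$ for all $\tau\cdot e>0$. So $u$ depends only on $x\cdot e$, and the ODE $F(\gamma e\otimes e)=1$ on $\{x\cdot e>0\}$ yields (\ref{E1.20}).

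The main obstacle will be the convexity step for a merely convex, non-smooth $F$ in the unbounded setting. The difficulty is justifying the supersolution property of $\delta^2_{e,h}u$ across the free boundary and running the compactness argument carefully.
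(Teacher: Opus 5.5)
The paper does not prove this proposition. It is quoted from Lee--Shahgholian, and the sketch in the last section only says that the half-space conclusion follows ``using the convexity of $u$''. Measured against that route, your Part (1) for convex $F$ is Caffarelli's argument and is sound in substance.

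\emph{Part (2) has a genuine gap.} Everything after your second bullet rests on the blow-down $u_\infty$ being homogeneous of degree two, and neither justification you offer is available.
\begin{itemize}
\item No Weiss-type monotonicity formula is available for fully nonlinear $F$; Weiss's functional is built on the Dirichlet energy of the Laplacian.
\item Convexity does not force homogeneous blow-downs. The one-variable function $x^2\bigl(2+\epsilon\sin\log|x|\bigr)$ is convex for small $\epsilon$, yet $r^{-2}u(rx)$ has non-homogeneous subsequential limits.
\item The next step, that a homogeneous convex solution with conical coincidence set is a half-space solution, is only asserted. For the Laplacian it is a degree comparison for positive harmonic functions in cones; for $F_{ij}(D^2u_\infty)\partial_{ij}$ you would need a boundary-Harnack substitute.
\end{itemize}
Since your last paragraph presupposes that every blow-down is $\frac{\gamma}{2}[(x\cdot e)_+]^2$, the argument does not close.

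The fix is to skip blow-downs and use convexity on $u$ itself.
\begin{enumerate}
\item Condition (\ref{E1.15}) at small $r$ forces $0\in\Omega^c(u)$. So the convex sets $\Omega^c(u)/r$ decrease, as $r\uparrow\infty$, to the recession cone $\mathcal{K}$ of $\Omega^c(u)$.
\item Minimal width is Lipschitz under Hausdorff distance, so $\mathrm{MD}(\mathcal{K}\cap\overline{B_1})\ge\epsilon_0$ and $\mathcal{K}$ has nonempty interior.
\item Every unit vector of $\mathcal{K}$ lies in $V(u)$. Lemma \ref{le:2.4} then gives $\partial_{-\tau}u\ge0$ on $\mathbb{R}^n$ for all $\tau\in\mathcal{K}$. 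Thus $u$ is monotone in one fixed open cone of directions, and $\Gamma(u)$ is a global Lipschitz graph.
\item Apply Caffarelli's Lipschitz-to-$C^{1,\alpha}$ mechanism to $u_R$ on $B_1$, with $R$-independent constants. This is the improvement of the monotonicity cone via nondegeneracy, Harnack, and a lemma of the type $C\partial_\tau u-u\ge-\epsilon\Rightarrow C\partial_\tau u-u\ge0$, as in Lemma \ref{le:5.2}. For convex $F$ this needs no convexity of $u$, since $F_{ij}(D^2u)\partial_{ij}u\ge F(D^2u)-F(0)=1$; for concave $F$ it uses Part (1). The result is $|\nu(x)-\nu(y)|\le C(|x-y|/R)^{\alpha}$ on $\Gamma(u)\cap B_{R/2}$.
\item Letting $R\to\infty$, $\Gamma(u)$ is a hyperplane and $\Omega^c(u)$ is a half-space. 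Lemma \ref{le:2.4} applied to $\pm\tau$ parallel to $\Gamma(u)$ gives $u=g(x\cdot e)$, whence (\ref{E1.20}).
\end{enumerate}

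\emph{Part (1): the two-dimensional concave case is mis-stated.} $\partial_{ee}u$ does not solve a linear equation. Formally $F_{ij}\partial_{ij}(\partial_{ee}u)=-F_{ij,kl}\,\partial_{ije}u\,\partial_{kle}u\ge0$, so it is a subsolution, which is the wrong sign for a minimum principle. What is true is that $\partial_eu$ solves $F_{ij}(D^2u)\partial_{ij}(\partial_eu)=0$ in two variables. Its complex gradient is therefore quasiregular, and by Stoilow factorization $\partial_{ee}u$ obeys the strong minimum principle in $\{u>0\}$. That is the input you need.

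\emph{Part (1): fix the normalization for convex $F$.}
\begin{itemize}
\item Take $x_k\in\{u>0\}$ with $\partial_{ee}u(x_k)\to-m=\inf_{\{u>0\}}\partial_{ee}u$, and rescale around $x_k$ by $d_k=\mathrm{dist}(x_k,\Gamma(u))$. This is not a blow-down.
\item Work with $\partial_{ee}u$ rather than fixed-$h$ quotients, which are not scale invariant. $\partial_{ee}u$ is classical in $\{u>0\}$ by Evans--Krylov and is a supersolution by your difference-quotient computation.
\item The limit then has $\mathrm{dist}(0,\Gamma(u_0))=1$, so nothing escapes.
\end{itemize}
The contradiction is with $u_0\ge0$. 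Take a line in direction $e$ through the component where $\partial_{ee}u_0\equiv-m$. Either it stays in that component, so $u_0\to-\infty$ along it. Or it leaves at some $z\in\Gamma(u_0)$ where $u_0=|\nabla u_0|=0$, forcing $u_0(z-te)=-\frac{m}{2}t^2<0$. No linear growth is involved. The case $\Gamma(u)=\emptyset$ is handled separately by Liouville.
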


The next remark is a consequence of Proposition \ref{pro:2.11} and Proposition \ref{pro:2.12}.
\begin{re}
\label{re:2.13}
Suppose that all hypotheses of Proposition \ref{pro:2.12} hold. Suppose further that the coincidence set has nonempty interior. Then all free boundary points are regular
(cf.\ \cite{PSU} for the definition of regular points.) Applying Proposition \ref{pro:2.11}, for any $x_0\in \Gamma(u)$, we see that  the free boundary $\Gamma(u)$ is a
$C^{1,\alpha}$ graph near $x_0$.
 \end{re}

 Next we introduce the following comparison principles (cf.\ \cite{EY}).

\begin{pro}
 \label{pro:2.16}
Suppose that $u$ is a global solution to the fully nonlinear obstacle problem (\ref{E1.1}).

 (1) If $v\in C(B_1)$ is a viscosity solution to the fully nonlinear obstacle problem in $B_1$ satisfying
\begin{equation*}
 v\leq u \quad\mbox{on}\quad \partial B_{1},
 \end{equation*}
 then $v\leq u$ in $B_{1}$.

(2) Suppose that $v\in C(\mathbb{R}^{n})$ solves the fully nonlinear problem (\ref{E1.1}). If
\begin{equation*}
\liminf\limits_{|x|\rightarrow +\infty}(u-v)\geq 0,
 \end{equation*}
then $v\leq u$ in $\mathbb{R}^{n}.$
\end{pro}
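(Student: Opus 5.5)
The statement to prove is Proposition \ref{pro:2.16}, the comparison principles. For (1) I would argue by contradiction. Suppose $\max_{\overline{B_1}}(v-u)=m>0$, attained at some $x^*\in B_1$. Consider the open set $U=\{v>u\}\cap B_1$. On $U$ we have $v>u\ge 0$, so $v>0$ there, and therefore $F(D^2v)=1$ in $U$ in the viscosity sense. On the other hand $F(D^2u)\le 1$ in all of $\mathbb{R}^n$, so $u$ is a viscosity supersolution of $F(D^2w)=1$ in $U$. On $\partial U$ we have $v=u$: either at points of $\partial B_1$, where $v\le u$ by hypothesis, or at interior points of $B_1$, where $v=u$ by continuity. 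The comparison principle for uniformly elliptic equations $F(D^2w)=1$ between a viscosity subsolution and a supersolution (Caffarelli--Cabr\'e, Jensen) then yields $v\le u$ in $U$. This contradicts the definition of $U$, so $U=\emptyset$ and $v\le u$ in $B_1$.

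The comparison step needs some care, since neither function is assumed $C^2$. I would handle it in one of two ways. The first is to use that $u-v\in S(\lambda/n,\Lambda,0)$-type classes, namely $\mathcal{M}^+(D^2(v-u))\ge 0$ in $U$ in the viscosity sense, which holds because $F$ is uniformly elliptic; the maximum principle for $\overline{S}$ then gives $v-u\le \max_{\partial U}(v-u)=0$. The second is to use that both $u$ and $v$ are $C^{1,1}_{loc}$ by Proposition \ref{pro:2.11}(2) (after translating free-boundary points, together with interior estimates), so that the inequalities hold pointwise a.e. and the ABP argument applies directly. Either route suffices; the only delicate point is justifying the difference inequality for viscosity solutions, which is standard.

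For (2), given $\varepsilon>0$ I would compare $u$ with a perturbation of $v$. The natural candidate is $v_\varepsilon=(1-\varepsilon)v$ or $v-\varepsilon$. Take $w=(v-\varepsilon)_+$. The difficulty is that $w$ need not be an obstacle solution, since $F(D^2 w)$ equals $1$ only on $\{v>\varepsilon\}$. Instead I would rerun the argument of (1) directly on $U_\varepsilon=\{v-\varepsilon>u\}$. By the liminf hypothesis, $u-v>-\varepsilon$ outside some large ball $B_R$, so $U_\varepsilon\subset B_R$ is bounded. On $U_\varepsilon$ we have $v>u+\varepsilon>0$, hence $F(D^2v)=1\ge F(D^2u)$, while $v-\varepsilon=u$ on $\partial U_\varepsilon$. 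Applying the same maximum principle for the difference on the bounded set $U_\varepsilon$ gives $v-\varepsilon\le u$ there, so $U_\varepsilon$ is empty. Letting $\varepsilon\to0$ gives $v\le u$.

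I expect the only real obstacle to be the rigorous viscosity comparison on the possibly irregular open set $U$. The boundedness of $U$ and the continuity of $u$ and $v$ up to $\partial U$ make it go through, via the uniform ellipticity of $F$.
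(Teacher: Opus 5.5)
Your proof is correct and is the standard argument behind the comparison principle that the paper only cites from Eberle--Yu. On the bounded open set $\{v>u\}\cap B_1$ (respectively $\{v-\varepsilon>u\}$, which is bounded thanks to the $\liminf$ hypothesis) one has $v>0$, hence $F(D^2v)=1\ge F(D^2u)$ there, and the viscosity comparison principle for uniformly elliptic $F$ forces the set to be empty.

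Rely on your first justification of the comparison step, namely $v-u\in\underline{S}(\lambda/n,\Lambda,0)$ by Caffarelli--Cabr\'e. The alternative via the $C^{1,1}$ estimate of Proposition~\ref{pro:2.11} needs convexity of $F$, which Proposition~\ref{pro:2.16} does not assume.
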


\begin{lemma}
 \label{lemma 3.2}
Let $F$ be a convex operator, and let $u, v\in\mathcal{G}_c$. Assume that $u>v>0$ on $\partial B_1$. Assume in addition that $\operatorname{Int}\Omega^{c}(u)\neq\emptyset$. Then
\begin{equation}\label{E3.64}
\Omega^{c}(u)\subseteq\operatorname{Int}\Omega^{c}(v).
\end{equation}
\end{lemma}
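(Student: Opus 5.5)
We use the comparison principle of Proposition \ref{pro:2.16} together with a strong maximum principle argument. The hypothesis $u>v>0$ on $\partial B_1$ puts $\partial B_1$ inside $\{v>0\}$. Since $\Omega^c(v)$ is connected and $\Omega^c(u)$ is convex with nonempty interior (Proposition \ref{pro:2.12}), both coincidence sets lie inside $B_1$. Indeed, a convex set meeting $\{|x|<1\}$ and $\{|x|>1\}$ would meet $\partial B_1$, and we will see below that $\Omega^c(u)$ does meet $B_1$. Outside $B_1$ we compare $u$ and $v$ as global solutions: both are positive and solve $F(D^2\cdot)=1$ in $\mathbb{R}^n\setminus \overline{B_1}$.

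\textbf{Step 1 (global ordering $v\le u$).} First we show $v\le u$ in $\mathbb{R}^n\setminus B_1$. The difference $w=u-v$ satisfies a linear uniformly elliptic equation $a_{ij}D_{ij}w=0$ there, with $a_{ij}=\int_0^1F_{ij}(tD^2u+(1-t)D^2v)\,dt$. We have $w>0$ on $\partial B_1$. At infinity, the asymptotics for $\mathcal{G}_c$ (each element is a quadratic polynomial plus lower order, as in Theorem \ref{thm:5}) would be needed to control the sign of $w$, and that result is not yet available. So instead we argue directly with Proposition \ref{pro:2.16}(1) applied in $B_1$: $v$ solves the obstacle problem in $B_1$ and $v\le u$ on $\partial B_1$, hence $v\le u$ in $B_1$. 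This immediately gives $\Omega^c(u)\cap B_1\subseteq\Omega^c(v)$. The coincidence set $\Omega^c(u)$ does meet $B_1$: otherwise $u>0$ in $B_1$ solves $F=1$ there, while $v\le u$ and $v$ has zeros; these zeros lie in $B_1$ since $v>0$ on $\partial B_1$ and $\Omega^c(v)$ is connected (convexity of $v$). Combined with the convexity remark above, $\Omega^c(u)\subset B_1$, so $\Omega^c(u)\subseteq\Omega^c(v)$.

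\textbf{Step 2 (strict inclusion into the interior).} Suppose $x_0\in\Omega^c(u)\cap\partial\Omega^c(v)$, i.e.\ $x_0\in\Gamma(v)$. Set $w=u-v\ge0$ in $B_1$; on the open set $\{v>0\}\cap B_1$, where $F(D^2v)=1\ge F(D^2u)$, $w$ is a viscosity supersolution of a linear uniformly elliptic equation (using convexity and $C^{1,\alpha_F}$ of $F$, or simply $w\in S$ for Pucci operators: $\mathcal{M}^-(D^2w)\le0$). The open set $U=\{v>0\}\cap B_1$ is connected near $\partial B_1$ (it contains the annular region between $\Omega^c(v)$ and $\partial B_1$, which is connected since $\Omega^c(v)$ is convex). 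Since $w>0$ on $\partial B_1$, the strong maximum principle gives $w>0$ in $U$. Now at $x_0$, $u(x_0)=v(x_0)=0$ and $\nabla u(x_0)=\nabla v(x_0)=0$ (both $C^{1,1}$, minima). By Remark \ref{re:2.13}, $\Gamma(v)$ is $C^{1,\alpha}$ near $x_0$ if $\operatorname{Int}\Omega^c(v)\ne\emptyset$, which holds since $\Omega^c(v)\supseteq\Omega^c(u)$ has interior. Thus $U$ satisfies an interior ball condition at $x_0$ (or we use a $C^{1,\alpha}$-domain Hopf lemma), and Hopf's lemma for the supersolution $w>0$ with $w(x_0)=0$ yields $\partial_\nu w(x_0)>0$ along the inner normal, contradicting $\nabla w(x_0)=0$. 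Hence $\Omega^c(u)\cap\Gamma(v)=\emptyset$, i.e.\ $\Omega^c(u)\subseteq\operatorname{Int}\Omega^c(v)$.

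\textbf{Main obstacle.} The delicate point is the Hopf lemma at a $C^{1,\alpha}$ (not $C^2$) boundary point for a viscosity supersolution of a Pucci inequality. I would first try to obtain an interior ball at $x_0$ from the fact that $U$ is the complement of a convex set, so its boundary is exterior-ball-free but interior-cone-rich. If that fails, I would use the $C^{1,\alpha}$ Hopf lemma for nondivergence operators (Dini-type boundary suffices), or argue via nondegeneracy: compare $w$ with a barrier built from $v$ itself, since $v$ grows quadratically off $\Gamma(v)$ while $w$ has linear growth from positivity.
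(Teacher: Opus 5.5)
Your route is the paper's: Proposition~\ref{pro:2.16}(1) in $B_1$ gives $v\le u$. The strong maximum principle then gives $w=u-v>0$ on $\{v>0\}\cap B_1$. Finally, the Hopf lemma at a point $x_0\in\Omega^c(u)\cap\Gamma(v)$ contradicts $\nabla u(x_0)=\nabla v(x_0)=0$.

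\textbf{The step that fails.} Your argument that $\Omega^c(u)$ meets $B_1$ does not work. Assuming $u>0$ in $B_1$ produces no contradiction, since $v\le u$ is perfectly compatible with $v$ vanishing where $u>0$. The claim that the zeros of $v$ lie in $B_1$ because $\Omega^c(v)$ is connected and $v>0$ on $\partial B_1$ is a non sequitur: a convex set missing $\partial B_1$ may lie entirely outside $\overline{B_1}$.

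\textbf{It cannot be repaired from the stated hypotheses.} Take $F=\Delta$, $n=3$, $v=|x|^2/6$. Let $u=\frac{r^2}{6}+\frac{1}{3r}-\frac12$ for $r=|x-10e_1|\ge 1$ and $u=0$ for $r\le 1$. Then $u\ge 13>1/6=v>0$ on $\partial B_1$ and $\operatorname{Int}\Omega^c(u)\neq\emptyset$. Yet $\Omega^c(u)=\overline{B_1(10e_1)}\not\subseteq\operatorname{Int}\Omega^c(v)=\emptyset$.

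The paper's proof tacitly uses the same missing fact when it writes $\Omega^c(u)\subseteq\Omega^c(v)\subset\subset B_1$. In every application, $B_1$ is replaced by a large $B_R$ containing both coincidence sets. So the correct fix is to add $\Omega^c(u)\subset B_1$ as a hypothesis rather than try to derive it. With that hypothesis, your Step 1 gives $\Omega^c(u)\subseteq\Omega^c(v)$. Convexity of $\Omega^c(v)$ together with $v>0$ on $\partial B_1$ then forces $\Omega^c(v)\subset B_1$, which is exactly what your connectedness argument for $U$ in Step 2 needs.

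\textbf{The interior ball in Step 2.} The inference ``$\Gamma(v)$ is $C^{1,\alpha}$, thus $U$ satisfies an interior ball condition'' is not valid, since $C^{1,\alpha}$ boundaries need not admit interior balls. You do not need it, and this removes your ``main obstacle''. Let $\nu$ be the outer normal of any supporting hyperplane to the convex set $\Omega^c(v)$ at $x_0$. Points $x$ with $|x-x_0-\rho\nu|\le\rho$ satisfy $(x-x_0)\cdot\nu\ge|x-x_0|^2/(2\rho)$. Hence $\overline{B_\rho(x_0+\rho\nu)}$ meets $\Omega^c(v)$ only at $x_0$, and $B_\rho(x_0+\rho\nu)\subset U$ for small $\rho$. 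This is precisely the ball the paper uses; the complement of a convex set is interior-ball-rich, not merely interior-cone-rich. The classical Hopf lemma then gives $w(x_0+t\nu)\ge ct$, contradicting $w\in C^{1,1}$ with $\nabla w(x_0)=0$. Remark~\ref{re:2.13} and Dini-type Hopf lemmas are therefore unnecessary.
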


\begin{proof}
Let us assume by contradiction that (\ref{E3.64}) fails. Using Proposition \ref{pro:2.16}, we deduce that $v\leq u$ in $B_1$. This implies that
$\Omega^{c}(u)\subseteq \Omega^{c}(v)\subset\subset B_1$. Then we may choose $y\in\mathbb{R}^{n}$, such that $y\in\Gamma(u)\cap\Gamma(v)$.
After a translation of coordinates, we set $0\in\Gamma(u)\cap\Gamma(v)$. Then we have
\begin{equation}\label{E3.44}
|\nabla u|(0)=|\nabla v|(0)=0.
\end{equation}
Let $w:=u-v$. Since $F(D^2u)=F(D^2v)$ in $B_1\cap\Omega(v),$ we have
\begin{equation*}
\left\{\begin{split}
\mathcal{M}^-(\lambda/n, \Lambda, D^2w)\leq 0\qquad &\mbox{in}\qquad B_1\cap\Omega(v),\\
w>0\qquad &\mbox{on}\qquad \partial B_1,\\
w\geq 0\qquad &\mbox{on}\qquad \Gamma(v).
\end{split}\right.
\end{equation*}
The strong maximum principle implies
\begin{equation*}
v<u\qquad\mbox{in}\quad B_1\cap\Omega(v).
\end{equation*}

Applying Proposition \ref{pro:2.12}, we find that $u$ and $v$ are convex functions. By the convexity of $\Omega^{c}(v)$, there exists $z\in\Omega(v)$ and $\rho>0$, such that
\begin{equation}\label{E3.47}
B_\rho(z)\subset\Omega(v),\qquad\mbox{and}\quad \partial B_\rho(z)\cap\Omega^{c}(v)=\{0\}.
\end{equation}
Then we have
\begin{equation*}
\left\{\begin{split}
w \in S(\lambda/n, \Lambda, 0)\quad &\mbox{in}\qquad B_\rho(z),\\
w >0\qquad &\mbox{in}\qquad B_\rho(z),\\
w(0) =0.&
\end{split}\right.
\end{equation*}
The Hopf lemma gives
\begin{equation*}
\frac{\partial}{\partial \nu}w(0)>0,
\end{equation*}
where $\nu$ denotes the unit outward normal vector to $\Omega^c(v)$ at the origin (recalling the $C^1$ regularity of the free boundary). However, this contradicts (\ref{E3.44}).
\end{proof}

Let
 \begin{equation}\label{E2.45}
 \mathcal{L}u:=\operatorname{tr}\big(A(x)D^2u\big),
 \end{equation}
 where $\lambda I\leq A(x)\leq \Lambda I$. The next lemma is a direct consequence of the Hopf Lemma.

\begin{lemma}
 \label{le:2.2}
Let $u\in C(\mathcal{C}_{2}^+)$ satisfy
$$
\left\{\begin{split}
\mathcal{L}u&=0 \quad\mbox{in}\quad \mathcal{C}_{2}^+,\\
u&=0\quad\mbox{on}\quad \{x_n=0\}\cap\partial \mathcal{C}_{2}^+,\\
u&=1\quad\mbox{on}\quad \{x_n=1\}\cap\partial \mathcal{C}_{2}^+,\\
u&>0\quad\mbox{in}\quad \mathcal{C}_{2}^+,
\end{split}\right.
$$
where $\mathcal{C}_{2}^+:=\mathcal{C}_{2,2}(0)\cap\{x_n>0\}$ denotes a cylinder; see Notations (2).

Then
\begin{equation}\label{E3.22}
\partial_n u(0)\geq C,
\end{equation}
where $C>0$ is a constant depending only on $\lambda$, $\Lambda$, and the dimension $n$.
\end{lemma}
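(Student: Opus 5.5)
The plan is to reduce the claim to the Hopf lemma together with interior Harnack and boundary Harnack type estimates for the linear operator $\mathcal{L}$. The key point is that the constant must be universal, i.e.\ independent of the particular coefficient matrix $A(x)$. We therefore build an explicit barrier that depends only on $\lambda$, $\Lambda$ and $n$.

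First, use the interior Harnack inequality to get a positive lower bound for $u$ away from the bottom face. Since $u=1$ on $\{x_n=1\}\cap\partial\mathcal{C}_2^+$, the value $u=1$ is attained there, while $u>0$ inside. Consider the subcylinder $B'_1\times(0,1)$: it is bounded below by the flat portion $\{x_n=0\}$, where $u=0$, and above by $\{x_n=1\}$, where $u=1$. The lateral boundary values are not prescribed, but they are only used through the positivity $u>0$ together with the maximum principle. Applying the Harnack inequality on the compact set $K=\overline{B'_{1/2}}\times\{1/2\}$ gives $u\geq c_0$ on $K$, with $c_0$ depending only on $\lambda,\Lambda,n$. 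This uses a Harnack chain from points near the top face, where $u$ is close to $1$ by continuity, or equivalently a boundary Harnack/Carleson estimate near $\{x_n=1\}$.

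Second, construct a Hopf-type barrier in an annular region. Take the ball $B_{1/2}(\tfrac12 e_n)$, which is tangent to $\{x_n=0\}$ at the origin, and set
\begin{equation*}
\phi(x)=e^{-\beta|x-\frac12 e_n|^2}-e^{-\beta/4}.
\end{equation*}
For $\beta=\beta(\lambda,\Lambda,n)$ large we get $\mathcal{L}\phi\geq 0$ in the annulus $\{1/8<|x-\tfrac12e_n|<1/2\}$, and this holds uniformly in $A(x)$ because only the ellipticity bounds enter. On the inner sphere we have $\phi\leq 1$. Since $\{|x-\tfrac12 e_n|\leq 1/8\}$ is compactly contained in $\mathcal{C}_2^+$, Harnack gives $u\geq c_0$ there. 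On the outer sphere we have $\phi=0\leq u$. The comparison principle for $\mathcal{L}$ then yields $u\geq c_0\phi$ in the annulus.

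Third, conclude. Since $u(0)=0=\phi(0)$, the inequality $u\geq c_0\phi$ gives
\begin{equation*}
\partial_n u(0)\geq c_0\,\partial_n\phi(0)=c_0\beta e^{-\beta/4}=:C.
\end{equation*}
If $u$ is not differentiable at $0$, the statement is read with the lower difference quotient $\liminf_{t\to0^+}u(te_n)/t$.

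The main obstacle is the first step: obtaining a uniform lower bound $c_0$ on the inner ball without any control of $u$ on the lateral boundary. I would first try the Harnack chain argument, starting from points near $\{x_n=1\}$ where $u\approx 1$. If continuity up to the top face is not uniform enough for that, I would fall back on a weak Harnack/Carleson-type estimate applied to $1-u$, which is nonnegative near the top face by the maximum principle, provided $u\leq 1$ is available.
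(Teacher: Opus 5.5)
Your proposal is the same route as the paper, which says only that the lemma is ``a direct consequence of the Hopf Lemma''. Steps 2 and 3 are that quantitative Hopf lemma, and your barrier computation, including $\partial_n\phi(0)=\beta e^{-\beta/4}$, is correct. You also rightly read $\{x_n=1\}$ as the top face of the box, as in the later application to $\mathcal{Q}_\rho(P)$. With the literal $\mathcal{C}_{2,2}(0)\cap\{x_n>0\}=B_2'\times(0,2)$, that set is only a lateral $(n-2)$-sphere, and prescribing $u=1$ there gives no uniform bound.

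The gap is in Step 1, which is where all the universality lives.
\begin{itemize}
\item A Harnack chain started from points where $u\geq 1/2$ ``by continuity'' does not produce a universal $c_0$. Continuity only gives some $\delta>0$, depending on $u$ (hence on $A(x)$ and on the unprescribed lateral values), with $u\geq 1/2$ within distance $\delta$ of the top face. Chaining from distance $\delta$ down to $\tfrac12 e_n$ then loses a factor like $\delta^{C}$.
\item Your fallback is closer, but as written it rests on $u\leq 1$. That is not a hypothesis: nothing is prescribed laterally, so $u$ may exceed $1$.
\item A Carleson estimate is also the wrong tool here. It bounds $1-u$ by its value at an interior point rather than making it small.
\end{itemize}

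The repair uses only $u\geq 0$: apply your own barrier a second time, at the top face. With $y_0=\tfrac54 e_n$, put
\begin{equation*}
\psi(x)=\frac{e^{-\beta|x-y_0|^2}-e^{-\beta/4}}{e^{-\beta/16}-e^{-\beta/4}} .
\end{equation*}
\begin{itemize}
\item On $\mathcal{C}_2^+\cap B_{1/2}(y_0)\subset\{|x-y_0|>1/4\}$ one has $\mathcal{L}\psi\geq 0$ once $\beta\geq 8n\Lambda/\lambda$.
\item On the flat part of the boundary of this region, $\psi\leq 1=u$. On its spherical part, $\psi=0\leq u$.
\item Comparison gives $u(\tfrac78 e_n)\geq\psi(\tfrac78 e_n)=:c_1(n,\lambda,\Lambda)>0$.
\item A fixed number of Harnack balls of radius $1/16$ along the segment from $\tfrac78 e_n$ to $\tfrac12 e_n$ then yields $u\geq c_0$ on $\overline{B_{1/8}(\tfrac12 e_n)}$.
\end{itemize}
Your Steps 2--3 then go through unchanged.

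Alternatively, apply the boundary H\"older (weak Harnack) estimate to the subsolution $(1-u)^+$. It lies in $[0,1]$ and vanishes on the top face, so this is the correct version of your fallback and needs no bound $u\leq 1$.
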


\section{Proof of Theorem \ref{thm:5}}

Assuming $u\in\mathcal{G}_c$ in $\mathbb{R}^{2}$, the following lemma constructs an asymptotic profile.

\begin{lemma}
 \label{lemma 3.1}
 Suppose all hypotheses of Theorem \ref{thm:1} hold. Suppose further that the spatial dimension $n=2$. If $u\in\mathcal{G}_c$, then there exists a unique function $q\in\mathcal{Q}_c$ such that
\begin{equation}\label{E3.41}
u(x)=q(x)+C+O(|x|^{-1})\qquad\text{as}\quad |x|\rightarrow +\infty,
\end{equation}
where $C$ is a constant.
Moreover, if $q$ is a quadratic polynomial, then $C\le 0$.
\end{lemma}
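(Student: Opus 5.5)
We would follow the strategy used for $n\ge 3$ in \cite{EY}, replacing the Newtonian decay by logarithmic growth. The argument has three steps: identify the quadratic part by blow-down, linearize the equation outside the compact coincidence set, and read off the expansion from a Kelvin-type transform.

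\emph{Step 1: the quadratic part.} Let $u\in\mathcal{G}_c$ with $n=2$. By Proposition \ref{pro:2.12}, $u$ is convex, and by Proposition \ref{pro:2.11}, $D^2u$ is bounded. Hence the blow-downs $u_r$ are uniformly $C^{1,1}$ and converge, along subsequences, to a convex global solution $u_\infty$. Since $\Omega^c(u)$ is compact, $\Omega^c(u_r)$ shrinks to $\{0\}$. So $u_\infty$ solves $F(D^2u_\infty)=1$ in $\mathbb{R}^2\setminus\{0\}$, and by removability of a point for $C^{1,1}$ solutions it solves this in all of $\mathbb{R}^2$. By the Liouville theorem for convex $F$ (Evans--Krylov plus scaling), $u_\infty=\frac12x^TAx$ with $F(A)=1$. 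Positive definiteness of $A$ follows from $u_\infty\ge0$ together with $u_\infty(0)=0$ and nondegeneracy (\ref{E2.15}). Uniqueness of the limit comes from comparison with translated copies, exactly as in \cite{EY}. This limit determines $A$, and also $P$ through (\ref{E1.24}).

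\emph{Step 2: linearization.} Set $w:=u-\frac12x^TAx$ outside a large ball $B_R\supset\Omega^c(u)$. There $w$ has $|D^2w|\to0$ as $|x|\to\infty$ in an averaged sense, by the $C^{1,\alpha}$ convergence of $u_r$. Writing
\[
0=F(D^2u)-F(A)=\operatorname{tr}\big(a(x)D^2w\big),\qquad a(x)=\int_0^1F_{ij}(A+tD^2w)\,dt,
\]
and using (H3), we get $|a(x)-F_{ij}(A)|\le C|D^2w|^{\alpha_F}$. After the linear change of variables $y=P^{-1}x$ we obtain $\Delta w=f$ with $|f|\le C|D^2w|^{1+\alpha_F}$. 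We would bootstrap decay with a Campanato/iteration argument on dyadic annuli: first $w=o(|x|^2)$, then $|D^2w|\le C|x|^{-\beta}$, then improve the exponent until $f=O(|x|^{-2-\epsilon})$. This step is the main obstacle. In two dimensions there is no a priori decay of $w$ (it can grow like $\log$), so the iteration must be run on $D^2w$ rather than on $w$, using interior $C^{2,\alpha}$ estimates for $F$ in annuli.

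\emph{Step 3: expansion and the constant.} Once $\Delta w=f$ with $f$ decaying faster than $|y|^{-2}$, we solve with the logarithmic kernel. This gives
\[
w=b\cdot y+D\log|y|+C+O(|y|^{-1}).
\]
Any harmonic polynomial part beyond the linear term is excluded by the bound $w=o(|x|^2)$ combined with the uniqueness of $A$. We absorb the linear term into a shift $x_0$, whose quadratic correction is a constant, and set $q=\frac12(x-x_0)^TA(x-x_0)+D\log|P(x-x_0)|$. To show $D\le0$, compare $u$ with the entire solution $\frac12(x-x_0)^TA(x-x_0)$ using Proposition \ref{pro:2.16}(2). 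Since $u\ge0$ and $u$ vanishes somewhere, if $D>0$ then $u$ would eventually exceed that entire solution, and comparison would force $u\ge$ the quadratic everywhere. This contradicts $u=0<$ the quadratic at interior zero points, unless $\Omega^c(u)$ is a point, which is case (i) with $D=0$. Uniqueness of $q$ holds because two such expansions differ by $o(1)+C$, which forces equal $A$, $x_0$ and $D$. Finally, if $D=0$, the same comparison argument shows $u\le\frac12(x-x_0)^TA(x-x_0)$, hence $C\le0$.
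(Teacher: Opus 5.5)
\paragraph{Steps 1--2: the expansion itself.} Your Steps 1--2 and the first half of Step 3 re-derive the two-dimensional exterior asymptotics for $F(D^2u)=1$. The paper does not prove this. After placing $\Omega^c(u)\subseteq B_1$, it imports $u=\frac12x^TAx+Bx+C+D\log|Px|+O(|x|^{-1})$ directly from \cite[Theorem 1.1]{LL}. As a self-contained argument, your sketch leaves the hard points open:
\begin{itemize}
\item Uniqueness of the blow-down, which you need even to say $D^2w\to0$, is only asserted.
\item The mechanism giving a first algebraic rate $|D^2w|\le C|x|^{-\beta}$ from mere $D^2w\to0$ is missing, as you acknowledge.
\item Decay of $f$ faster than $|y|^{-2}$ does not give an $O(|y|^{-1})$ remainder. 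With $f=O(|y|^{-2-\epsilon})$, $0<\epsilon<1$, the logarithmic potential of $f$ differs from $\frac{1}{2\pi}(\int f)\log|y|$ by a term that is in general only of order $|y|^{-\epsilon}$. An $O(|y|^{-1})$ remainder needs $\int|y|\,|f|\,dy<\infty$, which your bootstrap does not provide.
\end{itemize}
You should either cite the exterior theorem or actually supply these steps.

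\paragraph{Positivity of $A$.} Nondegeneracy does not give $A>0$. The function $\frac{\gamma}{2}(x\cdot e)^2$ with $F(\gamma e\otimes e)=1$ is nonnegative, vanishes at $0$, satisfies (\ref{E2.15}), and has singular Hessian. The blow-down is blind to compactness of $\Omega^c(u)$, because $\{u_\infty=0\}$ can be strictly larger than the limit of $\Omega^c(u_r)$. In the paper, $A>0$ is proved last, from the expansion:
\begin{itemize}
\item Along a direction $e\in\ker A$, $u\ge0$ forces the linear term to vanish, so $u(x_0+te)=D\log|tPe|+C+O(|t|^{-1})$.
\item Once $D\le0$ and ($D=0\Rightarrow C\le0$) are known, this is eventually negative unless $C=D=0$.
\item If $C=D=0$, then $u-Q\to0$ with $Q:=\frac12(x-x_0)^TA(x-x_0)$. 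Proposition \ref{pro:2.16}(2) in both directions gives $u=Q$, whose zero set $x_0+\ker A$ is compact only if $A>0$.
\end{itemize}
Without this, your absorption of the linear term into a shift $x_0$ is also unjustified in the directions of $\ker A$.

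\paragraph{The sign of $C$.} Your proof of $C\le0$ is circular. Proposition \ref{pro:2.16}(2) yields $u\le Q$ only if $\liminf(Q-u)=-C\ge0$. The paper's fix is to compare with $Q+\epsilon$, which solves (\ref{E1.1}) and is strictly positive:
\begin{itemize}
\item If $D>0$, or if $D=0$ and $C>\epsilon>0$, then $\liminf(u-Q-\epsilon)\ge0$.
\item Hence $u\ge Q+\epsilon>0$ everywhere, contradicting $\Omega^c(u)\neq\emptyset$.
\end{itemize}
This also settles $D\le0$ without your detour through the single-point case.
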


\begin{proof}
We divide the proof into two steps.

\indent\textit{Step 1. We define $q$.}

Since $\Omega^{c}(u)$ is compact, we may assume $0\in\Omega^{c}(u)\subseteq B_1$. Then
\begin{equation*}
F(D^2u)-1=0\qquad\mbox{in}\quad\mathbb{R}^{2}\backslash B_1.
\end{equation*}
In view of \cite[Theorem 1.1]{LL}, we deduce that
\begin{equation}\label{E4.6}
u(x)=\frac{1}{2}x^TAx+Bx+C+D\log|P x|+O(|x|^{-1}), \qquad\text{as}\quad |x|\rightarrow +\infty,
\end{equation}
where $A,P\in\mathcal{S}_{2\times 2}$. Recall that $P$ is defined in (\ref{E1.24}).
By the interior $C^{1,1}$ estimate, (\ref{E4.6}) implies that
\begin{equation*}
D^2u(x)\rightarrow A,\qquad\text{as}\quad |x|\rightarrow +\infty.
\end{equation*}
It follows that $F(A)=1$.

Since $u\geq 0$, we see that $A\geq 0$. After a suitable rotation of coordinates, we have
\begin{equation*}
u(x)=(a_1x_1^2+a_2x_2^2)+(b_1x_1+b_2x_2)+C+D\log|P x|+O(|x|^{-1}),\qquad\text{as}\quad|x|\rightarrow +\infty.
\end{equation*}
It is readily seen that $a_i\geq 0$ for $i=1,2$. In fact, combining $u\geq 0$ and (\ref{E4.6}), we see that if $a_i=0$, then $b_i=0$. In summary, we obtain
\begin{equation}\label{E3.42}
u(x)=\frac{1}{2}(x-x_0)^TA(x-x_0)+D\log|P (x-x_0)|+C+O(|x|^{-1}),\qquad\text{as}\quad|x|\rightarrow +\infty.
\end{equation}
We define
\begin{equation}\label{E4.55}
q(x):=\frac{1}{2}(x-x_0)^TA(x-x_0)+D\log|P (x-x_0)|.
\end{equation}
This yields (\ref{E3.41}). Uniqueness of $q$ is clear.

\indent\textit{Step 2. $q(x)\in\mathcal{Q}_c$.}

We prove $D\leq 0$ in (\ref{E4.55}). Suppose for contradiction that $D>0$. For $\epsilon>0$, consider the auxiliary function $v(x)$ defined by
\begin{equation*}
v(x):=\frac{1}{2}(x-x_0)^TA(x-x_0)+\epsilon.
\end{equation*}
For sufficiently large $R\gg1$ and small $\epsilon>0$, the condition $D>0$ implies $u\geq v$ on $\partial B_R(x_0)$. Using
\begin{equation*}
F(D^2u)\leq 1=F(D^2v),\qquad \mbox{in}\qquad B_R(x_0),
\end{equation*}
Proposition \ref{pro:2.16} then gives $u\geq v>0$ in $B_R(x_0)$. Hence $\Omega^c(u)=\emptyset$, which is a contradiction.

Before establishing positivity of $A$, we first show that if $D=0$ in (\ref{E3.42}), then $C\leq 0$. Suppose for contradiction that $D=0$ and $C>0$.
Choose $R\gg1$ and small $\epsilon>0$. Then $u\geq v>0$ on $\partial B_R(x_0)$. Using Proposition \ref{pro:2.16} again, then $u\geq v>0$ in $B_R(x_0)$, yielding a contradiction.

We now verify $A>0$. If $D^2+C^2>0$, then nonnegativity $u\ge 0$ implies
\begin{equation*}
\frac{1}{2}(x-x_0)^TA(x-x_0)>0,\qquad\text{as}\quad|x|\rightarrow +\infty.
\end{equation*}
In this case, all eigenvalues of $A$ are positive.
If $C=D=0$, then for $\epsilon>0$, we have
\begin{equation*}
u>\frac{1}{2}(x-x_0)^TA(x-x_0)-\epsilon,\qquad\text{as}\quad|x|\rightarrow +\infty.
\end{equation*}
Applying the comparison principle and letting $\epsilon\rightarrow 0^+$, we obtain $u\geq\frac{1}{2}(x-x_0)^TA(x-x_0)$.
Since $\Omega^{c}(u)\neq\emptyset$, we deduce that $u(x)=\frac{1}{2}(x-x_0)^TA(x-x_0).$ Since $\Omega^{c}(u)$ is bounded, it follows that $A>0$ in this case.
\end{proof}

\begin{lemma}
 \label{lemma 3.3}
 Suppose that all hypotheses of Lemma \ref{lemma 3.1} hold. Suppose further that $q\in\mathcal{Q}_c$ is given by (\ref{E3.41}). Then

(i) $x_0\in \mathrm{Int}\,\Omega^c(u)$ provided $D<0$ (see (\ref{E1.23}));

(ii) $u(x)=q(x)=\frac{1}{2}(x-x_0)^TA(x-x_0)$ provided $D=0$.
\end{lemma}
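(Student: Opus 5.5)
Part (ii) is essentially contained in the last step of the proof of Lemma \ref{lemma 3.1}, so I would dispose of it first. If $D=0$, Lemma \ref{lemma 3.1} gives $C\le 0$ and $u=q+C+O(|x|^{-1})$. If $C<0$ were possible, I would argue as follows. We know $u\ge 0$, and $q\ge 0$ with $q(x_0)=0$. Let $\tilde q=q+C$; it solves $F(D^2\tilde q)=1$. Then $u-(\tilde q-\epsilon)\to \epsilon>0$ at infinity. The function $\max(\tilde q-\epsilon,0)$ is a solution of the obstacle problem (its coincidence set is an ellipsoid), so Proposition \ref{pro:2.16}(2) gives $u\ge \max(q+C,0)$. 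The reverse comparison uses $u$ as the comparison function: $\liminf(q-u)\ge -C\ge0$, but $q$ is not an obstacle solution unless it is nonnegative. Since $q\ge 0$ and $F(D^2q)=1$, $q$ solves (\ref{E1.1}), and Proposition \ref{pro:2.16}(2) (with $u$ replaced by $q$, $v=u$) yields $u\le q$. In the case $C=0$, comparing both ways gives $u=q$, which is the argument already written at the end of Lemma \ref{lemma 3.1}. For $C<0$, I would expect a contradiction from uniqueness of the expansion: $\max(q+C,0)$ is itself a global solution with expansion constant $C$. So I would instead show directly that $u\ge q$. Set $w=q-u\ge 0$; it is bounded and tends to $-C$. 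On $\Omega(u)$, $w$ satisfies a linear uniformly elliptic equation $\mathcal L w=0$ by convexity/linearization. On $\Omega^c(u)$, $w=q$. I expect this case to be excluded by the choice of the constant $C$ in the paper's normalization, where $C$ is determined by $\Phi(u)$. So for (ii) the essential content is: $D=0$ and $C=0$ imply $u=q$, which follows from two-sided comparison with $q\pm\epsilon$ via Proposition \ref{pro:2.16}.

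For (i), with $D<0$, I would show $x_0\in\mathrm{Int}\,\Omega^c(u)$ by comparison with translates of $u$, exploiting the symmetry of $q$ about $x_0$.
\begin{itemize}
\item First, consider the reflected function $\tilde u(x)=u(2x_0-x)$. It is again a global solution, because (\ref{E1.1}) is invariant under $x\mapsto -x$. Its expansion has the same $q$, since $q$ is even about $x_0$, and the same $C$.
\item Next, compare $u$ with $u_\epsilon:=u+\epsilon$-shifted versions. The cleaner route is to compare $u$ with $\tilde u$: both have the same expansion up to $O(|x|^{-1})$, so Proposition \ref{pro:2.16}(2) applied twice gives $u=\tilde u$. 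Thus $\Omega^c(u)$ is symmetric about $x_0$.
\item Since $u$ is convex (Proposition \ref{pro:2.12}), $\Omega^c(u)$ is convex. Being a convex set symmetric about $x_0$, it contains $x_0$.
\end{itemize}

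It remains to get interior points. I would show $\mathrm{Int}\,\Omega^c(u)\ne\emptyset$ when $D<0$. If the interior were empty, $\Omega^c(u)$ would be a convex set of dimension at most $1$: a point or a segment. If it is a point, then $u$ is a solution of $F(D^2u)=1$ in $\mathbb R^2\setminus\{x_0\}$, and $F(D^2u)\le 1$ holds everywhere. A point has zero capacity for uniformly elliptic operators in the relevant sense, so $u$ solves $F(D^2u)=1$ on all of $\mathbb R^2$. It is also convex with quadratic growth, so by Liouville-type results it is a quadratic polynomial, forcing $D=0$, a contradiction. If $\Omega^c(u)$ is a segment, I would use nondegeneracy and $C^{1,1}$ regularity (Proposition \ref{pro:2.11}). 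Blow-ups at an interior point of the segment are global solutions with coincidence set a line, hence thick in one direction. This violates the classification behind Proposition \ref{pro:2.12}, since a line has zero thickness yet $u>0$ off it. The harder approach is to show that such a blow-up has a zero set of Lebesgue measure zero, so it would solve the equation everywhere and be a quadratic polynomial vanishing on a line, i.e.\ $\tfrac{\gamma}{2}(x\cdot e)^2$. One would then need to rule this out through its $\log$ coefficient.

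\textbf{Main obstacle.} I expect the main obstacle to be the segment case, namely connecting $D<0$ to the presence of positive-measure contact. The natural tool is that $D$ equals, up to a positive factor, minus the ``mass'' $\int (1-F(D^2u))$, suitably linearized. Such a mass vanishes whenever $|\Omega^c(u)|=0$, and this is what forces $D=0$ and contradicts $D<0$.
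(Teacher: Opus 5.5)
\textbf{Part (ii) has a genuine gap.} Your part (ii) proves nothing beyond what Lemma~\ref{lemma 3.1} already contains. That lemma gives $C\le 0$ when $D=0$, and it settles the case $C=D=0$ by two-sided comparison. The whole content of (ii) is to exclude $D=0$ with $C<0$, and you leave this case open, deferring to ``the paper's normalization.''

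That deferral is circular. $C$ is the constant term in the expansion of $u$, so it is fixed by $u$ itself. The assertion in Theorem~\ref{thm:5} that $C$ depends only on $\Phi(u)$ is proved using this very lemma (injectivity when $D=0$). Your bounds $\max(q+C,0)\le u\le q$ are consistent with $C<0$, so they yield no contradiction.

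Moreover, $\max(q+C,0)$ is not a global solution for $C<0$. Its gradient jumps across the ellipse $\{q=-C\}$, whereas solutions of (\ref{E1.1}) are $C^{1,1}_{loc}$. It is only a subsolution, and if it were a solution it would be a counterexample to (ii).

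\textbf{The missing idea for (ii) is a flux computation.} Normalize $x_0=0$ and $F_{ij}(A)=\delta_{ij}$ (so $P=I$), and set $w=u-\frac12x^TAx-C$.
\begin{itemize}
\item Convexity of $F$ gives $\triangle w\le F(A+D^2w)-F(A)=0$ in $\Omega(u)$.
\item $D=0$ gives $|\nabla w|=O(|x|^{-2})$.
\item Integrate $\triangle w$ over $B_R\cap\Omega(u)$ and use $\nabla u=0$ on $\Gamma(u)$. The free-boundary contribution is $\int_{\Gamma(u)}Ax\cdot\nu\,ds=(\operatorname{tr}A)\,|\Omega^c(u)|$, so $0\ge O(R^{-1})+(\operatorname{tr}A)\,|\Omega^c(u)|$, hence $|\Omega^c(u)|=0$.
\item Then $u$ is a quadratic polynomial (by the Liouville argument below), so $u=q+C$, and $\min u=0$ forces $C=0$.
\end{itemize}
This is exactly your ``mass'' heuristic, but you applied it in the wrong direction. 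For nonlinear convex $F$ the linearization gives only the inequality $\triangle w\le 0$. So it proves $D=0\Rightarrow|\Omega^c(u)|=0$, which is what (ii) needs. It cannot prove $|\Omega^c(u)|=0\Rightarrow D=0$, which is where you tried to use it.

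\textbf{Part (i): your reflection argument is correct and differs from the paper's route.} Since $q(2x_0-x)=q(x)$, the function $u(2x_0-x)$ is a global solution with the same $q$ and the same $C$. Applying Proposition~\ref{pro:2.16}(2) in both directions gives $u(2x_0-x)=u(x)$. Then $\Omega^c(u)$ is convex and symmetric about $x_0$. For any interior point $y$, the point $2x_0-y$ is also interior, and $x_0$ is their midpoint, so $x_0$ is interior as soon as $\mathrm{Int}\,\Omega^c(u)\neq\emptyset$.

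The paper instead obtains $u(x_0)=0$ by comparison with $\frac12(x-x_0)^TA(x-x_0)$. It then excludes $x_0\in\Gamma(u)$ by comparing $u$ with its rescalings $u_r$ through the Hopf-based Lemma~\ref{lemma 3.2}. Your symmetry route avoids that lemma entirely.

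\textbf{What remains unfinished in (i) is $\mathrm{Int}\,\Omega^c(u)\neq\emptyset$.} The segment case is not a real obstacle, and your blow-up idea would not work anyway. A blow-up at an interior point of a segment may be $\frac{\gamma}{2}(x\cdot e)^2$, which is a legitimate solution, and blow-ups lose the information in $D$.

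The fix is direct. A convex set with empty interior is Lebesgue-null, so $u\in C^{1,1}_{loc}$ satisfies $F(D^2u)=1$ a.e. It is therefore a strong, hence viscosity, solution in all of $\mathbb{R}^2$. Since $D^2u$ is globally bounded, the Evans--Krylov estimate $[D^2u]_{C^\alpha(B_{R/2})}\le CR^{-\alpha}\|D^2u\|_{L^\infty(B_R)}$ forces $u$ to be quadratic, so $D=0$, contradicting $D<0$. This handles the point and segment cases at once, exactly as in the paper.
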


\begin{proof}
We show that (i) holds. Without loss of generality, we assume that $x_0=0$. Suppose on the contrary that $D<0$, and the origin $0\notin\mathrm{Int}\,\Omega^{c}(u)$. We prove
 \begin{equation}\label{E3.62}
 \mathrm{Int}\,\Omega^{c}(u)\neq\emptyset.
 \end{equation}

 In fact, if (\ref{E3.62}) is fails, then we have
 \begin{equation*}
 F(D^2 u)=1,\qquad\mbox{a.e. in}\quad\mathbb{R}^n.
\end{equation*}
 By the interior $C^{1,1}$ estimate, we observe that $u$ is a strong $L^n$ solution in $\mathbb{R}^n$. Under assumptions about the operator $F$, strong $L^n$ solutions
 are also viscosity solutions (cf.\cite{Ca02.5}). Then the Evans-Krylov theorem implies that $u$ is a quadratic polynomial. Together with (\ref{E3.41}) and (\ref{E4.55}),
 we deduce that
 \begin{equation*}
 u(x)=q(x)=\frac{1}{2}x^TAx.
 \end{equation*}
 This contradicts $D<0.$ The claim follows.

By $D<0$, we have $u<\frac{1}{2}x^TAx$ as $|x|\rightarrow +\infty$. Using Proposition \ref{pro:2.16} again, we have
\begin{equation*}
0\leq u\leq\frac{1}{2}x^TAx,\qquad\mbox{in}\quad\mathbb{R}^n.
\end{equation*}
Hence $u(0)=0$. Since $0\notin\mathrm{Int}\,\Omega^{c}(u)$, we have
\begin{equation}\label{E3.43}
0\in\Gamma(u).
\end{equation}
For any  $r\in (0,1)$, it is clear that $u_r\in\mathcal{G}_c$. Then we have
\begin{equation}\label{E3.61}
0\in\Gamma(u)\cap\Gamma(u_r).
\end{equation}
By (\ref{E3.62}), we have $\mathrm{Int}\,\Omega^{c}(u_r)\neq\emptyset.$ Applying (\ref{E3.42}), it is not hard to realize that
\begin{equation}\label{E3.45}
u_r(x)\rightarrow\frac{1}{2}x^TAx+\frac{D}{r^2}\log|P x|+\frac{C+D\log r}{r^2},\quad\text{as}\quad|x|\rightarrow +\infty.
\end{equation}

  Using $D<0$ again, we may choose sufficiently large $R>1$, such that $u_r< u$ on $\partial B_R.$ Applying Lemma \ref{lemma 3.2}, we obtain $\Omega^{c}(u_r)\subseteq\mathrm{Int}\,\Omega^{c}(u)$. This contradicts (\ref{E3.61}).

Next we show that (ii) holds. If $D=0$ , then $C\leq 0$ comes from Lemma \ref{lemma 3.1}. We need to show that $C=0$. Let us assume by contradiction that $C<0$. We claim that
\begin{equation}\label{E3.72}
|\Omega^{c}(u)|>0.
\end{equation}

Otherwise, we deduce that $u$ is a quadratic polynomial. Lemma \ref{lemma 3.1} implies that
\begin{equation*}
u=\frac{1}{2}x^TAx+C.
\end{equation*}
By $C\leq 0$ and $u\geq 0$, we have $C=0$, a contradiction. Thus (\ref{E3.72}) holds.

It suffices to consider the case when
\begin{equation}\label{E3.73}
F_{ij}(A)=\delta_{ij},
\end{equation}
where $F_{ij}(\cdot)$ denotes the derivative of the operator $F$. Hence $P=I$ (see (\ref{E1.24})).

Let
\begin{equation*}
w(x):=u(x)-\frac{1}{2}x^TAx-C,
\end{equation*}
and
\begin{equation}\label{E3.11}
 G(M):=F(M+A)-F(A),\qquad \forall M\in\mathcal{S}_{2\times 2}.
\end{equation}
Then we have
\begin{equation*}
G(D^2 w)=0\qquad\mbox{in}\quad \Omega(u).
\end{equation*}
Using (\ref{E3.41}), we deduce that
\begin{equation*}
|\nabla w|=O(|x|^{-2})\qquad\text{as}\quad |x|\rightarrow +\infty.
\end{equation*}

By the convexity of $F$, we have
\begin{equation}\label{E3.74}
\triangle w \leq F(A+D^2 w)-F(A)=0\qquad\mbox{in}\quad \Omega(u).
\end{equation}
Using the divergence theorem, for some $R>1$ large enough, we have
\begin{equation*}
\begin{split}
0\geq\int_{B_R\cap\Omega(u)}\triangle w&=\int_{\partial B_R}\frac{\partial w}{\partial r}ds-\int_{\Gamma(u)}\frac{\partial w}{\partial\nu}ds\\
&=\int_{\partial B_R}O(|x|^{-2})ds-\int_{\Gamma(u)}\frac{\partial(u-\frac{1}{2}x^TAx-C)}{\partial\nu}ds\\
&=O(R^{-1})+(\mbox{trace}A)|\Omega^{c}(u)|,
\end{split}
\end{equation*}
where $\nu(x)$ denotes the unit outward normal of $\Omega^c(u).$ Taking $R\rightarrow +\infty,$ we deduce that $|\Omega^{c}(u)|=0$. This contradicts (\ref{E3.72}).
\end{proof}

\begin{re}
For $n\geq 3$, if $p(x)=\frac{1}{2}(x-x_0)^TA(x-x_0)+D\in\mathcal{Q}_c$, then we have $u=\Phi^{-1}(p)\in\mathcal{G}_c$.
Repeating the arguments in the previous lemma, we find that either $x_0\in\mathrm{Int}\,\Omega^c(u)$, or $\Omega^c(u)=\{x_0\}.$
 \end{re}

The following lemma is a direct consequence of comparison principles.

\begin{lemma}
 \label{le:2.1}
For $n=2$, suppose that $\mathcal{F}$ is a uniformly elliptic operator. Let $V_R(x)$ satisfy
\begin{equation}\label{E3.51}
\left\{\begin{split}
\mathcal{F}(D^2 V_R)=0\qquad &\text{in}\qquad B_R\backslash B_1,\\
V_R=-\log R  \qquad  &\text{on}\qquad \partial B_R,\\
 V_R\leq 0\qquad &\text{on}\qquad \partial B_1,
\end{split}\right.
\end{equation}
in the viscosity sense, where $R>10$. Then
\begin{equation}\label{E3.52}
V_R\leq -c_0\log R\qquad \mbox{on}\qquad \partial B_{R/2},
\end{equation}
where the universal constant $c_0$ depends only on the ellipticity constants $\lambda$ and $\Lambda$.
\end{lemma}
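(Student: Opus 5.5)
We compare $V_R$ with an explicit barrier built from radial solutions of the Pucci extremal operators in $\mathbb{R}^2$. Since $\mathcal{F}$ is uniformly elliptic, $V_R$ is a viscosity subsolution of the maximal Pucci equation:
\[
\mathcal{M}^+(D^2V_R,\lambda,\Lambda)\geq \mathcal{F}(D^2V_R)-\mathcal{F}(0)=-\mathcal{F}(0).
\]
(We normalize $\mathcal{F}(0)=0$, as in the application where $\mathcal{F}(M)=F(M+A)-F(A)$.) So it suffices to find a supersolution $\Psi$ of $\mathcal{M}^+(D^2\Psi)\le 0$ in $B_R\backslash B_1$ with $\Psi\ge 0$ on $\partial B_1$, $\Psi\ge -\log R$ on $\partial B_R$, and $\Psi\le -c_0\log R$ on $\partial B_{R/2}$. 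The comparison principle for the Pucci operator on the annulus then gives $V_R\le\Psi$, and hence the claim.

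\textbf{Choice of barrier.} We take radial power functions. For $\phi(r)=r^{-\beta}$ with $\beta>0$, the Hessian has eigenvalues $\phi''=\beta(\beta+1)r^{-\beta-2}>0$ (radial) and $\phi'/r=-\beta r^{-\beta-2}<0$ (tangential). Therefore
\[
\mathcal{M}^+(D^2\phi)=r^{-\beta-2}\big(\Lambda\beta(\beta+1)-\lambda\beta\big),
\]
which is $\le 0$ if $\beta\le \lambda/\Lambda-1$. That range is empty unless $\lambda=\Lambda$, so this sign choice fails.

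We instead take $\phi(r)=-r^{\beta}$ with $0<\beta<1$. The eigenvalues are $-\beta(\beta-1)r^{\beta-2}=\beta(1-\beta)r^{\beta-2}>0$ (radial) and $-\beta r^{\beta-2}<0$ (tangential). Then
\[
\mathcal{M}^+(D^2\phi)=\beta r^{\beta-2}\big(\Lambda(1-\beta)-\lambda\big)\le 0
\quad\text{iff}\quad \beta\ge 1-\lambda/\Lambda .
\]
Fix $\beta=1-\lambda/(2\Lambda)\in[1/2,1)$. The function $\phi$ is concave-like in the right sense: it is decreasing, and its growth is governed by $\beta<1$.

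\textbf{Normalizing the barrier.} Set
\[
\Psi(x)=-\log R\,\frac{|x|^\beta-1}{R^\beta-1}.
\]
This is a positive multiple of $\phi$ plus a constant, so $\mathcal{M}^+(D^2\Psi)\le0$. It satisfies $\Psi=0\ge V_R$ on $\partial B_1$ and $\Psi=-\log R=V_R$ on $\partial B_R$. On $\partial B_{R/2}$,
\[
\Psi=-\log R\,\frac{(R/2)^\beta-1}{R^\beta-1}\le -\log R\cdot c(\beta),
\]
where, for $R>10$, we may take $c(\beta)=\tfrac12\big(2^{-\beta}-10^{-\beta}\big)>0$. Since $\beta$ depends only on $\lambda/\Lambda$, so does $c_0:=c(\beta)$.

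\textbf{Main obstacle.} The delicate point is the comparison step: $V_R$ is only a viscosity subsolution of $\mathcal{M}^+\ge0$, while $\Psi$ is smooth with $\mathcal{M}^+(D^2\Psi)\le0$. The comparison principle for the Pucci operator applies in this smooth-supersolution versus viscosity-subsolution setting: at an interior maximum of $V_R-\Psi$, the function $\Psi+$const touches $V_R$ from above. To make this rigorous one uses the strict inequality $\mathcal{M}^+(D^2\Psi)<0$, which holds since $r^{\beta-2}>0$ and $\Lambda(1-\beta)-\lambda=-\lambda/2<0$. This yields a contradiction with the subsolution property, so $V_R\le\Psi$ in the annulus, and (\ref{E3.52}) follows.
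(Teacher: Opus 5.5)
Your argument is correct, but it uses a different barrier from the paper's.

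The paper takes $\beta=\Lambda/\lambda$ and the exterior-ball barrier $\Psi_R(x)=\big((2R-1)^{-\beta}-|x-2Re|^{-\beta}\big)R^{\beta}\log R$, centered at the point $2Re$ outside $B_R$. It compares only on the lens $B_R\cap B_{2R-1}(2Re)$, which misses $B_1$. Since $\Psi_R=0$ on the inner arc of that lens, the paper first needs $\sup V_R\le 0$ from the maximum principle. It then reads off the bound at the single point $Re/2$, with $e\in\partial B_1$ arbitrary.

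You instead use a concentric barrier built from $-|x|^{\beta}$ with a sublinear exponent $\beta=1-\lambda/(2\Lambda)\in[1/2,1)$, normalized to agree with the data on both $\partial B_1$ and $\partial B_R$. This buys three things:
\begin{itemize}
\item a single comparison on the whole annulus suffices;
\item no preliminary maximum-principle bound and no lens geometry are needed;
\item radial symmetry gives the estimate on all of $\partial B_{R/2}$ at once, with the explicit constant $c_0=\tfrac12(2^{-\beta}-10^{-\beta})$.
\end{itemize}

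The individual steps check out:
\begin{itemize}
\item The eigenvalue computation is right.
\item The strict inequality $\mathcal{M}^+(D^2\Psi)<0$ makes the touching argument elementary, with no need for general viscosity comparison.
\item Your choice of $c(\beta)$ is justified, since $\frac{(R/2)^\beta-1}{R^\beta-1}\ge 2^{-\beta}-R^{-\beta}>2^{-\beta}-10^{-\beta}$ for $R>10$.
\item You rightly make explicit the normalization $\mathcal{F}(0)=0$. The statement genuinely needs it, and the paper uses it tacitly; it holds for $\mathcal{F}=G$ in the application.
\end{itemize}

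The paper's exterior-ball device is the more standard and geometry-flexible one; yours is simpler and tailored exactly to the annulus. The exploratory paragraph about $r^{-\beta}$ can simply be dropped.
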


\begin{proof}
 It is clear that $\sup V_R\leq 0$ follows from the maximum principle. Let $\beta=\Lambda/\lambda\geq 1.$ We consider the following auxiliary function:
\begin{equation*}
\Psi_R(x):=\left((2R-1)^{-\beta}-|x-2R e|^{-\beta}\right)R^{\beta}\log R,
\end{equation*}
where $e\in \partial B_1$. Then we have
\begin{equation*}
\left\{\begin{split}
\mathcal{F}(D^2 \Psi_R)\leq\mathcal{M}^+(D^2\Psi_R)\leq 0=\mathcal{F}(D^2 V_R)\quad &\mbox{in}\quad B_R(0)\cap B_{2R-1}(2R e),\\
\Psi_R=0\geq V_R  \qquad  &\mbox{on}\qquad B_R(0)\cap\partial B_{2R-1}(2R e),\\
\Psi_R>-\log R>V_R \qquad &\mbox{on}\qquad \partial B_R(0)\cap B_{2R-1}(2R e).
\end{split}\right.
\end{equation*}
The comparison principle implies that
\begin{equation*}
\begin{split}
V_R(R e/2)&\leq \Psi_R(R e/2)\leq \left((2R-1)^{-\beta} -(3/2)R^{-\beta}\right)R^{\beta}\log R\\
&\leq -c_0(\beta)\log R,
\end{split}
\end{equation*}
as we wanted to see.
\end{proof}

\begin{proof}[\textit{Proof of Theorem \ref{thm:5}}]
We divide the proof into three steps.

\indent\textit{Step 1. Definition of the map $\Phi$}

When the spatial dimension $n\geq 3$, the bijection $\Phi:\mathcal{G}_c\rightarrow\mathcal{Q}_c$ was established by Eberle-Yu
\cite{EY}, it remains to consider $n=2$. Let $u\in\mathcal{G}_c$ in $\mathbb{R}^{2}$. We define $\Phi(u):=q(x),$ where $q(x)\in\mathcal{Q}_c$ is given by Lemma \ref{lemma 3.1}.

\indent\textit{Step 2. Injection of the map $\Phi$}

Assuming $u, v\in\mathcal{G}_c$ with $q(x)=\Phi(u)=\Phi(v),$ we need to show that $u\equiv v$. By a translation of the coordinate system, we assume that
\begin{equation*}
q(x)=\frac{1}{2}x^TAx+D\log|Px|.
\end{equation*}

Indeed, if $D=0$, then $u=q=v$ follows from Lemma \ref{lemma 3.3}. It remains to consider the case that $D<0.$ Applying Lemma \ref{lemma 3.1}, we have
\begin{equation}\label{E3.63}
\left\{\begin{split}
u(x)=q(x)+C_1+O(|x|^{-1}),\\
v(x)=q(x)+C_2+O(|x|^{-1}),
\end{split}
\qquad\mbox{as}\quad |x|\rightarrow +\infty.
\right.
\end{equation}
where $C_1, C_2\in\mathbb{R}$. Applying Proposition \ref{pro:2.16} again, we have $u\equiv v$ provided $C_1=C_2$.

Next we show that $C_1=C_2$ in (\ref{E3.63}). On the contrary, we may assume that $C_1>C_2$. Choosing $R\gg1$, we have $u>v$ on $\partial B_R$. By Lemma \ref{lemma 3.3},
we have $\mathrm{Int}\,\Omega^c(u)\neq\emptyset$, and $\mathrm{Int}\,\Omega^c(v)\neq\emptyset.$ Applying Lemma \ref{lemma 3.2}, we have
$$
\Omega^{c}(u)\subseteq\mathrm{Int}\,\Omega^{c}(v).
$$

Denoting
\begin{equation*}
\delta:=\mbox{dist}( \Gamma(v), \Omega^c(u))>0,\qquad d:=\mbox{diam} (\Omega^c(u))>0,
\end{equation*}
we consider a global solution $u_r\in\mathcal{G}_c,$ where $r=1/(1+\frac{\delta}{2d})<1$. Thus
\begin{equation}\label{E3.68}
\Omega^{c}(u_r)=\left(1+\frac{\delta}{2d}\right) \Omega^{c}(u)\subseteq\mathrm{Int}\,\Omega^{c}(v).
\end{equation}

On the other hand, combining (\ref{E3.45}) and (\ref{E3.63}), we may choose $R>1$ large enough such that $u_r<v$ on $\partial B_R$.
Proposition \ref{pro:2.16} implies that $\Omega^{c}(v)\subseteq\Omega^{c}(u_r).$ This contradicts (\ref{E3.68}).

\indent\textit{Step 3. Surjection of the map $\Phi$}

Let $q(x)\in \mathcal{Q}_c$. We want to construct a global solution $u\in\mathcal{G}_c$ such that
\begin{equation*}
\Phi(u)=q(x).
\end{equation*}
 By a rescaling of $u$, we may assume that $F$ satisfies (\ref{E3.73}), and $P=I$. Hence
 \begin{equation*}
q(x)=\frac{1}{2}x^TAx+D\log|x|.
\end{equation*}
 If the constant $D=0$, then our result holds since $\Phi(q)=q(x).$ It remains to consider the case that $D<0.$

\textit{Step 3.1}  Let
\begin{equation*}
q_1(x)=\frac{1}{2}x^TAx-\log|x|.
\end{equation*}

For each $R>1$, let $u_R$ satisfy the following problem
\begin{equation}\label{E3.84}
\left\{\begin{split}
F(D^2 u_R)\leq 1\quad &\mbox{in}\quad B_R,\\
u_R\geq 0\quad&\mbox{in}\quad B_R,\\
F(D^2 u_R)= 1\quad &\mbox{in}\quad \{u_R>0\},\\
u_R=q_1\quad &\mbox{on}\quad \partial B_R,
\end{split}\right.
\end{equation}

We claim that
\begin{equation}\label{E3.85}
\left\{
\begin{split}
(q_1(x)-1/\lambda)^+ \leq u_R(x)\qquad &\mbox{in}\quad B_R\backslash B_1.\\
0\leq u_R(x)\leq \frac{1}{2}x^TAx \qquad &\mbox{in}\quad B_R.
\end{split}\right.
\end{equation}
In fact, the second inequality follows from Proposition \ref{pro:2.16}. It remains to prove the first inequality.
By convexity of $F$ and (\ref{E3.73}), we deduce that for any $x\neq 0,$
\begin{equation*}
\begin{split}
F(D^2q_1(x))-1&= F\big(A-D^2\log |x|\big)-F(A)\\
&\geq \mbox{trace}\Big(DF(A)\cdot D^2(-\log |x|)\Big)=\triangle(-\log |x|)=0.
\end{split}
\end{equation*}
Then we have
\begin{equation*}\label{E3.87}
\left\{\begin{split}
F(D^2q_1)\geq 1\geq F(D^2 u_R)\quad &\mbox{in}\quad B_R\backslash B_1,\\
q_1-1/\lambda \leq 0 \leq u_R \quad&\mbox{on}\quad \partial B_1,\\
u_R=q_1\quad &\mbox{on}\quad \partial B_R,
\end{split}\right.
\end{equation*}
where we are using $A\leq I_n/\lambda$. Therefore, the first inequality in (\ref{E3.85}) follows.

Together with $A>0$ and (\ref{E3.85}), there exists a universal constant $r_0>0$ such that $\Omega^c(u_R)\subseteq B_{r_0}$ for all $R\gg1$. By a rescaling, we assume that
\begin{equation*}
\Omega^{c}(u_R)\subseteq B_1,\quad\mbox{for all}\quad R\gg1.
\end{equation*}

It is not hard to observe that $\Omega^c(u_R)\neq\emptyset$ for all sufficiently large $R$. In fact, if $\Omega^c(u_R)=\emptyset$, then by (\ref{E3.84}) we deduce that
$u_R(x)=\frac{1}{2}x^TAx-\log R.$ Then we have $u_R(0)<0$, a contradiction.

For all sufficiently large $R$, denoting $v_R(x):=R^{-2}u_R(Rx)$, it is obvious that $v_R$ satisfies the obstacle problem in $B_1$ with
\begin{equation*}
\emptyset\neq\Omega^c(v_R)\subseteq B_{1/4}.
\end{equation*}
Applying the interior $C^{1,1}$ estimate (Proposition \ref{pro:2.11}), we have
\begin{equation*}
||D^2 u_R||_{L^{\infty}(B_{R/2})}=||D^2 v_R||_{L^{\infty}(B_{1/2})}\leq C(n,\lambda, \Lambda).
\end{equation*}
By (\ref{E3.85}), up to a subsequence, we have
\begin{equation}\label{E3.91}
u_R(x)\rightarrow u_\infty(x)\qquad\mbox{in}\quad C^{1,\alpha}_{loc}(\mathbb{R}^{2}),
\end{equation}
as $R\rightarrow +\infty.$ By (\ref{E3.85}), it is easily seen that
\begin{equation}\label{E3.92}
(q_1(x)-1/\lambda)^+ \leq u_\infty(x)\leq \frac{1}{2}x^TAx \qquad \mbox{in}\quad\mathbb{R}^{2}\backslash B_1.
\end{equation}

Thanks to \cite[Proposition 2.9]{Ca02}, we deduce that $u_\infty\in C^{1,1}_{loc}(\mathbb{R}^{2}) $ is a global solution
to the obstacle problem (\ref{E1.1}). Combining $A>0$ and (\ref{E3.92}), we have
\begin{equation}\label{E3.93}
u_\infty\in\mathcal{G}_c.
\end{equation}
Since $F$ satisfies (\ref{E3.73}), applying Lemma \ref{lemma 3.1}, we deduce that
\begin{equation}\label{E3.94}
u_\infty(x)=\frac{1}{2}(x-x_0)^T A_\infty (x-x_0)+D_\infty\log|x-x_0|+C_\infty+O(|x|^{-1})\qquad\mbox{as}\quad |x|\rightarrow +\infty.
\end{equation}
By (\ref{E3.92}), we observe that $A_\infty=A, x_0=0$ and $D_\infty\leq 0$.

\indent\textit{Step 3.2} We claim that $D_\infty<0$. Let us assume by contradiction that $D_\infty=0$. Applying Lemma \ref{lemma 3.3}, we have
 $$
 u_\infty=\Phi(u_\infty)=\frac{1}{2}x^T A x.
 $$
For any $\rho>1$, using (\ref{E3.91}), we have
\begin{equation}\label{E3.95}
u_R(x)-\frac{1}{2}x^T A x=u_R-u_\infty\rightarrow 0, \qquad\mbox{in}\quad C^{1,\alpha}(B_{2\rho}),
\end{equation}
as $R\rightarrow +\infty.$

 Fix $\epsilon>0$, and let
\begin{equation*}
v_R(x):=u_R(x)-\frac{1}{2}x^T A x-\epsilon.
\end{equation*}
Taking $R\gg\rho\gg1$ large enough, then we have
\begin{equation*}\label{E3.96}
\left\{\begin{split}
G(D^2 v_R)=0\qquad &\mbox{in}\qquad B_R\backslash B_\rho,\\
v_R\leq -\log R  \qquad  &\mbox{on}\qquad \partial B_R,\\
v_R \leq 0\qquad &\mbox{on}\qquad \partial B_\rho,
\end{split}\right.
\end{equation*}
where the convex operator $G$ is given by (\ref{E3.11}). Applying Lemma \ref{le:2.1}, we observe that
\begin{equation}\label{E3.98}
v_R\leq -c_0\log R\qquad \mbox{on}\qquad \partial B_{R/2},
\end{equation}
where $c_0$ is a universal constant. Define now
\begin{equation*}
U:=\left(B_{\frac{2}{3}R}\backslash B_\rho\right)\subset \Omega(u_R), \qquad  f_{R,\rho}(x):=\chi(U)(\triangle v_R(x)).
\end{equation*}
By (\ref{E3.85}), using the interior $C^{2}$ estimate again, we deduce that
\begin{equation}\label{E3.97}
|D^2 v_R|(x)\leq C(n,\lambda, \Lambda)(\log |x|)/|x|^2 \qquad\mbox{in}\qquad U.
\end{equation}

We compute
\begin{equation*}\label{E3.35}
\tilde{a}^{ij}(x)\partial_{ij}v_R(x)=F(A+D^2 v_R)-F(A)=0,\quad\mbox{in}\quad \mathbb{R}^{2}\backslash B_\rho,
\end{equation*}
where $\tilde{a}^{ij}(x):=\int\limits_{0}^{1}F_{ij}[A+tD^2 v_R(x)]dt.$

Since $F\in C^{1,\alpha_F}$, it follows that for some $C>0$,
\begin{equation}\label{E3.101}
|\tilde{a}^{ij}(x)-F_{ij}(A)|\leq C(|D^2 v_R|(x))^{\alpha_F}\leq C|x|^{-(5/3)\alpha_F}\quad\mbox{in}\quad U.
\end{equation}
Putting (\ref{E3.97}), (\ref{E3.101}), and (\ref{E3.73}) together,  we have
\begin{equation}\label{E3.102}
|f_{R,\rho}(x)|\leq|\delta_{ij}-\tilde{a}^{ij}(x)||\partial_{ij}v_R(x)|
\leq C|x|^{-2-(4/3)\alpha_F}\qquad\mbox{in}\qquad U.
\end{equation}

Let
\begin{equation*}
w_{R,\rho}(x):=\frac{1}{2\pi}\int\limits_{U}\log|x-y|f_{R,\rho}(y)dy,\quad x\in\mathbb{R}^{2}.
\end{equation*}
We claim that
\begin{equation}\label{E3.103}
\left\{\begin{split}
\triangle (v_R-w_{R,\rho})(x)=0 \qquad &\mbox{in}\qquad B_{R/2}\backslash B_\rho,\\
v_R-w_{R,\rho}\leq -\frac{c_{0}}{2}\log R  \qquad  &\mbox{on}\qquad \partial B_{R/2},\\
v_R-w_{R,\rho}\leq \rho^{-\alpha_F}\qquad &\mbox{on}\qquad \partial B_\rho,
\end{split}\right.
\end{equation}
 where the constant $c_0$ is given by (\ref{E3.98}).

Indeed, (\ref{E3.103}) can be seen as follows. For any $e\in \partial B_1$, we can compute separately
\begin{equation}\label{E3.104}
 |w_{R,\rho}(Re/2)|\leq \left(\int\limits_{U\backslash B_1(Re/2)}+\int\limits_{B_1(Re/2)\cap U}\right)|f(y)|\log|Re/2-y|dy:=I_1+I_2.
\end{equation}
 By (\ref{E3.102}), we have
\begin{equation*}
I_1\leq C\log R\int\limits_{U}|y|^{-2-\alpha_F}dy\leq C\rho^{-\alpha_F}\log R,
\end{equation*}
 and
\begin{equation*}
I_2\leq CR^{-2-\alpha_F}\int\limits_{B_1(Re/2)}\log|Re/2-y|dy\leq R^{-2}.
\end{equation*}
 Taking $\rho>1$ large enough, then the second inequality in (\ref{E3.103}) comes from (\ref{E3.98}) and (\ref{E3.104}). Next we can compute
\begin{equation}\label{E3.105}
 |w_{R,\rho}(\rho e)|\leq \left(\int\limits_{U\backslash B_1(\rho e)}+\int\limits_{U\cap B_1(\rho e)}\right)|f(y)|\log|\rho e-y|dy:=J_1+J_2.
\end{equation}
It is easily seen that
\begin{equation*}
J_1\leq C\int\limits_{U}(\log \rho+\log |y|)|y|^{-2-(4/3)\alpha_F}dy\leq\rho^{-\alpha_F}/2,
\end{equation*}
and
\begin{equation*}
J_2\leq C\rho^{-2-(4/3)\alpha_F}\int\limits_{B_1(\rho e)}\log|\rho e-y|dy\leq \rho^{-2},
\end{equation*}
by taking $\rho$ sufficiently large. Since $v_R\leq 0$ in $B_{2\rho}\backslash B_{\rho}$ (recalling (\ref{E3.95})), we deduce the third inequality in (\ref{E3.103})
by taking $R\gg\rho$ large enough if necessary.

We now define a harmonic function $h$ by
\begin{equation*}
h(x):=\rho^{-\alpha_F}-\frac{c_{0}}{2}\log (|x|/\rho).
\end{equation*}
By (\ref{E3.103}), we deduce that
\begin{equation*}\label{E3.110}
v_R-w_{R,\rho}\leq h, \qquad \mbox{in}\qquad B_{R/2}\backslash B_\rho,
\end{equation*}
where we are using the comparison principle. Hence
\begin{equation}\label{E3.107}
v_R-w_{R,\rho}\leq h=\rho^{-\alpha_F}-\frac{c_{0}}{2}\log 2,\qquad \mbox{on}\qquad \partial B_{2\rho}.
\end{equation}

On the other hand, repeating the preceding computation, we obtain $|w_{R,\rho}|\leq\rho^{-\alpha_F}$ on $\partial B_{2\rho}$. In summary, we find that
\begin{equation*}
|v_R-w_{R,\rho}|\leq\epsilon+O(\rho^{-\alpha_F})\qquad \mbox{on}\qquad \partial B_{2\rho}.
\end{equation*}
This contradicts (\ref{E3.107}) upon taking $\epsilon\rightarrow 0^+$ (recalling $\rho\gg1$). Therefore $D_\infty<0$.

\indent\textit{Step 3.3} Let
\begin{equation*}
u(x):=\frac{D}{D_\infty}u_\infty\left(\sqrt{\frac{D_\infty}{D}}x\right).
\end{equation*}
It is easily seen that $\Phi(u)=q(x).$

The proof is completed.
\end{proof}

\section{ The coincidence set is slender provided $u\in\mathcal{G}_{\infty}$. }

  To simplify our statement, let us introduce a few notations. Suppose that $\mathcal{K}$ is a closed convex set in $\mathbb{R}^{n}(n\geq 2)$ with $\mathrm{Int}\,\mathcal{K}\neq\emptyset$. Suppose further that $\partial\mathcal{K}$ has local $C^1$ regularity. For any  $x_0\in\partial\mathcal{K}$, we denote by $\nu(x_0)$ the unit outward normal of $\mathcal{K}$
  at $x_0$, and we denote by $L_{x_0}$ the supporting hyperplane of $\mathcal{K}$ at $x_0$. Given $d>0$, the set $\Omega_d(\mathcal{K})$ is defined as follows:
 \begin{equation*}
\Omega_d(\mathcal{K}):=\{x\in\mathbb{R}^{n}: \mbox{dist}(x,\mathcal{K})>d\}.
\end{equation*}

Fixing $r\geq 1$, the cylinder $\mathcal{C}_t$ is defined as follows:
 \begin{equation*}
 \mathcal{C}_t:=\mathcal{C}_{rt,t}(0)=B'_{rt}(0)\times (-t,t),\qquad\mbox{for all}\quad t\in\mathbb{R}_{+}.
 \end{equation*}

 The following lemma on linear equations is useful for our proof. Similar ideas were used in the work by Silva-Savin \cite{SS}.

\begin{lemma}
 \label{lemma 4.0}
Suppose that the origin $0\in\partial\mathcal{K}$. Suppose further that there exists a unit vector $e\in\partial B'_1$ such that
\begin{equation}\label{E4.74}
\theta:=\angle\vartheta(e,\nu(0))<\pi/2.
\end{equation}
Let the linear elliptic operator $\mathcal{L}$ be given by (\ref{E2.45}), and let $r\geq 1$.

Then there are universal constants $M(n,\lambda,\Lambda)>1$ large, and $\delta(n,\lambda,\Lambda,\theta)>0$ small, such that, if $w$ satisfies
\begin{equation*}
\left\{\begin{split}
\mathcal{L}w&=0 \quad\mbox{in}\quad\mathcal{C}_1,\\
w&=0\quad\mbox{in}\quad\mathcal{C}_1\cap\mathcal{K},\\
w&>M\quad\mbox{in}\quad\mathcal{C}_1\cap\Omega_{\delta}(\mathcal{K}),\\
w&\geq -1\quad\mbox{in}\quad\mathcal{C}_1,
\end{split}\right.
\end{equation*}
 then
\begin{equation}\label{E4.70}
w(te)\geq 0,\qquad\mbox{for all}\quad t\in(0,r).
\end{equation}
\end{lemma}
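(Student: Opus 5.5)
The plan is to split the segment $\{te: 0<t<r\}$ into a far part, where the hypothesis $w>M$ applies directly, and a near part close to $0$, where I would compare $w$ with harmonic measures of $\mathcal{L}$. Since $\mathcal{K}$ is convex with $0\in\partial\mathcal{K}$, it lies in the half-space $\{x\cdot\nu(0)\leq 0\}$. By (\ref{E4.74}), $te\cdot\nu(0)=t\cos\theta$, hence
\begin{equation*}
\dist(te,\mathcal{K})\geq t\cos\theta .
\end{equation*}
Because $e\in\partial B'_1$ is horizontal and $r\geq 1$, every point $te$ with $t\in(0,r)$ lies in $\mathcal{C}_1$. Consequently, for $t\geq 2\delta/\cos\theta$ we have $te\in\Omega_\delta(\mathcal{K})\cap\mathcal{C}_1$, so $w(te)>M>0$. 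It therefore remains to treat $0<t<t_0:=2\delta/\cos\theta$. I will choose $\delta$ so that $t_0\ll 1$; this is exactly where $\delta$ acquires its dependence on $\theta$.

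For the near part, work in the domain $G:=B_{1/2}\setminus\mathcal{K}$, which is contained in $\mathcal{C}_1$ since $r\geq 1$. Split $\partial B_{1/2}\setminus\mathcal{K}$ into
\begin{equation*}
\Sigma_1:=\partial B_{1/2}\cap\Omega_\delta(\mathcal{K}),\qquad \Sigma_2:=\partial B_{1/2}\cap\{0<\dist(\cdot,\mathcal{K})\leq\delta\}.
\end{equation*}
Let $\omega_i$ be the $\mathcal{L}$-harmonic measure of $\Sigma_i$ in $G$, that is, the solution equal to $\chi_{\Sigma_i}$ on $\partial B_{1/2}$ and to $0$ on $\partial\mathcal{K}\cap B_{1/2}$. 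The boundary values of $w$ are $0$ on $\mathcal{K}$, at least $M$ on $\Sigma_1$, and at least $-1$ on $\Sigma_2$. The maximum principle then gives
\begin{equation*}
w\geq M\omega_1-\omega_2\qquad\text{in } G,
\end{equation*}
so it suffices to show $\omega_2(te)\leq M\omega_1(te)$ for $0<t<t_0$.

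Both $\omega_1$ and $\omega_2$ are positive $\mathcal{L}$-solutions in $G$ that vanish on $\partial\mathcal{K}\cap B_{1/2}$. The complement of a convex set satisfies a uniform exterior cone condition, and near $0$ it is even Lipschitz with constants controlled by $\theta$. I would therefore invoke a boundary Harnack comparison, valid for nondivergence operators in Lipschitz or NTA-type domains, applied in $B_{1/4}\setminus\mathcal{K}$. It yields
\begin{equation*}
\frac{\omega_2(te)}{\omega_1(te)}\leq C\,\frac{\omega_2(x^*)}{\omega_1(x^*)}
\end{equation*}
for a fixed interior reference point $x^*$, say $x^*=\tfrac14 e$. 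At $x^*$, interior Harnack together with a barrier gives $\omega_1(x^*)\geq c(n,\lambda,\Lambda)$, because $\Sigma_1$ contains a cap of $\partial B_{1/2}$ of definite size around $\tfrac12 e$. On the other hand, $\omega_2(x^*)\to 0$ as $\delta\to 0$: $\Sigma_2$ is a thin shell of width $\delta$ around $\partial\mathcal{K}$ on $\partial B_{1/2}$, and an exterior barrier attached to $\mathcal{K}$ at each point of $\Sigma_2$ (for example a cone/Hölder barrier) shows that its harmonic measure seen from $x^*$ is at most $C\delta^{\beta}$. Taking $M$ universal and then $\delta=\delta(n,\lambda,\Lambda,\theta)$ small enough that $C\delta^\beta\leq cM$ gives $w(te)\geq 0$ on the near part.

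The main obstacle is the boundary Harnack step. Its constant must depend only on $n,\lambda,\Lambda,\theta$, uniformly over all convex $\mathcal{K}$, and it must hold for the merely measurable coefficients in (\ref{E2.45}). If that is too heavy, I would try a direct barrier argument in the spirit of Lemma \ref{le:2.2}. This would use interior Harnack along a Harnack chain in the cone $V_0(e,\pi/2-\theta)$, which avoids $\mathcal{K}$, to get linear growth $w(te)\gtrsim Mt$ from the region where $w>M$. A Hölder decay bound $w^-\lesssim (t/\delta)^{\beta}$ near $\partial\mathcal{K}$ would then show that the negative part cannot win at any scale.
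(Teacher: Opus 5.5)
Your far-range step is correct, and the half-space bound $\dist(te,\mathcal K)\ge t\cos\theta$ is cleaner than the paper's cone argument. The inequality $w\ge M\omega_1-\omega_2$ in $B_{1/2}\setminus\mathcal K$ also follows from the maximum principle. The gap is the boundary Harnack step, which is where the whole content of the lemma lies.

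Your justification, that $B_{1/4}\setminus\mathcal K$ is Lipschitz near $0$ with constants controlled by $\theta$, is false. The angle $\theta$ only compares $e$ with $\nu(0)$ and says nothing about $\partial\mathcal K$ away from $0$. For example, let $\mathcal K$ be a convex plate of thickness $\varepsilon$ whose rounded edge passes through $0$ with $\nu(0)=e$, so $\theta=0$. At every scale between $\varepsilon$ and $1/4$, the set $B_{1/4}\setminus\mathcal K$ looks like a ball minus a half-hyperplane (a slit domain), with no Lipschitz graph representation at a scale independent of $\varepsilon$.

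The standard boundary Harnack principles for nondivergence operators with merely measurable coefficients are for Lipschitz domains, with constants depending on the Lipschitz character. So you cannot quote one that is uniform over all convex $\mathcal K$. In fact the lemma is essentially the key step in proving such a principle; this is how De Silva--Savin derive it. Invoking it therefore presupposes what is to be shown.

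Your mechanism for $\omega_2(x^*)\le C\delta^\beta$, a barrier attached to $\mathcal K$, is not uniform either: a thin needle in $n\ge 3$ supports no uniform barrier. The fallback does not repair this. Harnack chains can only be run for $w+1\ge 0$, not for $w$, so they give no lower bound on $w$ itself at small scales. Even granting $w(te)\gtrsim Mt$ against $w^-\lesssim (t/\delta)^\beta$ with $\beta<1$, the negative term wins as $t\to 0$.

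The missing idea is a scale-invariant iteration. It uses only that $\mathbb{R}^n\setminus\mathcal K$ contains the half-space $\{(x-x_0)\cdot\nu(x_0)>0\}$ at every $x_0\in\partial\mathcal K$, and needs no Lipschitz or fatness property of $\mathcal K$.

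\begin{enumerate}
\item Harnack applied to $w+1$ in $B_{2d}(x_0+2d\,\nu(x_0))$ gives $w>\alpha M$ on $\mathcal C_{1-3\delta}\cap\Omega_{\delta/2}(\mathcal K)$, with $M=2/c_1$ and $\alpha=c_1/2$.
\item Take $x$ within $\delta$ of $\mathcal K$, with nearest point $x_0$, and set $z=x+\delta\nu(x_0)$. Then $B_{2\delta}(z)$ contains $B_{\delta/2}(z)$, on which $w>0$. The measure (weak Harnack) estimate for the subsolution $w^-$ gives $w^-(x)\le(1-\beta)\sup_{B_{2\delta}(z)} w^-$.
\item Iterating step 2 $k$ times gives $w\ge-\alpha$ in $\mathcal C_{1/2}$.
\item Hence $\alpha^{-1}w(x/2)$ satisfies the same hypotheses relative to $2\mathcal K$, with the same $\theta$. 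Positivity and negativity degrade by the same factor $\alpha$ per dyadic scale, so the ratio $M$ is preserved.
\item At every scale, $w(te)>0$ for $t\in[2^{-j-1}r,2^{-j}r)$, because there $\dist(te,\mathcal K)\ge t\cos\theta$ exceeds $2^{-j}\delta$.
\end{enumerate}
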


\begin{proof}
We divide the proof into three steps.

\indent\textit{Step 1. }
 We show that there exists a constant $\alpha\in(0,1)$, depending only on $n,\lambda,\Lambda$, such that
 \begin{equation}\label{E4.71}
 w>\alpha M,\qquad\mbox{in}\quad\mathcal{C}_{1-3\delta}\cap\Omega_{\delta/2}(\mathcal{K}).
 \end{equation}

Let the constant $\delta\in(0,1/32)$ be chosen later. Picking $x\in\mathcal{C}_{1-3\delta}\cap\big(\Omega_{\delta/2}(\mathcal{K})\backslash\Omega_{\delta}(\mathcal{K})\big)$,
we can select a point $x_0\in\partial\mathcal{K}$, such that $|x-x_0|=\dist(x,\mathcal{K})$. Denoting $d_x:=|x-x_0|$, then $d_x\in(\delta/2,\delta)$. Let $z:=x+d_x\nu(x_0)$.
Then we have
\begin{equation*}
z\in\mathcal{C}_{1-2\delta},\qquad\dist(z,\mathcal{K})\geq\dist(z,L_{x_0})=2d_x.
\end{equation*}

By
\begin{equation*}
\left\{\begin{split}
\mathcal{L}(w+1)&=0,\\
w+1&\geq 0,
\end{split}\right.
\quad\mbox{in}\quad B_{2d_x}(z)\subset\mathcal{C}_1,
\end{equation*}
we deduce that
\begin{equation*}
w(x)+1\geq\inf_{B_{d_x}(z)}(w+1)\geq c_1(w(z)+1)\geq c_1(M+1),
\end{equation*}
where $c_1$ is a constant depending only on $n,\lambda,\Lambda$.

Taking $M=2/c_1$ and $\alpha=c_1/2$, we have
 \begin{equation*}
w(x)>1=\alpha M.
\end{equation*}

\indent\textit{Step 2. }
 We show that
 \begin{equation}\label{E4.72}
 \left\{\begin{split}
 w\geq-\alpha,\qquad&\mbox{in}\quad\mathcal{C}_{1/2},\\
 w(te)>0,\qquad&\mbox{for}\quad t\in[r/2,r).
 \end{split}\right.
 \end{equation}

To verify (\ref{E4.72}), we claim that
\begin{equation}\label{E4.75}
V_0\left(e,\frac{\pi}{4}-\frac{\theta}{2}\right)\cap(\mathcal{K}\backslash\{0\})=\emptyset,
\end{equation}
(see Notations (8)). In fact, given $0\neq x\in V_0\left(e,\frac{\pi}{4}-\frac{\theta}{2}\right)$, by (\ref{E4.74}), we find that
\begin{equation*}
\angle\vartheta(x,\nu(0))\leq\angle\vartheta(\nu(0),e)+\left(\frac{\pi}{4}-\frac{\theta}{2}\right)<\frac{\pi}{2}.
\end{equation*}
By the convexity of $\mathcal{K}$, $L_0$ separates $\mathcal{K}$ and $V_0\left(e,\frac{\pi}{4}-\frac{\theta}{2}\right)$ (Recall that $L_0$ is the supporting hyperplane at the origin). Hence (\ref{E4.75}) follows.

 Let $\delta\in\big(0,\sin(\pi/4-\theta/2)\big)$ be a small constant to be chosen later. By (\ref{E4.75}) and $r\geq1$, we observe that
\begin{equation*}
\dist(te,\mathcal{K})>\frac{r}{2}\sin(\pi/4-\theta/2)\geq\delta/2\quad\mbox{for}\quad t\in[r/2,r).
\end{equation*}
Then we obtain that $w(te)>0$ for $t\in[r/2,r)$.

Next we show that $w\geq-\alpha$ in $\mathcal{C}_{1/2}$. Picking $x\in\mathcal{C}_{1-3\delta}\cap\Big(\Omega_{\delta/2}(\mathcal{K})\backslash\Omega_{\delta}(\mathcal{K})\Big)$,
let $x_0$ be defined as in the previous step. Let $z:=x+\delta\nu(x_0)$. Then $z\in\mathcal{C}_{1-2\delta}$. By
\begin{equation*}
B_{\delta}(z)\cap\mathcal{K}=\emptyset,
\end{equation*}
we have
\begin{equation*}
\inf_{y\in B_{\delta/2}(z)}\dist(y,\mathcal{K})\geq\inf_{y\in B_{\delta/2}(z)}\dist(y,L(x_0))>\delta/2.
\end{equation*}
Using (\ref{E4.71}), we have $w>0$ in $B_{\delta/2}(z).$ Then $w^{-}$ satisfies
\begin{equation*}
\left\{\begin{split}
\mathcal{L}w^{-}&\geq 0,\quad\mbox{in}\quad B_{2\delta}(z)\subset\mathcal{C}_1,\\
|\{B_{2\delta}(z)&:w^{-}=0\}|\geq|B_{\delta/2}(z)|.
\end{split}\right.
\end{equation*}

Applying the weak Harnack inequality for subsolution (cf. \cite{KS}), we deduce that
\begin{equation*}
w^{-}(x)\leq\sup_{B_{\delta}(z)}w^{-}\leq(1-\beta)\sup_{B_{2\delta}(z)}w^{-}\leq 1-\beta,
\end{equation*}
where $\beta>0$ is a constant depending only on $n,\lambda,\Lambda$, in other words, we have
\begin{equation*}
w^{-}\leq 1-\beta\qquad\mbox{in}\quad\mathcal{C}_{1-3\delta}.
\end{equation*}

Iterating this result, we find that
\begin{equation*}
w\geq-(1-\beta)^{k}\qquad\mbox{in}\quad\mathcal{C}_{1-3k\delta}.
\end{equation*}
Then (\ref{E4.72}) follows by choosing
\begin{equation*}
k=\left[\frac{\log\alpha}{\log(1-\beta)}\right]+1,\quad\quad\delta=\min\big((6k)^{-1}, \sin(\pi/4-\theta/2), 1/32\big).
\end{equation*}

\indent\textit{Step 3. } After a dilation, we repeat the process again. Consider
\begin{equation*}
w_{j}(x):=\alpha^{-j}w(x/2^j),\qquad \mathcal{K}_{j}:=\{x:x/2^j\in\mathcal{K}\},\qquad j=1,2,\cdots
\end{equation*}
By iterating, we conclude that
\begin{equation*}
 w\geq-\alpha^{-j}\quad\mbox{in}\quad\mathcal{C}_{2^{-j}},\qquad\mbox{and}\qquad w(te)>0\quad\mbox{for}\quad t\geq 2^{-j}r.
 \end{equation*}
Hence we obtain (\ref{E4.70}).
\end{proof}

\begin{defn}
\label{def: vector of unbounded }
Let $u(x)$ be a nonnegative continuous function in $\mathbb{R}^{n}$. We define
\begin{equation*}
V(u):=\left\{e\in\partial B_1:\mbox{there exists} \{P_k\}_{k=1}^{\infty}\in\Omega^{c}(u), \mbox{such that}\\ |P_k|\rightarrow +\infty, P_k/|P_k|\rightarrow e.\right\}
\end{equation*}
\end{defn}

\begin{lemma}
 \label{le:2.4}
Let $u(x)$ be a nonnegative convex function in $\mathbb{R}^{n}(n\geq 2).$ If $e\in V(u)$, then $u(\cdot+te)\leq u(\cdot)$ for $t\in\mathbb{R}_+.$ Clearly, we have
$\partial_e u\leq 0$ if $u\in C^1(\mathbb{R}^{n})$.
\end{lemma}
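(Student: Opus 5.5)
The plan is to use only convexity and nonnegativity of $u$; no PDE input is needed. Fix $e\in V(u)$, a point $x\in\mathbb{R}^n$ and $t>0$. By Definition \ref{def: vector of unbounded } there are points $P_k\in\Omega^c(u)$ with $|P_k|\to+\infty$ and $P_k/|P_k|\to e$, so that $u(P_k)=0$. The idea is to place $x+te$ as an interior point of the segment from $x$ to a point close to $P_k$, and then let $k\to\infty$.

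\textbf{Step 1 (convex combination).} For $k$ large we have $|P_k|>t$, and we set $s_k:=t/|P_k|\in(0,1)$. Define
\begin{equation*}
y_k:=(1-s_k)x+s_kP_k .
\end{equation*}
Convexity of $u$, together with $u(P_k)=0$, gives
\begin{equation*}
u(y_k)\le(1-s_k)u(x)+s_k u(P_k)=(1-s_k)u(x)\le u(x),
\end{equation*}
where the last inequality uses $u\ge 0$.

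\textbf{Step 2 (convergence of $y_k$).} We compute
\begin{equation*}
y_k-(x+te)=t\Big(\frac{P_k}{|P_k|}-e\Big)-s_kx .
\end{equation*}
The first term tends to $0$ because $P_k/|P_k|\to e$. The second tends to $0$ because $s_k\to 0$ while $x$ is fixed. Hence $y_k\to x+te$.

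\textbf{Step 3 (conclusion).} The function $u$ is continuous; it is assumed continuous, and in any case every finite convex function on $\mathbb{R}^n$ is continuous. Letting $k\to\infty$ in Step 1 therefore yields $u(x+te)\le u(x)$ for all $x$ and all $t>0$.

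For the final claim, suppose $u\in C^1$. Then
\begin{equation*}
\partial_e u(x)=\lim_{t\to0^+}\frac{u(x+te)-u(x)}{t}\le 0 .
\end{equation*}

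No step is a real obstacle. The only delicate point is the choice of the weight $s_k=t/|P_k|$, which makes the convex combination converge exactly to $x+te$.
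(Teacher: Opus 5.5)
Your proof is correct and is essentially the paper's argument. The paper also obtains $x+te$ as the limit of the convex combinations $(1-\varepsilon_k)x+\varepsilon_kP_k$ with $u(P_k)=0$ and $\varepsilon_k\to0^+$; the only difference is that it first rotates so that $e=e_n$ and takes $\varepsilon_k=t/(P_k)_n$, whereas you normalize by $|P_k|$.
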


\begin{proof}
Without loss of generality, we set $e=e_{n}$. In other words, there exists a sequence $P_k(P_k',(P_k)_n)$ such that $(P_k)_n\rightarrow+\infty,P_k'/(P_k)_n\rightarrow 0,$
and $u(P_k)=0.$

For any $t\in\mathbb{R}_+$, choosing $\varepsilon_k=t/(P_k)_n\rightarrow 0^+$, we have $|\varepsilon_k P_k'|=o(1)$. By convexity of $u$, it is easily seen that
\begin{equation*}
 \begin{split}
u[(1-\varepsilon_k)x'+\varepsilon_k P_k', (1-\varepsilon_k)x_n+t]&=u[(1-\varepsilon_k)x+\varepsilon_k P_k]\\
&\leq(1-\varepsilon_k)u(x',x_n)+\varepsilon_k u(P_k).
 \end{split}
\end{equation*}
Hence the result follows by taking $k\rightarrow+\infty$.
\end{proof}

\begin{lemma}
 \label{le:2.6}
Let $u(x)$ be a nonnegative convex function in $\mathbb{R}^{n}(n\geq 2)$. Suppose that $\mathrm{Int}\,\Omega^{c}(u)\neq\emptyset$. Suppose further that
$V(u)\neq\emptyset$. Then $\mathbb{R}_+V(u)$ is a closed convex cone in $\mathbb{R}^{n}$.
\end{lemma}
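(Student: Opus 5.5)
The plan is to identify $\mathbb{R}_+V(u)\cup\{0\}$ with the recession cone of the closed convex set $\mathcal{K}:=\Omega^{c}(u)$. Since $u$ is nonnegative and convex, $\mathcal{K}=\{u\le 0\}$ is a sublevel set of a convex function. It is therefore closed and convex, and by hypothesis it has nonempty interior. For a closed convex set the recession cone
\begin{equation*}
\mathrm{rec}(\mathcal{K}):=\{d\in\mathbb{R}^n: x+td\in\mathcal{K}\ \text{for all}\ x\in\mathcal{K},\ t\geq 0\}
\end{equation*}
is a closed convex cone. It suffices to show $\mathbb{R}_+V(u)=\mathrm{rec}(\mathcal{K})$ (including the origin, or adjoining $0$ if $\mathbb{R}_+$ is read as $[0,\infty)$).

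\emph{Inclusion $\mathbb{R}_+V(u)\subseteq\mathrm{rec}(\mathcal{K})$.} Let $e\in V(u)$. By Lemma \ref{le:2.4}, $u(x+te)\le u(x)$ for all $t\geq0$. Hence if $u(x)=0$, then $0\le u(x+te)\le 0$, so $x+te\in\mathcal{K}$. Thus $e\in\mathrm{rec}(\mathcal{K})$, and the same holds for positive multiples of $e$.

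\emph{Inclusion $\mathrm{rec}(\mathcal{K})\subseteq\mathbb{R}_+V(u)$.} Take $d\neq 0$ in $\mathrm{rec}(\mathcal{K})$ and fix $x\in\mathcal{K}$. The points $P_k:=x+kd$ lie in $\mathcal{K}$, satisfy $|P_k|\to\infty$, and $P_k/|P_k|\to d/|d|$. Hence $d/|d|\in V(u)$.

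Closedness can also be checked directly. Let $e_j\in V(u)$ with $e_j\to e$. By the first inclusion each $e_j$ is a recession direction, and recession directions of a closed set form a closed set: if $x+te_j\in\mathcal{K}$ for all $j$, then $x+te\in\mathcal{K}$ by closedness of $\mathcal{K}$. So $e\in V(u)$ by the second inclusion. Convexity follows likewise, since for $e_1,e_2\in V(u)$ and $x\in\mathcal{K}$ we have $x+t(se_1+(1-s)e_2)=x+tse_1+t(1-s)e_2\in\mathcal{K}$ by applying the recession property twice.

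No step is a serious obstacle. The only care needed is to verify that $\mathcal{K}$ is closed and convex, which follows from the continuity and convexity of $u\ge0$, and to use Lemma \ref{le:2.4} to pass from ``asymptotic direction of some sequence'' to ``recession direction at every point''. The interior assumption is not really needed beyond guaranteeing $\mathcal{K}\neq\emptyset$.
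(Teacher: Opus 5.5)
Your proof is correct, and its key step is the paper's: Lemma \ref{le:2.4} turns an asymptotic direction into a ray contained in $\Omega^{c}(u)$ from every point, and convexity of $u$ (equivalently of $\Omega^{c}(u)$) then gives convexity of the cone.

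The differences lie in packaging and in the closedness step.
\begin{itemize}
\item You prove the identity $\mathbb{R}_+V(u)\cup\{0\}=\mathrm{rec}(\Omega^{c}(u))$ explicitly, so closedness and convexity are inherited from the recession cone of a closed convex set. The paper works only with rays from a point $0\in\Omega^{c}(u)$. It uses your second inclusion only tacitly, when it concludes $x_\theta\in\mathbb{R}_+V(u)$ from $sx_\theta\in\Omega^{c}(u)$ for all $s\geq 0$.
\item For closedness, the paper runs a diagonal argument directly on the definition of $V(u)$. From $P^k_j/|P^k_j|\to\nu_k$ and $\nu_k\to\nu$ it extracts one sequence in $\Omega^{c}(u)$ whose directions tend to $\nu$. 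That argument shows the set of asymptotic directions of an arbitrary set is closed, with no convexity and no Lemma \ref{le:2.4}. Yours instead uses closedness of $\Omega^{c}(u)$ together with the recession property.
\item Your formulation makes the treatment of the vertex $0$ explicit. This covers both the convention for $\mathbb{R}_+$ and the case where a convex combination of two directions vanishes, which the paper glosses over.
\end{itemize}

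One small correction to your closing remark: the interior hypothesis is not needed at all. $V(u)\neq\emptyset$ already forces $\Omega^{c}(u)\neq\emptyset$.
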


\begin{proof}
Without loss of generality, we assume in addition that the origin $0\in\Omega^{c}(u)$. For any unit vector $\tau\in V(u)$, Lemma \ref{le:2.4} implies that the ray
$\{\mathbb{R}_+\tau\}\subseteq\Omega^{c}(u).$ Hence $\mathbb{R}_+V(u)$ is a cone.

We verify that the cone is convex. Choosing $x_1,x_2\in\mathbb{R}_+V(u)$, let
$$
x_{\theta}:=\theta x_1+(1-\theta)x_2,\qquad \forall\theta\in (0,1).
$$
We want to show $x_{\theta}\in\mathbb{R}_+V(u)$. Once we verify that $sx_{\theta}\in\Omega^{c}(u)$ for any $s\geq 0$, this result follows. In fact, Lemma \ref{le:2.4} implies that
$sx_{1},sx_{2}\in\Omega^{c}(u)$. Since $u\geq 0$ in $\mathbb{R}^{n}$, $u(sx_{\theta})=0$ follows from the convexity of $u$.

Assume now that we have unit vectors $\{\nu_k\}_{k=1}^{\infty}\subset V(u)$ with $\nu_k\rightarrow\nu$. Then there exists
$\{P^{k}_{j}\}_{j=1}^{\infty}\subset\Omega^{c}(u)$ such that
$$
P^{k}_{j}/|P^{k}_{j}|\rightarrow \nu_k \qquad\mbox{as}\quad j\rightarrow +\infty.
$$
Taking a subsequence if necessary, we have
$$
P^{k}_{j}/|P^{k}_{j}|\rightarrow\nu\qquad \mbox{as}\quad j, k\rightarrow +\infty.
$$
Then we deduce that $\nu\in V(u)$, that is, the cone $\mathbb{R}_+V(u)$ is closed.
\end{proof}

\begin{cor}
 \label{cor: 4.1}
 Suppose that all hypotheses of Theorem \ref{thm:1} hold with the spatial dimension $n\geq 2$. Assume that $V(u)\neq\emptyset$. Assume in addition that $u$ is \textbf{not}
 a lower dimensional global solution. Then we have $\mathrm{Int}\,\Omega^{c}(u)\neq\emptyset$, and
\begin{equation}\label{E4.8}
\sup_{e,\tau\in V(u)}\angle\vartheta(e,\tau)<\pi.
\end{equation}
 \end{cor}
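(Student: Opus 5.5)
We prove the two assertions in order. Throughout, $u$ is convex by Proposition \ref{pro:2.12}, so $\Omega^{c}(u)$ is a closed convex set. It is nonempty because $\Gamma(u)\neq\emptyset$, and it is unbounded because $V(u)\neq\emptyset$.

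\emph{Step 1: $\mathrm{Int}\,\Omega^{c}(u)\neq\emptyset$.} Suppose the interior is empty. Then $\Omega^{c}(u)$ is a convex set of measure zero, so $F(D^2u)=1$ a.e. in $\mathbb{R}^n$. By the interior $C^{1,1}$ estimate (Proposition \ref{pro:2.11}), $u$ is a strong $L^n$ solution of $F(D^2u)=1$ in $\mathbb{R}^n$, hence a viscosity solution. The Evans--Krylov theorem together with the quadratic growth bound then forces $u$ to be a quadratic polynomial; this is exactly the argument in the proof of Lemma \ref{lemma 3.3}. A nonnegative quadratic polynomial vanishing somewhere has the form $\frac12(x-x_0)^TA(x-x_0)$ with $A\ge0$, and its zero set is the affine space $x_0+\ker A$. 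Since $\Omega^{c}(u)$ is unbounded, $\ker A\neq\{0\}$. For any $e\in\ker A$ we get $u(\cdot+te)\equiv u$, so $u$ is lower dimensional, contradicting the hypothesis.

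\emph{Step 2: the angle bound (\ref{E4.8}).} By Lemma \ref{le:2.6}, $\mathcal{V}:=\mathbb{R}_+V(u)$ is a closed convex cone. By Lemma \ref{le:2.4}, $u(\cdot+te)\le u(\cdot)$ for every $e\in V(u)$ and $t\ge0$. Suppose (\ref{E4.8}) fails. Since $V(u)$ is compact, being closed in $\partial B_1$, there exist $e,\tau\in V(u)$ with $\angle\vartheta(e,\tau)=\pi$, that is, $\tau=-e$. Applying the monotonicity in both directions gives
\begin{equation*}
u(x+te)\le u(x)\quad\text{and}\quad u(x)=u\big((x+te)-te\big)\le u(x+te)
\end{equation*}
for all $t\ge0$. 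Hence $u(\cdot+te)\equiv u$ for all $t\in\mathbb{R}$, so $u$ is lower dimensional, again a contradiction. Therefore the supremum is strictly less than $\pi$.

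\emph{Main obstacle.} The only delicate point is Step 1, namely justifying that a convex global solution whose coincidence set has empty interior is a quadratic polynomial. This requires passing from the a.e. equation to a viscosity solution on all of $\mathbb{R}^n$ and then applying the Liouville-type consequence of Evans--Krylov under the $C^{1,1}$ bound. We would cite this exactly as it is used in Lemma \ref{lemma 3.3}. Compactness of $V(u)$ in Step 2 is immediate from the closedness established in Lemma \ref{le:2.6}.
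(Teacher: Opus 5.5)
Your proof is correct and follows the paper's argument. If the interior is empty, $u$ is a quadratic polynomial whose unbounded zero set $x_0+\ker A$ gives an invariant direction; if (\ref{E4.8}) fails, closedness (Lemma \ref{le:2.6}) yields an antipodal pair $\pm e\in V(u)$, and Lemma \ref{le:2.4} then gives invariance along $e$ (you use two-sided translation monotonicity where the paper uses the sign of $\partial_e u$, a cosmetic difference).
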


\begin{proof}
Assuming by contradiction that $\mathrm{Int}\,\Omega^{c}(u)=\emptyset$, we have proved $u(x)=\frac{1}{2}x^TAx$ in the previous section. Then $\det(A)=0$ comes from $V(u)\neq\emptyset$, i.e., $u$ is a lower dimensional global solution. This contradicts one of our assumptions.

Next we show that (\ref{E4.8}) holds. On the contrary, we suppose that (\ref{E4.8}) fails. By Proposition \ref{pro:2.12}, we see that $u$ is convex.
Since $\mathbb{R}_+V(u)$ is a closed cone (see Lemma \ref{le:2.6}), we may choose unit vectors $\pm e\in V(u)$. By Lemma \ref{le:2.4}, we have $\pm(\partial_e u)\leq 0$.
Hence $u$ is a lower dimensional function, a contradiction.
\end{proof}

\begin{re}
\label{re:4.1}
Suppose that all hypotheses of Corollary \ref{cor: 4.1} hold. After a possible translation and rotation of the coordinate system, we can assume that
\begin{equation}\label{E6.3}
0\in\Omega^{c}(u)\subseteq\{x\in\mathbb{R}^n : x_n\leq 0\},\qquad -e_n\in V(u),
\end{equation}
and
\begin{equation}\label{E3.32}
\sup_{\tau\in V(u)}\angle\vartheta(-e_n,\tau)=\theta_0<\pi/2.
\end{equation}
Let $l=\tan\theta_0+1$. By rescaling, we observe that there exists $R_0>1$ large, such that
\begin{equation}\label{E4.11}
\Omega^{c}_{1}(u_R)\subseteq B'_{l}(-e_n),\quad\mbox{for all}\quad R>R_0,
\end{equation}
where $\Omega^{c}_{t}(u_R):=\Omega^{c}(u_R)\cap\{x_n=-t\}.$
\end{re}

\begin{lemma}
 \label{lemma 4.5}
Suppose that all hypotheses of Corollary \ref{cor: 4.1} hold. Suppose further that (\ref{E6.3})-(\ref{E4.11}) hold. Then there exists $\theta_1(\theta_0)\in [0,\pi/2)$, such that
\begin{equation*}
\angle\vartheta(\nu(x_0),\nu'(x_0))\leq\theta_1<\pi/2,\qquad\mbox{for all}\quad x_0\in\partial\Omega^{c}_1(u_R),
\end{equation*}
where $\nu'(x_0)\in\partial B'_1$ denotes the unit outward normal of $\partial\Omega^{c}_1(u_R)$.
\end{lemma}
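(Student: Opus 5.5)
The bound will follow from convexity of $\Omega^{c}(u_R)$, which is invariant under rescaling and so is the same shape as $\Omega^{c}(u)$: it is convex, lies in $\{x_n\le 0\}$, contains the whole recession cone $\mathbb{R}_+V(u)$ (Lemma \ref{le:2.4} and Lemma \ref{le:2.6}), and by (\ref{E3.32}) this cone lies in the closed cone of half-angle $\theta_0$ around $-e_n$. We first fix notation. Let $x_0\in\partial\Omega^{c}_1(u_R)$. By Remark \ref{re:2.13}, $\Gamma(u_R)$ is $C^{1,\alpha}$, so the outward normal $\nu(x_0)$ exists. Write $\nu(x_0)=(\nu^{\prime},\nu_n)$. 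The outward normal of the slice inside the hyperplane $\{x_n=-1\}$ is $\nu'(x_0)=\nu^{\prime}/|\nu^{\prime}|$, provided $\nu^{\prime}\neq 0$. Then
$$\cos\angle\vartheta(\nu(x_0),\nu'(x_0))=|\nu^{\prime}|,$$
so it suffices to show $|\nu_n|\le \sin\theta_1<1$ uniformly in $x_0$ and $R>R_0$. Equivalently, we need the bound $|\nu_n|/|\nu^{\prime}|\le C(\theta_0)$.

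\emph{Upper bound on $\nu_n$.} The supporting half-space at $x_0$ is $\{y:(y-x_0)\cdot\nu(x_0)\le 0\}$, and it contains $x_0+\mathbb{R}_+V(u)$, which lies in $\Omega^{c}(u_R)$ because the recession cone of a convex set adds to each of its points. Testing with the direction $-e_n\in V(u)$ gives $-\nu_n\le 0$, so $\nu_n\ge 0$. Testing with every $\tau\in V(u)$ gives $\tau\cdot\nu(x_0)\le 0$.

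\emph{Lower bound on $|\nu'|$ versus $\nu_n$.} The point $y_0:=-e_n\in\Omega^{c}_1(u_R)$ lies in $\Omega^{c}(u_R)$: indeed $0\in\Omega^c$, $-e_n\in V(u)$, and Lemma \ref{le:2.4} applies. Also $|x_0'|\le l$ by (\ref{E4.11}). Applying the supporting inequality to the segment $\{-se_n:s\ge0\}$ alone only recovers $\nu_n\ge0$, so this does not suffice. Instead we use the apex point $0\in\Omega^{c}(u_R)$ itself:
$$(0-x_0)\cdot\nu\le 0,\qquad\text{that is,}\qquad -x_0'\cdot\nu^{\prime}+\nu_n\le 0 .$$
Since $(x_0)_n=-1$, this yields $\nu_n\le x_0'\cdot\nu^{\prime}\le l|\nu^{\prime}|$. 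Combining with $\nu_n^2+|\nu^{\prime}|^2=1$ gives $|\nu^{\prime}|\ge (1+l^2)^{-1/2}$. Hence
$$\angle\vartheta(\nu,\nu')\le\theta_1:=\arccos\big((1+l^2)^{-1/2}\big)=\arctan l<\pi/2,$$
which depends only on $\theta_0$ because $l=\tan\theta_0+1$. This also shows $\nu^{\prime}\neq0$, so $\nu'(x_0)$ is well defined.

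\emph{Main obstacle.} The delicate point is well-definedness: we must check that $\partial\Omega^{c}_1(u_R)$, the relative boundary of the slice, consists of points of $\Gamma(u_R)$ where the slice is $C^1$ with normal $\nu^{\prime}/|\nu^{\prime}|$. This needs the transversality $\nu^{\prime}\neq0$ just obtained, together with the implicit function theorem applied to the $C^{1,\alpha}$ surface from Remark \ref{re:2.13}. We also need $0\in\Omega^{c}(u_R)$, which holds because rescaling about the origin preserves $0\in\Omega^{c}(u)$.
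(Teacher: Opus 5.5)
Your proposal is correct and follows essentially the same route as the paper. Both arguments show $\nu_n\ge 0$ from the ray $-\mathbb{R}_+e_n\subseteq\Omega^{c}(u_R)$, then test the supporting half-space at $x_0$ against the point $0\in\Omega^{c}(u_R)$ and use $|x_0'|<l$ from (\ref{E4.11}) to obtain $\cos\angle\vartheta(\nu(x_0),\nu'(x_0))\ge(1+l^2)^{-1/2}$. The only differences are cosmetic: you write the key inequality directly as $(0-x_0)\cdot\nu(x_0)\le 0$, whereas the paper rotates so that $\nu'(x_0)=e_1$ and computes where $L_{x_0}$ meets the $x_n$-axis. You are also somewhat more careful than the paper about transversality ($\nu'\neq 0$) and the well-definedness of the slice normal.
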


\begin{proof}
  By Lemma \ref{le:2.4}, $\partial_n u\geq 0$ follows from $-e_n\in V(u).$  Fixing $R>R_0$, we have
\begin{equation}\label{E4.10}
\{-\mathbb{R}_+e_n\}\subseteq\Omega^{c}(u_R).
\end{equation}
 Notice that $L_{x_0}$ is the supporting hyperplane of $\Omega^{c}(u_R)$.

 If $\angle\vartheta(\nu(x_0),e_n)=0$ (or $\pi$), then $L_{x_0}(x)=\{x: x_n=-1\}$.
 This contradicts the fact that $\Omega^{c}(u_R)$ lies on the same side of $L_{x_0}$. If $\angle\vartheta(\nu(x_0),e_n)\in(\pi/2,\pi)$, then $\{-\mathbb{R}_+e_n\}\cap\Omega^{c}(u_R)=\emptyset$, a contradiction. In summary, we deduce that
\begin{equation}\label{E4.76}
0<\angle\vartheta(\nu(x_0),e_n)\leq\pi/2,\quad\mbox{for all}\quad x_0(x'_0,-1)\in\partial\Omega^{c}_1(u_R).
\end{equation}

Denoting by $\tau$ the projection of $\nu(x_0)$ onto the hyperplane $\{x_n=-1\}$, we claim that $\nu'(x_0)=\tau/|\tau|$.

In fact, $\Omega^{c}_1(u_R)$ is a convex set, and $L_{x_0}\cap\{x_n=-1\}$ is its supporting hyperplane. Hence $\tau\bot(L_{x_0}\cap\{x_n=-1\})$. By (\ref{E4.76}), we observe that $0\leq \angle\vartheta(\nu(x_0),\tau)<\pi/2.$
Then we find that $\{\mathbb{R}_+\tau\}\cap\Omega^{c}(u_R)=\emptyset$, that is, $\angle\vartheta(\nu'(x_0),\tau)=0$. The claim follows.

Using (\ref{E4.76}) again, we have
\begin{equation*}
\nu(x_0)=a\nu'(x_0)+\sqrt{1-a^2}e_n,
\end{equation*}
where $a\in(0,1]$. For the case $a=1$, our conclusion is trivial.

Next we consider the case $a\in(0,1).$ By a possible rotation of the coordinate system, we assume that $\nu'(x_0)=e_1$. Then we have
\begin{equation*}
L_{x_0}(x)=\{x: ax_1+\sqrt{1-a^2}x_n+\sqrt{1-a^2}-ax_{0,1}=0\},
\end{equation*}
where $x_{0,1}:=x_0\cdot e_1.$ One can check that
\begin{equation*}
\bigl(ax_{0,1}-\sqrt{1-a^2}\bigr)\frac{e_n}{\sqrt{1-a^2}}\in L_{x_0}.
\end{equation*}

Since $\Omega^{c}(u_R)$ lies on the same side of $L_{x_0}$, using (\ref{E4.10}), we find that
\begin{equation*}
ax_{0,1}-\sqrt{1-a^2}\geq 0.
\end{equation*}
By (\ref{E4.11}), we have $x_{0,1}<l$. Thus we deduce that
\begin{equation*}
\cos\angle\vartheta(\nu(x_0),\nu'(x_0))=a\geq(1+x^2_{0,1})^{-1/2}>(1+l^2)^{-1/2}.
\end{equation*}
Then the result follows.
\end{proof}

\begin{thm}
 \label{thm:thm4.1}
 Suppose that all hypotheses of Corollary \ref{cor: 4.1} hold. Then $V(u)=\{\tau\}$, where $\tau$ is a unit vector.
\end{thm}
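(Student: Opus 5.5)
We argue by contradiction. Suppose $V(u)$ contains two distinct unit vectors. By Lemma \ref{le:2.6}, $\mathbb{R}_+V(u)$ is a closed convex cone. By Corollary \ref{cor: 4.1} it contains no line, and we normalize as in Remark \ref{re:4.1}. Since the cone contains two independent directions, we may find a unit vector $e\in\partial B'_1$ and an angle $\theta'\in(0,\theta_0]$ such that $\tau:=\cos\theta'(-e_n)+\sin\theta' e\in V(u)$. By Lemma \ref{le:2.4}, both $\partial_n u\ge 0$ and $\partial_\tau u\le0$. Set $w:=\partial_e u$, or more precisely a suitable combination such as $w=-\partial_\tau u$. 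The goal is to show that $\partial_e u$ has a sign on all of $\mathbb{R}^n$. We would then combine this with the unboundedness of the slices $\Omega^c_\rho(u)$ in direction $e$ and the convexity of $u$ to force $e$ (or its reflection) to be an invariant direction. Either $\pm e$-monotonicity plus convexity produces a line in $\Omega^c(u)$, which is excluded; or the two cone directions together yield $u(\cdot+se)\le u$ and $u(\cdot-se)\le u$, so $u$ is lower dimensional, which contradicts the hypotheses of Corollary \ref{cor: 4.1}.

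The key tool for the sign is Lemma \ref{lemma 4.0}, applied to blow-downs. Consider $u_R$ for $R>R_0$, and set $\mathcal K:=\Omega^c(u_R)$, which is convex with $C^{1,\alpha}$ boundary by Remark \ref{re:2.13}. Differentiating $F(D^2u_R)=1$ in $\Omega(u_R)$ shows that any directional derivative $w$ solves a linear equation $\mathcal L w=\operatorname{tr}(F_{ij}(D^2u_R)D^2w)=0$ with ellipticity $\lambda,\Lambda$, and that $w=0$ on $\mathcal K$. Take $w=\partial_{\nu'}u_R$ (suitably normalized) near a point $x_0\in\partial\Omega^c_1(u_R)$. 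Lemma \ref{lemma 4.5} guarantees that the horizontal outward normal $\nu'(x_0)$ makes an angle at most $\theta_1<\pi/2$ with $\nu(x_0)$; this is exactly hypothesis (\ref{E4.74}). The remaining hypotheses of Lemma \ref{lemma 4.0} would be verified as follows. Write the derivative in terms of a positive part plus the monotone directions. The lower bound $w\ge-1$ comes from the interior $C^{1,1}$ bound (Proposition \ref{pro:2.11}), which makes $|\nabla u_R|$ bounded on unit cylinders. The lower bound $w>M$ at distance $\delta$ from $\mathcal K$ comes from nondegeneracy (\ref{E2.15}) combined with convexity, since a convex function with $u_R\ge c\delta^2$ at distance $\delta$ has gradient at least $c\delta$ pointing away from $\mathcal K$. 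We would rescale so that this gradient exceeds $M$; this works because convexity lets us compare on a cylinder $\mathcal C_1$ of size comparable to $\delta$. Lemma \ref{lemma 4.0} then gives $w(x_0+t\nu')\ge0$ along long horizontal rays, of length $r$ arbitrary. Scaling back and letting $R\to\infty$ shows that the horizontal slices grow in a controlled way. This excludes the situation in which $\Omega^c_\rho(u)$ spreads linearly in a direction $e$, as it would if $\tau\ne -e_n$ were in $V(u)$.

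Concretely, the contradiction we aim for is this. If $\tau\in V(u)$ with $\theta'>0$, then the slice $\Omega^c_\rho(u)$ contains points at horizontal distance about $\rho\tan\theta'$ in direction $e$. Meanwhile $-e_n\in V(u)$ keeps the axis in $\Omega^c(u)$, so by convexity the slice contains the segment between the axis and these points. On the other hand, the sign of $w$ obtained above, together with convexity, yields $u(\cdot+se)\le u(\cdot)$ for all $s>0$. Applying the same argument on the opposite side of the slice gives $u(\cdot-se)\le u$, so $u$ is invariant in direction $e$, which is a contradiction.

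The main obstacle is verifying the quantitative hypothesis $w>M$ on $\Omega_\delta(\mathcal K)\cap\mathcal C_1$ uniformly in $R$, with $\delta$ depending only on $\theta_1$. This requires care because nondegeneracy gives only quadratic growth. I would first try to obtain it by choosing the cylinder scale adaptively, of order the distance to the free boundary, and using convexity of $u_R$ to convert $u_R\gtrsim \operatorname{dist}^2$ into a gradient lower bound in the direction $\nu'$, which is controlled by Lemma \ref{lemma 4.5}.
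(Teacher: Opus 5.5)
Your proposal has a genuine gap at its central step, the application of Lemma \ref{lemma 4.0}. With $w=\partial_{\nu'}u_R$, a single horizontal derivative, the hypotheses cannot be verified. By convexity $\nabla u_R$ points away from $\mathcal K$, so $\partial_{\nu'(x_0)}u_R$ is not one-signed off $\mathcal K$: it is negative across the slice, on its opposite side. Moreover, $|\nabla u_R|\le C\,\dist(\cdot,\mathcal K)$ by the global $C^{1,1}$ bound, so $w\le C\delta$ at points of $\Omega_\delta(\mathcal K)$ near its boundary. The two hypotheses together would then require $\sup_{\mathcal C_1}w^-\le C\delta/M$, and nothing in your argument provides that. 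Rescaling the cylinder does not change this ratio, so the adaptive choice of scale does not help.

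More fundamentally, even if the lemma applied, its conclusion $\partial_{\nu'}u_R(x_0+t\nu')\ge 0$ is automatic: $t\mapsto u_R(x_0+t\nu')$ is convex and attains its minimum $0$ at $t=0$. Likewise, $w=-\partial_\tau u_R$ is already nonnegative by Lemma \ref{le:2.4}. So the proposed use of the lemma yields no new information. The later passage to a global sign of $\partial_e u$, or to $u(\cdot\pm se)\le u$, has no mechanism behind it. The second of these would in any case need cone directions on both sides of $-e_n$, which you do not have. A one-sided global sign of $\partial_e u$ would indeed suffice, since it puts $e$ or $-e$ into $V(u)$ at angle $\pi/2>\theta_0$ from $-e_n$, contradicting (\ref{E3.32}). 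But that is far stronger than anything your argument reaches.

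The missing idea is to compare two \emph{positive} solutions of the linearized equation. The paper takes the extremal directions $-\tau(\pm\theta_0)$ of the cone $\mathbb{R}_+V(u)$ in a $2$-plane $T\ni e_n$. Then $\partial_n u_R$ and $\partial_{\tau(\pm\theta_0)}u_R$ are both positive in $\Omega(u_R)$ and vanish on $\Gamma(u_R)$. Lemma \ref{lemma 4.0}, with Lemma \ref{lemma 4.5} supplying the angle condition, is applied to the difference $w=\frac{M+2}{C_2\epsilon_R}\partial_{\tau(-\theta_0)}u_R-\frac{1}{C_1\epsilon_R}\partial_nu_R$, where $\epsilon_R=\partial_nu_R(e_n)$.

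The first term is one-signed, so it can be multiplied by a large constant to exceed $M$ on $\Omega^*_\delta$ (by Harnack) without spoiling the lower bound. The only negative contribution is bounded by $1$ via Harnack and convexity in $x_n$, see (\ref{E4.83}). This gives the boundary-Harnack comparison $\partial_nu_R\le N\partial_{\tau(-\theta_0)}u_R$ in $B_1$ (or its analogue with $+\theta_0$), uniformly in $R$. By the elementary estimate (\ref{E4.90}), this forces $\partial_{\tau(-\theta_0-\eta)}u_R\ge 0$ in $B_1$ for a fixed $\eta>0$. Hence $u_R(-\tau(-\theta_0-\eta))=0$ for all large $R$, so $-\tau(-\theta_0-\eta)\in V(u)$, contradicting the maximality of $\theta_0$.

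So the contradiction comes from enlarging the monotonicity cone by a fixed angle past an extremal direction, not from monotonicity in a horizontal direction.
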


\begin{proof}
We divide the proof into four steps.

\indent\textit{Step 1. }
Suppose on the contrary that $\tau_1,\tau_2\in V(u)\cap\partial B_1$, where $\tau_1\neq\tau_2$. Applying Corollary \ref{cor: 4.1}, we have
 \begin{equation*}
0<\sup_{e,\tau\in V(u)}\angle\vartheta(e,\tau)=2\theta_0<\pi.
\end{equation*}
We now define
\begin{equation*}
\tau(\alpha):=\Big(\cos(\pi/2+\alpha),0,\cdots,0,\sin(\pi/2+\alpha)\Big).
\end{equation*}

 Without loss of generality, we can assume that $-\tau(-\theta_0),-\tau(\theta_0)\in V(u)$. Notice that $\tau(0)=e_n$. Since $u$ is a convex function,
by Lemma \ref{le:2.6}, we obtain
 \begin{equation*}
 -\tau(\alpha)\in V(u),\qquad\mbox{for all}\quad\alpha\in[-\theta_0,+\theta_0].
 \end{equation*}
Using Lemma \ref{le:2.4}, we observe that
\begin{equation*}
 \partial_{\tau(\alpha)}u\geq 0,\qquad\mbox{for all}\quad\alpha\in[-\theta_0,+\theta_0].
 \end{equation*}
Fixing $\alpha\in[-\theta_0,+\theta_0]$, then $\partial_{\tau(\alpha)}u$ satisfies
\begin{equation}\label{E6.10}
\left\{\begin{split}
a^{ij}(x)\partial_{ij}w=0 &\quad\mbox{in}\quad\Omega(u),\\
w=0 &\quad\mbox{on}\quad \Gamma(u),
\end{split}\right.
\end{equation}
where $a^{ij}(x)=F_{ij}(D^{2}u(x))$. Then we observe that
\begin{equation*}
\partial_{\tau(\alpha)}u>0\quad\mbox{in}\quad\Omega(u),\qquad\mbox{for}\quad\alpha\in[-\theta_0,+\theta_0].
\end{equation*}
Others, the strong maximum principle implies $\partial_{\tau(\alpha)}u\equiv 0$ in $\Omega(u)$, a contradiction.

By Remark \ref{re:4.1}, we may assume that (\ref{E6.3})-(\ref{E4.11}) holds. Let $T:=\{x_1,0,\cdots,0,x_n\}$. We denote by $D_{T}u_R(e_n)$ the projection of $Du_R(e_n)$ onto
the plane $T$. Let
\begin{equation*}
 \tau\big(\beta(e_n)\big):=D_{T}u_R(e_n)/|D_{T}u_R(e_n)|.
\end{equation*}
Since $\partial_{n}u_R(e_n)>0$, it is clear that $\angle\vartheta\Big(\tau\big(\beta(e_n)\big),e_n\Big)<\pi/2$, i.e.,
\begin{equation*}
\beta(e_n)\in(-\pi/2,\pi/2).
\end{equation*}

\indent\textit{Step 2. }
 We show that there exists a universal constant
$N>1$, depending only on $n,\lambda,\Lambda,\theta_0$ such that either
\begin{equation}\label{E4.80}
\partial_{n}u_R\leq N\partial_{\tau(-\theta_0)}u_R\quad\mbox{in}\quad B_1,
\end{equation}
or
\begin{equation}\label{E4.81}
\partial_{n}u_R\leq N\partial_{\tau(\theta_0)}u_R\quad\mbox{in}\quad B_1,
\end{equation}
for all $R>R_0$, where $R_0$ is given by (\ref{E4.11}).

\indent\textit{Step 2.1}
We verify that (\ref{E4.80}) holds provided $\beta(e_n)\in(-\pi/2,0]$.

 Let $\epsilon_R:=\partial_{n}u_R(e_n)>0$. Choosing $l=\tan\theta_0+1$, the interior Harnack inequality implies that
 \begin{equation*}
\partial_{n}u_R<C(n,\lambda,\Lambda,\theta_0)\epsilon_R\quad\mbox{on}\quad B'_{6l}(2e_n).
\end{equation*}
 Using the convexity of $u_R$, we find that
 \begin{equation}\label{E4.83}
\partial_{n}u_R<C_1(n,\lambda,\Lambda,\theta_0)\epsilon_R\quad\mbox{in}\quad\mathcal{C}_{6l, 2}(0).
\end{equation}
By $\beta(e_n)\in(-\pi/2,0]$, we observe that
 \begin{equation*}
\angle\vartheta\Big(\tau\big(\beta(e_n)\big),\tau(-\theta_0)\Big)\leq\max(\theta_0,\pi/2-\theta_0).
\end{equation*}
Denoting $\gamma:=\max(\theta_0,\pi/2-\theta_0)$, then we have
\begin{equation}\label{E4.85}
\partial_{\tau(-\theta_0)}u_R(e_n)\geq|D_{T}u_R(e_n)|\cos\gamma\geq\epsilon_R\cos\gamma.
\end{equation}

To prove (\ref{E4.80}),  we claim that
\begin{equation}\label{E4.84}
\partial_{n}u_R\leq N\partial_{\tau(-\theta_0)}u_R\quad\mbox{on}\quad\partial\mathcal{C}_{3l,1}(0).
\end{equation}
This claim can be seen as follows. Picking $y\in\partial\mathcal{C}_{3l,1}(0)$, since $|Du_R|=0$ on $\Omega^{c}(u_R)$, we may assume that $y\in\Omega(u_R)$.

For the case $y\in B'_{3l}(e_n)\cup(\partial B'_{3l}(0)\times(-1,1)),$ using (\ref{E3.32}), it is easily seen that $\Omega^{c}_2(u_R)\subseteq B'_{2l}(-2e_n)$ for all $R>R_0.$
Using $\partial_n u_R\geq 0$, we have
 \begin{equation*}
\Omega^{c}(u_R)\cap\{x_n>-2\}\subseteq B'_{2l}(0)\times[-2,0].
\end{equation*}
Hence dist$(y,\Omega^{c}(u_R))\geq 1$. Since $\partial_n u_R$ and $\partial_{\tau(-\theta_0)}u_R$ are solutions to (\ref{E6.10}), together with (\ref{E4.83})-(\ref{E4.85}), we obtain $\partial_{n}u_R(y)\leq N\partial_{\tau(-\theta_0)}u_R(y)$ provided $N>1$ sufficiently large, where we are using the Harnack inequality.

For the case $y(y',-1)\in B'_{3l}(-e_n)\cap\Omega(u_R)$, we can select $y_0(y'_0,-1)\in\partial\Omega^{c}_1(u_R)$ such that $|y-y_0|=\dist(y,\Omega^{c}_1(u_R))$.
Since $\{-e_n\}\in\mathrm{Int}\,\Omega^{c}_1(u_R)$, $|y-y_0|\leq 3l$ is implied. Using $|y_0|<l$ (see (\ref{E4.11})), we have
\begin{equation*}
y=y_0+|y-y_0|\nu'(y_0)\in\mathcal{C}_{3l,1}(y_0)\subset\mathcal{C}_{6l,2}(0).
\end{equation*}
(Recall that $\nu'(y_0)\in\partial B'_1$ denotes the unit outward normal of $\partial\Omega^{c}_1(u_R)$). Applying Lemma \ref{lemma 4.5}, we obtain
\begin{equation*}
\angle\vartheta(\nu(y_0),\nu'(y_0))\leq\theta_1(\theta_0)<\pi/2.
\end{equation*}

Let the universal constants $M(n,\lambda,\Lambda)>1$ and $\delta(n,\lambda,\Lambda,\theta_1)>0$ be given by Lemma \ref{lemma 4.0}. By (\ref{E4.85}), using
the Harnack inequality, we deduce that
\begin{equation}\label{E4.86}
\partial_{\tau(-\theta_0)}u_R\geq C(n,\lambda,\Lambda,\delta)\partial_{\tau(-\theta_0)}u_R(e_n)=C_2(n,\lambda,\Lambda,\theta_0)\epsilon_R\quad\mbox{in}\quad\Omega^*_{\delta},
\end{equation}
where the set $\Omega^*_{\delta}$ is defined as follows:
\begin{equation*}
\Omega^*_{\delta}:=\big\{x\in\mathcal{C}_{3l, 1}(y_0):\dist(x,\Omega^c(u_R))>\delta\big\}.
\end{equation*}

Let
\begin{equation*}
w:=\frac{M+2}{C_2\epsilon_R}\partial_{\tau(-\theta_0)}(u_R)-\frac{1}{C_1\epsilon_R}\partial_{n}u_R.
\end{equation*}
Then we have
\begin{equation*}
a^{ij}(x)\partial_{ij}w=0,\quad\mbox{in}\quad\mathcal{C}_{3l,1}(y_0)\backslash\Omega^c(u_R),
\end{equation*}
where $a^{ij}(x)=F_{ij}(D^{2}u_R(x))$. Combining (\ref{E4.83}) and (\ref{E4.86}), we find that
\begin{equation*}
w>M\quad\mbox{in}\quad\Omega^*_{\delta},\qquad w\geq -1\quad\mbox{in}\quad\mathcal{C}_{3l,1}(y_0).
\end{equation*}
Applying Lemma \ref{lemma 4.0}, we conclude that
\begin{equation*}
w(t\nu'(y_0)+y_0)\geq 0,\quad\mbox{for}\quad t\in(0,3l).
\end{equation*}
Hence
\begin{equation*}
\frac{M+2}{C_2}\partial_{\tau(-\theta_0)}(u_R)(y)\geq\frac{1}{C_1}\partial_{n}(u_R)(y).
\end{equation*}
Taking $N(n,\lambda,\Lambda,\theta_0)=C_1(M+2)/C_2$, we obtain (\ref{E4.84}).

 Consider $v:=N\partial_{\tau(-\theta_0)}u_R-\partial_{n}u_R$. Since
\begin{equation*}
\left\{\begin{split}
a^{ij}(x)\partial_{ij}v=0 &\quad\mbox{in}\quad\mathcal{C}_{3l,1}\cap\Omega(u_R),\\
v=0 &\quad\mbox{on}\quad \Gamma(u_R),\\
v\geq 0 &\quad\mbox{on}\quad\partial\mathcal{C}_{3l,1}\cap\Omega(u_R),\\
\end{split}\right.
\end{equation*}
 (\ref{E4.80}) follows from the comparison principle.

 \indent\textit{Step 2.2}
 Repeating the previous argument, we get (\ref{E4.81}) provided $\beta(e_n)\in(0,\pi/2).$

\indent\textit{Step 3. }
We prove that there exists a constant $\eta>0$, depending only on $n,\lambda,\Lambda,\theta_0$, such that either
 \begin{equation}\label{E4.91}
\partial_{\tau(-\theta_0-\eta)}u_R\geq 0\quad\mbox{in}\quad B_1,
\end{equation}
or
 \begin{equation}\label{E4.92}
\partial_{\tau(\theta_0+\eta)}u_R\geq 0\quad\mbox{in}\quad B_1,
\end{equation}
 for any $R>R_0$, where $R_0$ is given by (\ref{E4.11}).

\indent\textit{Step 3.1}
We verify (\ref{E4.91}) provided $\beta(e_n)\in(-\pi/2,0]$.

Fix $y\in B_1\cap\Omega(u_R),$ and let
\begin{equation*}
\tau\big(\beta(y)\big):=D_{T}u_R(y)/|D_{T}u_R(y)|.
\end{equation*}
Then $\beta(y)\in(-\pi/2,\pi/2)$ is implied by $\partial_nu_R(y)>0$.

First, we consider the case $\beta(y)\in(-\pi/2,0].$ Choosing $\eta\in(0,\pi/4-\theta_0/2)$,
i.e., $\angle\vartheta\Big(\tau\big(\beta(y)\big),\tau(-\theta_0-\eta)\Big)<\pi/2,$ we have
\begin{equation*}
 \partial_{\tau(-\theta_0-\eta)}u_R(y)>0.
\end{equation*}

Second, we consider the case $\beta(y)\in(0,\pi/2)$. By $\partial_{\tau(-\theta_0)}u_R(y)>0,$  we have
 \begin{equation*}
 \angle\vartheta\big(\tau\big(\beta(y)\big),\tau(-\theta_0)\big)=\beta(y)+\theta_0<\pi/2,
 \end{equation*}
i.e.,
  \begin{equation*}
  \beta(y)\in(0,\pi/2-\theta_0).
  \end{equation*}
 Using (\ref{E4.80}), we have
  \begin{equation*}
  \begin{split}
 |D_{T}u_R(y)|\cos\big(\beta(y)\big)&=\partial_{n}u_R(y)\leq N\partial_{\tau(-\theta_0)}u_R(y)\\
 &=N|D_{T}u_R(y)|\cos\big(\beta(y)+\theta_0\big).
 \end{split}
 \end{equation*}
In summary, we deduce that
 \begin{equation}\label{E4.90}
 N\cos\big(\beta(y)+\theta_0\big)\geq\cos\big(\beta(y)\big)\geq\cos\big(\pi/2-\theta_0\big)>0.
 \end{equation}

By (\ref{E4.90}), it not hard to realize that there exists $\eta_0=\eta_0(n,\lambda,\Lambda,\theta_0)>0$, such that
 \begin{equation*}
 \angle\vartheta\Big(\tau\big(\beta(y)\big),\tau(-\theta_0-\eta_0)\Big)=\beta(y)+\theta_0+\eta_0<\pi/2.
 \end{equation*}
 Taking $\eta=\min(\eta_0,\pi/4-\theta_0/2)$, then (\ref{E4.91}) follows.

\indent\textit{Step 3.2}
Repeating the previous argument, we get (\ref{E4.92}) provided $\beta(e_n)\in(\pi/2,\pi).$

 \indent\textit{Step 4. }
 For $R\gg R_0$, using (\ref{E4.91})-(\ref{E4.92}), we find that either
\begin{equation*}
0=u_R(0)\geq u_R\big(-\tau(-\theta_0-\eta)\big)=0,
\end{equation*}
  or
\begin{equation*}
0=u_R(0)\geq u_R\big(-\tau(\theta_0+\eta)\big)=0.
\end{equation*}
Then we observe that either $-\tau(-\theta_0-\eta)\in\Omega^{c}(u_R)$ or $-\tau(\theta_0+\eta)\in\Omega^{c}(u_R)$.

Taking $R\rightarrow+\infty,$ we deduce that
\begin{equation*}
\sup_{\tau\in V(u)}\angle\vartheta(-e_n,\tau)>\theta_0.
\end{equation*}
  However, this contradicts (\ref{E3.32}), as we wanted.
\end{proof}

\section{ Proof of Theorem \ref{thm:1} }

Let's begin with the following definition.

\begin{defn}
 \label{def:lower dimensional operators}
 Let $F$ be a convex operator in $\mathcal{S}_{n\times n}$ with $n\geq 2$. We define a convex operator $F_{n-1}$ by
\begin{equation*}
F_{n-1}(M):=F\begin{pmatrix} M & 0 \\ 0 & 0 \end{pmatrix}
,\qquad\forall M\in\mathcal{S}_{(n-1)\times (n-1)}.
\end{equation*}
\end{defn}

\begin{lemma}
 \label{le:4.4}
Assume all hypotheses of Corollary \ref{cor: 4.1} hold, and suppose further that $V(u)=-e_n$. Then
\begin{equation}\label{E3.7}
u_{n-1}(x'):=\lim_{x_n\rightarrow -\infty}u(x',x_n),
\end{equation}
is well defined. Furthermore, $u_{n-1}$ satisfies
\begin{equation}\label{EE4.21}
\left\{\begin{split}
F_{n-1}(D^2 u_{n-1})\leq 1\quad &\mbox{in}\quad\mathbb{R}^{n-1},\\
u_{n-1}\geq 0\quad&\mbox{in}\quad\mathbb{R}^{n-1},\\
F_{n-1}(D^2 u_{n-1})= 1\quad &\mbox{in}\quad \{u_{n-1}>0\},
\end{split}\right.
\end{equation}
in the viscosity sense.
\end{lemma}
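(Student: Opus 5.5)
Since $-e_n\in V(u)$, Lemma \ref{le:2.4} gives $u(x',x_n-t)\le u(x',x_n)$ for all $t\ge 0$. Hence for each fixed $x'$ the function $x_n\mapsto u(x',x_n)$ is nondecreasing. It is also nonnegative, so the limit in (\ref{E3.7}) exists pointwise and satisfies $0\le u_{n-1}\le u(\cdot,x_n)$. The plan is to upgrade this pointwise convergence to locally uniform convergence of a family of global solutions, and then pass to the limit in the equation.

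To this end we consider the translates $v_t(x):=u(x',x_n-t)$ for $t\to+\infty$. Each $v_t$ is a global solution of (\ref{E1.1}). Moreover $v_t$ is nonincreasing in $t$ and bounded above by $u$.

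We first need a locally uniform $C^{1,1}$ bound. Since $-\mathbb{R}_+e_n\subseteq\Omega^c(u)$ (we may assume $0\in\Omega^c(u)$), every ball $B_\rho(x)$ with $|x'|$ bounded contains coincidence points once $\rho$ is larger than the width of $\Omega^c(u)$ in the relevant region. Remark \ref{re:4.1} controls this width: $\Omega^c_t(u)$ has diameter $o(t)$, but more simply we have $\Omega^c(u)\supseteq -\mathbb{R}_+e_n$. For a point $x$ with $|x'|\le K$, the ball $B_{2(K+1)}(x)$ contains a point $(0,s)$ with $s\le 0$, as soon as $x_n\le K$. Applying the rescaled interior estimate (\ref{E2.12}) (Proposition \ref{pro:2.11}(2)) centered at that free/coincidence point gives $|D^2u|\le C$ in $B_{K+1}(x)$, uniformly. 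Together with $u(x)\le u(x',0)$ this yields uniform $C^{1,1}$ bounds for $v_t$ on compact sets. By Arzel\`a--Ascoli and monotonicity, $v_t\to \tilde u$ in $C^{1,\alpha}_{loc}(\mathbb{R}^n)$. The limit satisfies $\tilde u(x',x_n)=u_{n-1}(x')$, since it is the monotone limit of $u(x',x_n-t)$, which is independent of $x_n$.

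Stability of viscosity solutions of the obstacle problem under locally uniform convergence (as in \cite[Proposition 2.9]{Ca02}, already used in Step 3.1 of the proof of Theorem \ref{thm:5}) gives that $\tilde u$ solves (\ref{E1.1}). The subsolution part $F(D^2\tilde u)\le1$ passes to the limit directly. The equation $F(D^2\tilde u)=1$ on $\{\tilde u>0\}$ passes as well: if $\tilde u(x)>0$, then $v_t>0$ in a fixed neighborhood of $x$ for large $t$ by uniform convergence, so the $v_t$ solve $F=1$ there and the limit does too.

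The remaining step is to deduce (\ref{EE4.21}) from $\tilde u(x)=u_{n-1}(x')$. Take a test function $\varphi(x')$ touching $u_{n-1}$ from above (or below) at $x_0'$. Then $\Phi(x',x_n):=\varphi(x')$ touches $\tilde u$ at $(x_0',0)$, and
\[
D^2\Phi=\begin{pmatrix}D^2\varphi&0\\0&0\end{pmatrix}.
\]
Hence $F(D^2\Phi)=F_{n-1}(D^2\varphi)$, and the viscosity inequalities transfer verbatim. Nonnegativity of $u_{n-1}$ is clear.

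I expect the main obstacle to be the uniform local $C^{1,1}$ bound for the translates, i.e.\ guaranteeing a coincidence point within a bounded distance of every point of a compact set after translation. This is where $-e_n\in V(u)$, via the ray $-\mathbb{R}_+e_n\subseteq\Omega^c(u)$, is essential. Alternatively, one could use the global estimate of Proposition \ref{pro:2.11}(2) at scale $R$ around $0\in\Gamma$ together with nondegeneracy and convexity. Everything else consists of standard stability arguments.
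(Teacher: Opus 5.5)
Your proposal is correct and follows the paper's proof essentially step by step. Monotonicity from Lemma \ref{le:2.4} gives the limit. A $C^{1,1}$ bound gives $C^{1,\alpha}_{loc}$ convergence of the translates $u(x',x_n-k)$ to the $x_n$-independent function $u_{n-1}(x')$. Stability \cite[Proposition 2.9]{Ca02}, together with positivity of the translates near points where $u_{n-1}>0$, then yields (\ref{EE4.21}). The differences are cosmetic: the paper uses the global bound $\|D^2u\|_{L^\infty(\mathbb{R}^n)}\le C$ directly, which your rescaled interior estimate also gives, and you spell out the test-function transfer $F(D^2\Phi)=F_{n-1}(D^2\varphi)$ that the paper leaves implicit.
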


\begin{proof} Applying Lemma \ref{le:2.4}, we see that $\partial_n u\geq 0$ follows from $-e_n\in V(u)$. Hence the nonnegative function $u_{n-1}(x')$ is well defined.
Consider the family of $(n-1)$-dimensional functions $\{u_{n-1}^k\}$,
\begin{equation*}
u_{n-1}^k(x'):=u(x',-k),\qquad k\in\mathbb{Z}^+.
\end{equation*}
Using the interior $C^{1,1}$ estimate, we have
\begin{equation*}
||D^2 u_{n-1}^k||_{L^\infty(\mathbb{R}^{n-1})}\leq||D^2 u||_{L^\infty(\mathbb{R}^n)}\leq C(n,\lambda, \Lambda).
\end{equation*}
By (\ref{E3.7}), we have
\begin{equation}\label{E4.23}
u_{n-1}^k\rightarrow u_{n-1}\geq 0\qquad\mbox{in}\quad C^{1,\alpha}_{loc}(\mathbb{R}^{n-1}),\quad\forall\alpha\in (0,1).
\end{equation}

Let
\begin{equation}\label{E4.60}
v^k(x',x_n):=u(x',x_n-k),\qquad k\in Z_+.
\end{equation}
Given $\rho>0$, using $||D^2 v^k||_{L^\infty(B_{\rho})}\leq C(n,\lambda,\Lambda)$, we have
\begin{equation}\label{E4.62}
 v^k\rightarrow v,\qquad \mbox{in}\quad C^{1,\alpha/2}(B_{\rho}).
\end{equation}
Putting (\ref{E4.23}), (\ref{E4.60}) and (\ref{E4.62}) together, we obtain
\begin{equation*}
v(x',x_n)=u_{n-1}(x'),\qquad \mbox{in}\quad B_{\rho}(0).
\end{equation*}

Using \cite[Proposition 2.9]{Ca02}, we conclude that
\begin{equation*}
F_{n-1}(D^2_{x'}u_{n-1})=F(D^2_{x}v)=\lim_{k\rightarrow+\infty}F(D^2_{x}v^k)\leq 1, \quad\mbox{for}\quad |x'|<\rho,
\end{equation*}
in the viscosity sense. Therefore, the first equation in (\ref{EE4.21}) follows.

Next we verify the third equation in (\ref{EE4.21}). Fixing $P'\in\{u_{n-1}>0\}$, we set $d:=\mbox{dist}(P',\Omega^c(u_{n-1}))>0.$
By monotonicity of $u$, we have
\begin{equation*}
v^k(x',x_n)\geq u_{n-1}(x')>0,\quad \mbox{in}\quad B_{d}(P',0)\subseteq\mathbb{R}^n.
\end{equation*}
It follows that
\begin{equation*}
F_{n-1}(D^2_{x'}u_{n-1})=\lim_{k\rightarrow+\infty}F(D^2_{x}v^k)=1, \quad\mbox{in}\quad B'_{d/2}(P'),
\end{equation*}
in the viscosity sense.
\end{proof}

 Assume that $u$ is a global solution with $V(u)=-e_n$. The following corollary states that if $\mathrm{MD}\big(\Omega^c(u)\big)<+\infty$, then $\mathrm{MD}\big(\Omega^c(u_{n-1})\big)<+\infty$.

\begin{cor}
 \label{cor:4.3}
Suppose that all hypotheses of Lemma \ref{le:4.4} hold. If
\begin{equation}\label{E3.5}
\Omega^c(u)\subseteq\bigl\{x\in\mathbb{R}^{n}:|x_1|\leq 1\bigr\},
\end{equation}
 then
\begin{equation}\label{E3.6}
\Omega^c(u_{n-1})\subseteq\bigl\{x'\in\mathbb{R}^{n-1}:|x_1|\leq 1\bigr\},
\end{equation}
where $u_{n-1}$ is defined by (\ref{E3.7}). Moreover, $\operatorname{Int}\Omega^{c}(u_{n-1})\neq\emptyset.$
\end{cor}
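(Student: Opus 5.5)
The proof has two parts. First I show that $\operatorname{Int}\Omega^{c}(u_{n-1})\neq\emptyset$. Then I show that near any point $x'_0$ with $|x_{0,1}|>1$ the function $u_{n-1}$ solves $F_{n-1}=1$ in a full $(n-1)$-dimensional ball. Convexity of $\Omega^{c}(u_{n-1})$ then rules out coincidence points in $\{|x_1|>1\}$.

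\emph{Step 1: interior.} By Lemma \ref{le:2.4} and $-e_n\in V(u)$ we have $u(\cdot-te_n)\le u(\cdot)$ for $t\ge 0$. Hence $\Omega^{c}(u)$ is invariant under translations in the direction $-e_n$. By Corollary \ref{cor: 4.1}, $\operatorname{Int}\Omega^{c}(u)\neq\emptyset$, so there is a ball $B_\rho(z)\subseteq\Omega^{c}(u)$. For every $x'\in B'_\rho(z')$ and every $x_n\le z_n$ we then get $u(x',x_n)=0$. Letting $x_n\to-\infty$ in (\ref{E3.7}) gives $u_{n-1}\equiv 0$ on $B'_\rho(z')$, so $\operatorname{Int}\Omega^{c}(u_{n-1})\neq\emptyset$.

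\emph{Step 2: the equation holds off the slab.} Take $x'_0$ with $|x_{0,1}|>1$ and set $d:=|x_{0,1}|-1>0$. By (\ref{E3.5}), for every $x_n\in\mathbb{R}$ the ball $B_d(x'_0,x_n)$ lies in $\{|x_1|>1\}\subseteq\Omega(u)$. Thus $F(D^2u)=1$ there. Consider the translates $v^k(x',x_n)=u(x',x_n-k)$ as in (\ref{E4.60}). Each satisfies $F(D^2v^k)=1$ in $B_d(x'_0,0)$, and by the proof of Lemma \ref{le:4.4} they converge in $C^{1,\alpha}_{loc}$ to $v(x',x_n)=u_{n-1}(x')$. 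The stability of viscosity solutions (\cite[Proposition 2.9]{Ca02}) then gives
\begin{equation*}
F_{n-1}(D^2u_{n-1})=F(D^2 v)=1\qquad\mbox{in}\quad B'_{d}(x'_0)
\end{equation*}
in the viscosity sense. Since $F_{n-1}(0)=F(0)=0\neq 1$, the function $u_{n-1}$ cannot vanish identically on any open subset of $B'_d(x'_0)$. Therefore $\Omega^{c}(u_{n-1})\cap B'_d(x'_0)$ has empty interior.

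\emph{Step 3: conclusion via convexity.} The function $u_{n-1}$ is a pointwise limit of convex functions, hence convex and nonnegative. So $K':=\Omega^{c}(u_{n-1})$ is a closed convex set, and by Step 1 it has nonempty interior. Suppose some $x'_0\in K'$ had $|x_{0,1}|>1$. A closed convex set with nonempty interior is the closure of its interior, so there would be interior points of $K'$ inside $B'_d(x'_0)$. This contradicts Step 2, which proves (\ref{E3.6}).

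The only delicate point is Step 2: one must check that the limiting equation holds with equality, and not merely as $F_{n-1}\le 1$, on a whole ball around $x'_0$. This is exactly why the slab hypothesis (\ref{E3.5}) is needed: it gives the uniform distance $d$ from the lines $\{x'_0\}\times\mathbb{R}$ to $\Omega^{c}(u)$. Once that is in place, stability of viscosity solutions under $C^{1,\alpha}_{loc}$ limits finishes the step.
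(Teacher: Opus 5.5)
Your proof is correct. Steps 1 and 3 match the paper: the paper phrases Step 3 as ``$\Omega^c(u_{n-1})$ is a locally Lipschitz domain by convexity, so it suffices to prove $\operatorname{Int}\Omega^c(u_{n-1})\subseteq\{|x_1|\le 1\}$''. Your Step 2, however, uses a different mechanism.

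The paper argues by nondegeneracy. It assumes $B'_{2\rho}(P')\subseteq\Omega^c(u_{n-1})\cap\{x_1>1\}$ and notes that $(P',-k)\in\Omega(u)$ by (\ref{E3.5}). Proposition~\ref{pro:2.11}(1) then gives points $Q_k$ near $(P',-k)$ with $u(Q_k)\ge C\rho^2$. Passing to the limit produces $Q'\in\overline{B'_\rho}(P')$ with $u_{n-1}(Q')\ge C\rho^2>0$, a contradiction.

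You instead pass the equation itself to the limit. Hypothesis (\ref{E3.5}) places the whole cylinder $B'_d(x'_0)\times\mathbb{R}$ inside $\Omega(u)$. The compactness-plus-stability argument from the proof of Lemma~\ref{le:4.4} then gives $F_{n-1}(D^2u_{n-1})=1$ on $B'_d(x'_0)$. Finally (H2), i.e.\ $F_{n-1}(0)=0\neq 1$, rules out $u_{n-1}\equiv 0$ on any open subset.

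What each buys: your route avoids nondegeneracy entirely. It uses only the uniform $C^{1,1}$ bound, stability of viscosity solutions, and (H2), all already in play in Lemma~\ref{le:4.4}, and it identifies the limiting equation on all of $\{|x_1|>1\}$. The paper's route is quantitative: it gives $\sup_{\overline{B'_\rho}(P')}u_{n-1}\ge C\rho^2$ for every $P'$ with $|P'_1|>1$. That extra information is not needed for the corollary.
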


\begin{proof}
By $\mathrm{Int}\,\Omega^{c}(u)\neq\emptyset$ (see Corollary \ref{cor: 4.1}), we may pick $Z(z',z_n)\in$ $\mathrm{Int}\,\Omega^{c}(u)$ such that $B_{\rho}(Z)\subset\Omega^{c}(u)$,
where $\rho>0$ is a small constant. By the monotonicity of $u$, we find that
\begin{equation*}
u=0\quad\mbox{in}\quad B'_{\rho}(z')\times(-\infty,z_n).
\end{equation*}
Using $B'_{\rho}(z')\subseteq\Omega^c(u_{n-1})$, we observe that $\mathrm{Int}\,\Omega^{c}(u_{n-1})\neq\emptyset.$

Since $\Omega^c(u_{n-1})$ is a locally Lipschitz domain, which follows from the convexity of $\Omega^c(u_{n-1})$, (\ref{E3.6}) is a direct consequence of
\begin{equation}\label{E4.13}
\mathrm{Int}\,\Omega^c(u_{n-1})\subseteq \{|x_1|\leq 1\}.
\end{equation}

If (\ref{E4.13}) fails, then we can choose $P'(P'_1,\cdots,P'_{n-1})\in$ $\mathrm{Int}\,\Omega^c(u_{n-1})$ with $P'_1>1$. Taking $\rho\in (0,1)$ small if necessary, we have
\begin{equation}\label{E4.16}
B'_{2\rho}(P')\subseteq\Omega^c(u_{n-1})\cap\{x_1>1\}.
\end{equation}

By (\ref{E3.5}), we find that $P_{k}(P',-k)\in\Omega(u)$. Then we can choose a sequence $Q_k(Q_k',Q_{k,n})\in\Omega(u)$ such that
\begin{equation*}
u(Q_k)=\sup_{B_{\rho}(P_{k})}u\geq C(n,\lambda,\Lambda)\rho^2,\quad k=1,2,\cdots,
\end{equation*}
where we use the nondegeneracy condition. Up to a subsequence, we may assume that $Q'_k\rightarrow Q'\in\overline{B'_{\rho}}(P')$. Since $Q_{k,n}\rightarrow-\infty,$ we have
\begin{equation*}
u_{n-1}(Q')=\lim_{k\rightarrow +\infty}u(Q',Q_{k,n})=\lim_{k\rightarrow +\infty}u(Q_k',Q_{k,n})\geq C\rho^2.
\end{equation*}
However, this contradicts (\ref{E4.16}), as we wanted. Thus (\ref{E4.13}) follows.
\end{proof}

\begin{lemma}
 \label{lemma 4.9}
Suppose that all hypotheses of Lemma \ref{le:4.4} hold. Then
\begin{equation}\label{E4.98}
P+\mathbb{R}_+e_n\subseteq\Omega(u),\qquad\forall P\in\Gamma(u).
\end{equation}
\end{lemma}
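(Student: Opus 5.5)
We have to show that for any free boundary point $P$, the whole open ray $P+te_n$, $t>0$, lies in the positivity set $\Omega(u)$.

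\textbf{Step 1: monotonicity and strict positivity of $\partial_n u$.} By Lemma \ref{le:2.4} and $-e_n\in V(u)$, we have $\partial_n u\geq 0$ in $\mathbb{R}^n$. Hence $t\mapsto u(P+te_n)$ is nondecreasing. It therefore suffices to show that $u(P+te_n)>0$ for small $t>0$: once the function is positive at some $t_0$, monotonicity gives positivity for every $t\ge t_0$, and the small values of $t$ are exactly what the local argument below covers.

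Next, $w:=\partial_n u$ solves the linearized equation (\ref{E6.10}) in $\Omega(u)$, with $w\ge0$. By the strong maximum principle, on each connected component of $\Omega(u)$ either $w>0$ or $w\equiv0$. If $w\equiv 0$ on a component $U$, then $u$ is constant along $e_n$-lines inside $U$. Following such a line downward, it stays in $U$ as long as $u>0$, and $u$ stays equal to its positive value at the starting point; so the line never meets $\Omega^c(u)$. Since $\Omega^c(u)$ is convex with nonempty interior (Corollary \ref{cor: 4.1}), $\Omega(u)$ is connected when $n\ge2$, so $U=\Omega(u)$ and $u$ would be translation invariant in $e_n$ everywhere, i.e.\ lower dimensional. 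This is excluded by hypothesis. Hence $\partial_n u>0$ in $\Omega(u)$.

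\textbf{Step 2: local argument at $P$.} Fix $P\in\Gamma(u)$. By Remark \ref{re:2.13}, $\Gamma(u)$ is a $C^{1,\alpha}$ graph near $P$, with outward unit normal $\nu(P)$ to the convex set $\Omega^c(u)$. Since $-e_n\in V(u)$, the ray $P-\mathbb{R}_+e_n$ lies in $\Omega^c(u)$ (as in the proof of Lemma \ref{le:2.6}). This forces $\nu(P)\cdot e_n\ge0$.

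\emph{Case $\nu(P)\cdot e_n>0$.} The direction $e_n$ points strictly out of the convex set $\Omega^c(u)$, so $P+te_n\notin\Omega^c(u)$ for $t>0$ by the supporting hyperplane property. We are done in this case.

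\emph{Case $\nu(P)\cdot e_n=0$.} This is the main obstacle. Here the vertical line through $P$ is tangent to $\Gamma(u)$, and we must rule out that the segment $P+te_n$, $t\in(0,t_0)$, stays in $\Omega^c(u)$.

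Suppose instead that $P+te_n\in\Omega^c(u)$ for some $t>0$. By convexity, and since the ray $P-\mathbb{R}_+e_n$ is also contained in $\Omega^c(u)$, the whole vertical line $P+\mathbb{R}e_n$ then lies in $\partial\Omega^c(u)$: it is contained in $\Omega^c(u)$ and in the supporting hyperplane $L_P$. Consequently $u$ vanishes on this line, together with $\nabla u$.

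Now apply the Hopf argument of Lemma \ref{lemma 3.2}. Take a ball $B_\rho(z)\subset\Omega(u)$ tangent to $\Gamma(u)$ at $P$; one exists by the $C^{1,\alpha}$ regularity and convexity. In this ball, $\partial_n u>0$ solves a linear equation with bounded measurable coefficients and vanishes at $P$. Hopf's lemma then gives $\partial_\nu\partial_n u(P)>0$.

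On the other hand, $\partial_n u\equiv0$ along the vertical line through $P$ and on the flat piece of $\Gamma(u)$. I would try to show this contradicts the $C^{1,\alpha}$ description of $u$ near regular points, namely $u\approx \frac{\gamma}{2}((x-P)\cdot\nu)_+^2$ after blow-up at $P$. In that blow-up, the tangential derivative $\partial_n u$ is $o(|x-P|)$, which contradicts the linear growth forced by Hopf's lemma.

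If the blow-up route proves delicate, the fallback is the following. Since $\partial_n u(P+te_n)=0$ for all $t$, and $u$ is monotone in $x_n$, $u$ vanishes on the whole vertical line. Letting $x_n\to+\infty$ along it would then contradict the growth of $u$ implied by Theorem \ref{thm:thm4.1} together with nondegeneracy (Proposition \ref{pro:2.11}). Indeed, $V(u)=\{-e_n\}$ forbids a zero ray in the direction $+e_n$, because $e_n\notin V(u)$.

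Actually this fallback is the clean route and I would use it first: the line $P+\mathbb{R}_+e_n\subseteq\Omega^c(u)$ would give $e_n\in V(u)$, contradicting $V(u)=\{-e_n\}$.
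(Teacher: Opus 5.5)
\textbf{There is a genuine gap in the route you commit to.} The ``fallback'' you finally call the clean route does not work.

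From $P+te_n\in\Omega^c(u)$ for a single $t>0$, together with $P-\mathbb{R}_+e_n\subseteq\Omega^c(u)$, convexity only gives $P+(-\infty,t]e_n\subseteq\Omega^c(u)$. It does not give the whole line $P+\mathbb{R}e_n$: the monotonicity $\partial_n u\ge 0$ propagates zeros downward, never upward. Likewise, the assertion $\partial_n u(P+te_n)=0$ for all $t$ is circular, since $\nabla u$ vanishes only where $u$ does.

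The negation of the lemma reduces exactly to a \emph{bounded} vertical segment $P+[0,t_1]e_n\subseteq\Gamma(u)$. This is the paper's starting point. Such a segment puts no new direction into $V(u)$, so $V(u)=\{-e_n\}$ yields no contradiction; ruling out the segment requires PDE input. The paper works at the midpoint $Q=P+e_n/2$, where the normal $\nu$ is orthogonal to $e_n$. It notes $\partial_n(\partial_\nu u)(Q)=0$ because $\nabla u\equiv 0$ on the segment. It then interchanges derivatives, using up-to-the-boundary $C^{1,\alpha}$ regularity of $\partial_n u$ near the $C^{1,\alpha}$ free boundary. This contradicts Hopf's lemma for the positive solution $\partial_n u$ of (\ref{E6.10}).

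\textbf{Your first, tentative idea is sound and should be the proof.} It even shows that Case 2 never occurs, so you need not assume $P+te_n\in\Omega^c(u)$ at all. Let $\nu=\nu(P)$ with $\nu\cdot e_n=0$.

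(i) You need $\partial_n u>0$ in $\Omega(u)$. Your Step 1 gives this, with one correction: the complement of a convex body need not be connected (a slab is a counterexample). It is connected here because $V(u)=\{-e_n\}$ forbids $\Omega^c(u)$ from containing a line.

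(ii) By convexity, $B_\rho(P+\rho\nu)\subseteq\{(x-P)\cdot\nu>0\}\subseteq\Omega(u)$. Hopf's lemma then gives $\partial_n u(P+s\nu)\ge cs$ for small $s>0$.

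(iii) By the uniform $C^{1,1}$ bound, $u_r(x)=r^{-2}u(P+rx)$ subconverges in $C^{1,\alpha}_{loc}$ to a global solution $u_0$. The sets $(\Omega^c(u)-P)/r$ increase as $r\downarrow 0$, and their union is a convex cone whose closure is the tangent cone of $\Omega^c(u)$ at $P$. Since $\Gamma(u)$ is $C^1$ at $P$, this tangent cone is $H=\{x\cdot\nu\le 0\}$, so $u_0=0$ on $H$. Then $\delta_r(u_0,0)\ge 1$ for every $r$, and Proposition \ref{pro:2.12}(2) forces $u_0=\frac{\gamma}{2}((x\cdot\nu)_+)^2$. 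Hence $r^{-1}\partial_n u(P+r\nu)=\partial_n u_r(\nu)\to\gamma\,\nu\cdot e_n=0$, which contradicts (ii). Case 1 then finishes the proof.

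Compared with the paper, this route trades the boundary derivative interchange, and the boundary regularity result behind it, for the identification of the blow-up at $P$, which Proposition \ref{pro:2.12} supplies directly. Both routes hinge on the same Hopf lemma for $\partial_n u$.
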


\begin{proof}
On the contrary, we may suppose that $P\in\Gamma(u)$, and $P+e_n\in\Omega^c(u)$. Using $\partial_n u\geq 0$, we have $P+se_n\in\Omega^c(u)$ for all $s\in[0,1]$. We claim that
 \begin{equation}\label{E5.11}
P+se_n\in\Gamma(u),\quad\mbox{for all}\quad s\in[0,1].
\end{equation}

Indeed, if $P+s_0e_n\in$ $\mathrm{Int}\,\Omega^c(u)$ for some $s_0\in[0,1]$, then we take \(\rho>0\) small so that $B_{\rho}(P+s_0e_n)\subseteq\Omega^c(u)$. Using
$\partial_n u\geq 0$ again, we have $u\equiv 0$ in $B_{\rho}(P)$. This contradicts $P\in\Gamma(u)$.

 Using (\ref{E5.11}), one can check that
 \begin{equation*}
 \partial_{n}(\partial_{\nu}u)(P+e_n/2)=0,
 \end{equation*}
  where $\nu\in\partial B'_1$ denotes the unit outward normal to $\Omega^c(u)$ at $P+e_n/2$. We take $r>0$ small so that $\Omega(u)\cap B_{r}(P+e_n/2)$ is a $C^{1,\alpha}$ domain
  (see Proposition \ref{pro:2.11}). By \cite[Theorem 1.2]{LZ}, we deduce that $\partial_n u\in C^{1,\alpha}\Big(\Omega(u)\cap B_{r}(P+e_n/2)\Big)$. In summary, we have
\begin{equation*}
\partial_{\nu}(\partial_{n}u)(P+e_n/2)=\partial_{n}(\partial_{\nu}u)(P+e_n/2)=0.
\end{equation*}

However, since $\partial_n u$ is a positive solution to (\ref{E6.10}), the Hopf lemma implies that
\begin{equation*}
\partial_{\nu}(\partial_n u)(P+e_n/2)>0,
\end{equation*}
 which yields the desired contradiction.
\end{proof}

\begin{thm}
 \label{le:4.1}
Suppose that all hypotheses of Corollary \ref{cor: 4.1} hold. Then
\begin{equation*}
\mathrm{MD}(\Omega^{c}(u))=+\infty.
\end{equation*}
\end{thm}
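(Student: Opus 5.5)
We argue by contradiction and induct on the dimension $n\geq 2$. Suppose $\mathrm{MD}(\Omega^{c}(u))<+\infty$. By Theorem \ref{thm:thm4.1}, $V(u)$ consists of a single direction, which after a rotation is $-e_n$. Then Lemma \ref{le:4.4} applies, and the limit $u_{n-1}(x')=\lim_{x_n\to-\infty}u(x',x_n)$ is a global solution of the $(n-1)$-dimensional obstacle problem (\ref{EE4.21}) for the convex operator $F_{n-1}$. Since the slab containing $\Omega^c(u)$ must contain the ray $-\mathbb{R}_+e_n\subseteq\Omega^c(u)$, its normal is orthogonal to $e_n$. After a rotation in the $x'$ variables and a rescaling we may therefore assume (\ref{E3.5}). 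Corollary \ref{cor:4.3} then gives that $\Omega^c(u_{n-1})$ lies in the slab $\{|x_1|\leq1\}\subset\mathbb{R}^{n-1}$ and has nonempty interior.

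The next step is to classify $u_{n-1}$ using the already established structure in dimension $n-1$, and to rule out every option. Since $\mathrm{Int}\,\Omega^c(u_{n-1})\neq\emptyset$ and the coincidence set lies in a slab, $u_{n-1}$ is not a half-space solution. It is not of type (i) either, since that type has empty interior.

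\begin{itemize}
\item If $n-1=1$, then $\Omega^c(u_{n-1})$ is a bounded interval $[a,b]$, and $u_{n-1}$ has the form (\ref{E1.6}).
\item If $n-1\geq2$, then either $\Omega^c(u_{n-1})$ is compact, or $u_{n-1}$ is lower dimensional, or $u_{n-1}\in\mathcal{G}_\infty$. The case $\mathcal{G}_\infty$ is excluded by the induction hypothesis, which gives $\mathrm{MD}=+\infty$. In the lower dimensional case we factor out the invariant directions, which are also invariant directions of $u$ by monotone convergence and convexity, and reduce to the previous cases.
\end{itemize}

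In every surviving case, therefore, $\Omega^c(u_{n-1})$ is a compact convex set $K'$ with nonempty interior, possibly times invariant directions.

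The heart of the argument, and the main obstacle, is to show that a compact limit slice is incompatible with the geometry of $u$. Since $\partial_nu\geq0$, the slices $\Omega^c_\rho(u)$ increase as $\rho\to+\infty$, and they exhaust $\mathrm{Int}\,K'$ up to boundary. I would blow down: $u_R\to$ a homogeneous degree-two convex global solution. The limit is independent of $x_n$ near $-\infty$ and vanishes on the ray, so its coincidence set is at least $-\mathbb{R}_+e_n$. On the other hand, the slices $\Omega^c_\rho(u)\subseteq K'$ stay bounded, so the blow-down coincidence set is exactly the ray, i.e. it has empty interior. The blow-down is then a quadratic polynomial $\frac12x^TBx$ that vanishes on the ray, so $Be_n=0$, and it is a global solution.

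To obtain the contradiction I would compare $u$ with the translates $u(\cdot-te_n)$, which converge to $u_{n-1}$. Lemma \ref{lemma 4.9} shows that free boundary points move strictly upward in $\Omega(u)$. Combining this with Lemma \ref{lemma 3.2}/Hopf-type arguments applied to the cylinder $K'\times\mathbb{R}$, I expect that the function $w=u_{n-1}-u\geq0$ satisfies the linearized equation in $\Omega(u_{n-1})\times\mathbb{R}$ and vanishes on the free boundary of the cylinder at infinity. The boundary Harnack/Hopf estimate (Lemma \ref{le:2.2}) should then force $\partial_n u$ to be comparable to its values uniformly along $x_n\to-\infty$. That is incompatible with $\partial_n u\to0$, which follows from the convergence to $u_{n-1}$. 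Getting this quantitative contradiction uniformly along the unbounded cylinder is the hardest point: I would try a barrier of the form $c\,e^{\mu x_n}\phi(x')$ adapted to $K'$, using that the convex slices increase to $K'$.
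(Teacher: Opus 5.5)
\textbf{There is a genuine gap at the decisive step.} Your preliminary reductions agree with Step 1 of the paper's proof: arguing by contradiction, $V(u)=\{-e_n\}$ via Theorem \ref{thm:thm4.1}, the slab normal orthogonal to $e_n$, rescaling to (\ref{E3.5}), Corollary \ref{cor:4.3}, and an induction down to a limit profile with compact coincidence set. The gap is exactly the step you call the heart of the argument. You never derive a contradiction from the compactness of $K'$, and the mechanism you sketch cannot work as stated.

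First, the sign is reversed. Since $\partial_nu\geq 0$, you have $u\geq u_{n-1}$, so $u_{n-1}-u\leq 0$.

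More importantly, nothing you invoke distinguishes the slab case from a genuine $\mathcal{G}_\infty$ solution. In both situations the blow-down has the form $\frac12x'^TA'x'$ with $F_{n-1}(A')=1$. Moreover, $\partial_nu(x',x_n)\to 0$ as $x_n\to-\infty$ holds for every solution with $V(u)=\{-e_n\}$, because the translates converge in $C^{1,\alpha}_{loc}$ to the $x_n$-independent function $u_{n-1}$. So decay of $\partial_nu$ is not something you can contradict. You would need a quantitative lower bound that really uses the compactness of $K'$, and none is given. A barrier $ce^{\mu x_n}\phi(x')$ only yields lower bounds that themselves decay as $x_n\to-\infty$, which is consistent with everything.

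\textbf{The missing idea: use the asymptotics of the compact profile together with an exact solution that is linear in $x_n$ as a barrier.}

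For $n\geq 3$, Theorem \ref{thm:5} in dimension $n-1$ gives $u_{n-1}\geq\frac12x'^TAx'+Bx'-M$ (with $-M\log|x'|$ when $n=3$), where $F_{n-1}(A)=1$.
\begin{itemize}
\item The function $v_R=\frac12x'^TAx'+Bx'+R^{-1/4}x_n+1$ solves $F(D^2v_R)=1$.
\item $v_R$ lies below $u$ on the bottom face of $\mathcal{C}_{R,R}$, yet $v_R(0)=1>0=u(0)$. Proposition \ref{pro:2.16} therefore yields a boundary point $P$ with $u(P)\leq v_R(P)$.
\item The mean value theorem on a vertical segment of length $R/2$ below $P$, together with $u\geq u_{n-1}$, gives $\partial_nu\leq 4R^{-1/4}$ at a point far from $\Omega^c(u)$.
\item Harnack plus $\partial_{nn}u\geq 0$ give $\sup_{B_1}\partial_nu\leq CR^{-1/4}$. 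This uses convexity of $u$, which you never exploit.
\item Hence $\partial_nu(e_n)=0$, so $\partial_nu\equiv 0$. Then $u$ is lower dimensional, contradicting the hypotheses of Corollary \ref{cor: 4.1}.
\end{itemize}

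For $n=2$ this barrier fails, because $u_1-\frac{\gamma}{2}x_1^2$ grows linearly. The paper instead argues as follows.
\begin{itemize}
\item It writes the free boundary as a concave graph $f$ with $f(+\infty)=1$, so $f'(t)\leq 2\epsilon/t$.
\item This gives $\partial_1\partial_2u(P)\leq C\epsilon/|P_2|$ at free boundary points.
\item Lemma \ref{le:2.2} gives the Hopf bound $\partial_\nu(\partial_2u)(P)\geq C\,\partial_2u(P+\rho\nu)/\rho$.
\item Taking $\rho=|P_2|$ forces $\partial_2u\equiv 0$.
\end{itemize}
Your proposal has no counterpart to either argument.

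\textbf{Your treatment of a lower-dimensional $u_{n-1}$ is also unjustified.} Invariance of $u_{n-1}$ in a direction does not pass to $u$ by convexity and monotone convergence alone. For example, $x_1^2+(x_3+|x_2|)_+^2$ is convex and nondecreasing in $x_3$, and tends to $x_1^2$ as $x_3\to-\infty$, yet it is not invariant in $x_2$. The paper avoids this transfer: it reruns the same barrier argument using the asymptotic profile of the compact factor of $u_{n-1}$.
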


\begin{proof}
We divide the proof into four steps.

\indent\textit{Step 1. }
Suppose on the contrary that $\mathrm{MD}(\Omega^{c}(u))<+\infty.$ We aim to show that $u$ is a lower dimensional global solution, which yields a contradiction.

Applying Theorem \ref{thm:thm4.1}, we can assume that $V(u)=\{\tau\}$. By a rescaling argument, we can assume that (\ref{E3.5}) holds. By Remark \ref{re:4.1}, we may also assume that (\ref{E6.3}) holds. Hence $\partial_n u\geq 0$. Applying Lemma \ref{le:4.4} and Corollary \ref{cor:4.3}, we see that $u_{n-1}$ is a global solution satisfying (\ref{E3.6}). Then $u_{n-1}\in\mathcal{G}_c$ provided $V(u_{n-1})=\emptyset$. If $V(u_{n-1})\neq\emptyset$, then without loss of generality we may assume that $-e_{n-1}\in V(u_{n-1}).$ Let
\begin{equation*}
u_{n-2}:=\lim_{x_{n-1}\rightarrow -\infty}u_{n-1}(x_1,\cdots,x_{n-1}).
\end{equation*}
Then $u_{n-2}$ is a global solution.

Repeating this procedure, we deduce that $u_{n-k}\in\mathcal{G}_c$ for some $1\leq k\leq n-1$. Furthermore, we have
 \begin{equation*}
 \Omega^c(u_{n-k})\subseteq\{|x_1|\leq 1\},\qquad\mathrm{Int}\,\Omega^{c}(u_{n-k})\neq\emptyset.
 \end{equation*}

\indent\textit{Step 2.}
Assume that $u_{n-1}\in\mathcal{G}_c$. we will prove that $\partial_n u\equiv 0$ when $n\geq 3.$

By Theorem \ref{thm:5}, we may select a constant $M>0$ such that
\begin{equation}\label{E5.5}
u_{n-1}\geq\left\{\begin{split}
&\frac{1}{2}x'^T A x'+Bx'-M,\quad n\geq 4,\\
&\frac{1}{2}x'^T Ax'+Bx'-M\log|x'|,\quad n=3,
\end{split}\right.\qquad\mbox{as}\quad |x'|\rightarrow+\infty.
\end{equation}
Consider
\begin{equation*}
v_R(x',x_n):=\frac{1}{2}x'^TAx'+Bx'+R^{-1/4}x_n+1,
\end{equation*}
where $R>0$ is a constant to be chosen later.

We claim that there exists $R_0>1$ such that, for all $R>R_0$,
\begin{equation}\label{E5.7}
v_R(x)\leq u(x)\qquad \mbox{on}\quad B_{R}'(-Re_n).
\end{equation}
This claim can be seen as follows. Taking $R_0$ sufficient large, we find that
\begin{equation*}
\begin{split}
u(x',-R)-v_R(x',-R)&\geq u_{n-1}(x')-v_R(x',-R)\\
&\geq -M\log R+R^{3/4}-1>0,
\end{split}
\end{equation*}
 for $|x'|\in(R^{1/4},R)$, where we use the monotonicity of $u$. By
 \begin{equation*}
\frac{1}{2}x'^TAx'+Bx'+1\leq(\mbox{trace}A)R^{1/2},\qquad \mbox{for}\quad |x'|\leq R^{1/4},
\end{equation*}
we have
\begin{equation*}
u(x',-R)-v_R(x',-R)\geq -v_R(x',-R)\geq R^{3/4}-\mbox{trace}(A)R^{1/2}>0.
\end{equation*}
This yields (\ref{E5.7}).

Fix $R>R_0$. We show that there exists $P(P', P_n)\in\partial\mathcal{C}_{R,R}(0)\backslash B_{R}'(-Re_n)$ such that
\begin{equation}\label{E5.16}
u(P)\leq v_R(P).
\end{equation}

 Indeed, if (\ref{E5.16}) fails, by (\ref{E5.7}), then we have
\begin{equation*}
\left\{\begin{split}
F(D^{2}v_R)=1  &\geq F(D^{2}u) \quad\mbox{in}\quad \mathcal{C}_{R,R},\\
v_R&\leq u\quad \mbox{on}\quad \partial \mathcal{C}_{R,R}.
\end{split}\right.
\end{equation*}
Then $v_R\leq u$ follows from Proposition \ref{pro:2.16}.

On the other hand, we observe that $u(0)=0<1=v_R(0),$ a contradiction. Hence (\ref{E5.16}) holds.

Let $P^*:=(P', P_n-R/2)$. Combining (\ref{E5.16}) and $\partial_n u\geq 0$, we deduce that there exists $P_{\xi}(P',\xi)\in\overline{P^*P}$ such that
\begin{equation}\label{E5.18}
\partial_n u(P_{\xi})=\frac{u(P)-u(P^*)}{R/2}\leq\frac{v_R(P)-u_{n-1}(P')}{R/2}\leq 4R^{-1/4},
\end{equation}
where $\overline{P^*P}$ denotes the line segment connecting $P^*$ and $P$. Taking $R\gg 1$, and recalling $V(u)=-e_n$, it is readily seen that
\begin{equation*}
\dist(z,\Omega^{c}(u))\geq\frac{R}{2},\quad\mbox{for all}\quad z\in\overline{P^*P}.
\end{equation*}
By (\ref{E5.18}), we have
\begin{equation*}
\sup_{B'_{1}(Re_n)}\partial_{n}u\leq C(n,\lambda,\Lambda)\partial_n u(P_{\xi})<C(n,\lambda,\Lambda)R^{-1/4},
\end{equation*}
where we use the Harnack inequality.

Since $u$ is convex, we deduce that
\begin{equation*}
\sup_{B_1}\partial_{n}u\leq\sup_{B'_{1}(Re_n)}\partial_{n}u<C(n,\lambda,\Lambda)R^{-1/4}.
\end{equation*}
Passing to the limit $R\rightarrow+\infty$, we then get \(\partial_nu(e_n)=0\). As $\partial_nu$ is a nonnegative solution of (\ref{E6.10}),
it follows that $\partial_n u\equiv 0$. In other words, $u$ is a lower dimensional global solution, which gives a contradiction.

\indent\textit{Step 3.}
Assuming that $u_{n-1}\in\mathcal{G}_c$, we will prove that $\partial_n u\equiv 0$ when $n=2.$

\indent\textit{Step 3.1}
Without loss of generality, we can assume that
\begin{equation}\label{E4.95}
u_1(x)=\frac{\gamma}{2}[(x+1)_{-}^{2}+(x-1)_{+}^{2}].
\end{equation}

Now define a function $f:\mathbb{R}_+\rightarrow[0,1)$ as follows:
\begin{equation*}
f(-x_2):=x_1,\qquad \forall\quad (x_1,x_2)\in\Gamma(u)\cap\{x_1\geq 0\}.
\end{equation*}
By (\ref{E4.95}), we find that $f(+\infty)=1$. The inclusion $f\in C^{1}(\mathbb{R}_+)$ follows from the local $C^{1}$ regularity of $\Gamma(u)$. The inequality $f'\geq 0$
is implied by $\partial_2 u\geq 0$, and (\ref{E4.98}) yields $f'>0$. Using the convexity of $\Omega^{c}(u)$, we conclude that $f$ is concave. In summary, we obtain
\begin{equation*}
f(0)=0,\quad f(+\infty)=1,\quad f'(t)>0,\quad f\quad\mbox{is concave}.
\end{equation*}

For sufficiently small $\epsilon>0$, there exists $R(\epsilon)\gg 1$ such that
\begin{equation*}
\epsilon>f(t)-f(t/2)=f'(\xi)\frac{t}{2},\qquad\forall t>R(\epsilon).
\end{equation*}
Since $f$ is concave, we have
\begin{equation}\label{E5.39}
f'(t)\leq f'(\xi)\leq 2\epsilon/t, \qquad \forall t>R(\epsilon).
\end{equation}

Given $\epsilon>0,$ let $R(\epsilon)$ be given by (\ref{E5.39}). Let $P(P_{1},P_{2})\in\Gamma(u)\cap\{x_1>0\}$. Assuming that $|P_{2}|>R(\epsilon)$, we claim that
\begin{equation}\label{E5.41}
0\leq\partial_{1}\partial_{2}u(P)<C(\lambda,\Lambda)\epsilon/|P_{2}|.
\end{equation}

This claim is justified as follows. Let $\delta>0$ be a small constant to be chosen later. Applying Lemma \ref{lemma 4.9}, we have $P^{\delta}(P_1,P_2+\delta)\in\Omega(u)$
since $P\in\Gamma(u)$. Recall that $L_P$ denotes the supporting line to $\Omega^c(u)$ at $P$.

We can pick $P^*(P^*_1,P_2+\delta)\in\Gamma(u)\cap\{x_1>0\}$ and $Q(Q_1,P_2+\delta)\in L_P$. By (\ref{E5.39}), we deduce that
\begin{equation*}
\begin{split}
|P_1-P^*_1|&\leq\dist(P^{\delta},Q)+\dist(Q,P^*)\\
&= f'(|P_2|)\delta+o(\delta)\leq 3\epsilon\delta/|P_2|,
\end{split}
\end{equation*}
provided that $\delta>0$ is sufficiently small.

Using the convexity of $u$ and the interior $C^{1,1}$ estimate, we have
\begin{equation*}
\begin{split}
0&\leq\partial_1 u(P^{\delta})=\partial_1 u(P^{\delta})-\partial_1 u(P^*)=\partial_{11}u(\xi)(P_1-P^*_1)\\
&<3||D^{2}u||_{L^{\infty}(\mathbb{R}^{2})}\epsilon\delta/|P_2|<C(\lambda,\Lambda)\epsilon\delta/|P_2|.
\end{split}
\end{equation*}
Combining these estimates, we have
\begin{equation*}
\partial_{2}\partial_{1}u(P)=\lim_{\delta\rightarrow 0^+}\frac{\partial_{1}u(P^{\delta})-\partial_{1}u(P)}{\delta}\leq\frac{C\epsilon}{|P_2|}.
\end{equation*}

In Lemma \ref{lemma 4.9}, we established that $\partial_2 u\in C^{1,\alpha}\big(\Omega(u)\cap B_{r}(P)\big)$ for some \(r>0\). Hence (\ref{E5.41}) from the property that \(\partial_{1}\partial_{2}u(P)=\partial_{2}\partial_{1}u(P)\).

\indent\textit{Step 3.2}
Let $P(P_{1},P_{2})\in\Gamma(u)\cap\{x_1>0\}$. We claim that
\begin{equation}\label{E5.43}
\partial_1\partial_{2}u(P)\geq C(\lambda,\Lambda)\partial_{2}u(P+\rho\nu(P))/\rho,\qquad \forall \rho>1,
\end{equation}
where $\nu(P)$ denotes the unit outward normal vector to $\Omega^{c}(u)$ at $P$.

Indeed, for $\rho>0,$ we now define
\begin{equation*}
\begin{split}
\widetilde{B'_\rho}(P):=& B_\rho(P)\cap L_P,\\
 \mathcal{Q}_{\rho}(P):=&\{y+t\nu(P):y\in\widetilde{B'_\rho}(P),t\in (0,\rho)\},\\
 \eta:=&\partial_{2}u(P+\rho\nu(P)).
\end{split}
\end{equation*}
Recall that $L_P$ denotes the supporting line to $\Omega^c(u)$ at $P$. The convexity of $\Omega^c(u)$ implies that $\mathcal{Q}_{\rho}(P)\cap\Omega^c(u)=\emptyset.$

Let $v$ satisfy
\begin{equation*}
\left\{\begin{split}
a^{ij}(x)\partial_{ij}v(x)&=0 \quad\mbox{in}\quad \mathcal{Q}_{\rho}(P),\\
v=\eta \quad\mbox{on}&\quad \widetilde{B'_\rho}(P)+\rho\nu(P),\\
v=0 \quad\mbox{on}&\quad\partial\mathcal{Q}_{\rho}(P) \backslash(\widetilde{B'_\rho}(P)+\rho\nu(P)),
\end{split}\right.
\end{equation*}
where $a^{ij}(x)=F_{ij}(D^{2}u(x))$. Since $\partial_{2}u\geq 0$ satisfies (\ref{E6.10}), we have
\begin{equation*}\label{E5.53}
\partial_2 u\geq C(\lambda,\Lambda)\eta,\qquad\mbox{on}\quad \widetilde{B'_\rho}(P)+\rho\nu(P),
\end{equation*}
by the Harnack inequality. Using the comparison principle, we observe that
\begin{equation}\label{E5.54}
0<C(\lambda,\Lambda)v\leq\partial_{2}u\quad\mbox{in}\quad\mathcal{Q}_{\rho}(P).
\end{equation}
Combining (\ref{E5.54}) and Lemma \ref{le:2.2}, we have
\begin{equation}\label{E5.56}
\partial_{\nu}(\partial_{2}u)(P)\geq C(\lambda,\Lambda)\partial_{\nu}v(P)\geq C(\lambda,\Lambda)\eta/\rho.
\end{equation}

Since $\lim\limits_{t\rightarrow\infty}f'(t)=0,$ taking $\epsilon>0$ sufficiently small, one readily sees that
\begin{equation}\label{E5.65}
(\nu(P),e_1)>1/2.
\end{equation}
Thus (\ref{E5.43}) follows from (\ref{E5.56}) and (\ref{E5.65}).

\indent\textit{Step 3.3}
 Let $\rho=|P_2|$. Together with (\ref{E5.41}) and (\ref{E5.43}), we obtain
\begin{equation*}
\partial_{2}u(P+|P_2|\nu(P))\leq C(\lambda,\Lambda)\epsilon.
\end{equation*}
By the convexity of $u$, we have
\begin{equation*}
\sup_{B_1}\partial_{2}u\leq\sup_{B'_{1}(|P_2|e_2)}\partial_{2}u\leq C(\lambda,\Lambda)\partial_{2}u(P+|P_2|\nu(P))\leq C(\lambda,\Lambda)\epsilon,
\end{equation*}
where we invoke the Harnack inequality. Letting $\epsilon\rightarrow 0^+$, we then get $\partial_2 u(e_2)=0$. Hence $\partial_2 u\equiv 0$.
In other words, $u$ is a one dimensional global solution, which yields a contradiction.

\indent\textit{Step 4.}
We finish the proof by mathematical induction. Assume inductively that \(\partial_n u\equiv 0\) whenever \(u_{n-k}\in\mathcal{G}_c\) for some \(k\geq 1\), and consider
the case \(u_{n-k-1}\in\mathcal{G}_c\). Under this inductive hypothesis, we have $\partial_{n-1}u_{n-1}\equiv 0$, that is, $u_{n-1}$ is a lower dimensional global solution.
Moreover, we obtain the asymptotic function associated with \(u_{n-1}\). Repeating the preceding arguments, one readily finds that \(\partial_n u\equiv 0\), which yields a contradiction.
\end{proof}

\begin{lemma}
 \label{lemma 4.11}
Suppose that all hypotheses of Lemma \ref{le:4.4} hold. Then there exists a unique positive definite matrix $A_{n-1}\in\mathcal{S}_{(n-1)\times (n-1)}$ such that
(\ref{E1.13}) and (\ref{E1.25}) hold.
\end{lemma}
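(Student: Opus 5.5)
The natural route goes through the lower-dimensional solution $u_{n-1}$ of Lemma \ref{le:4.4} and a blow-down. By Lemma \ref{le:4.4}, $u_{n-1}(x')=\lim_{x_n\to-\infty}u(x',x_n)$ solves the $(n-1)$-dimensional obstacle problem (\ref{EE4.21}) for the convex operator $F_{n-1}$. It is convex, as a pointwise limit of convex functions. Moreover $\mathrm{Int}\,\Omega^c(u_{n-1})\neq\emptyset$ by the first part of the argument of Corollary \ref{cor:4.3}.

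\textbf{Step 1: $u_{n-1}$ is in $\mathcal{G}_c$ and its blow-down is quadratic.} I first show that $\Omega^c(u_{n-1})$ is compact. If $V(u_{n-1})\neq\emptyset$, then by Lemma \ref{le:2.4} there is a direction $e'$ with $u_{n-1}$ nonincreasing along $e'$. Since $\partial_n u\ge 0$, $u$ is then nonincreasing along $e'-se_n$ for all $s\ge 0$: at any $x$, $u_{n-1}(x'+te')\le u_{n-1}(x')\le u(x)$, and one combines this with convexity of $u$ along rays tending to $-\infty$ in $x_n$. I would then show $-\tfrac{e_n+e'}{\sqrt2}\in V(u)$ directly from the rays $\{x'_0+te'\}\times(-\infty,x_n]$ contained in the closure of the coincidence set. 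This contradicts $V(u)=\{-e_n\}$ from Theorem \ref{thm:thm4.1}. I expect this to be the main obstacle, since limit points of $\Omega^c(u)$ at $x_n=-\infty$ must be controlled. Tentatively, if it fails, I fall back on Theorem \ref{le:4.1}, whose induction shows $u_{n-k}\in\mathcal{G}_c$ leads to contradictions unless things are nondegenerate. Once compactness holds, Theorem \ref{thm:5} and Remark \ref{re:1} (for $n-1=1$), together with Lemma \ref{lemma 3.1} and Lemma \ref{lemma 3.3} (for $n-1=2$, with $F_{n-1}$ in place of $F$), give
\[
u_{n-1}(x')=\tfrac12(x'-x_0')^TA_{n-1}(x'-x_0')+o(|x'|^2),
\]
with $A_{n-1}>0$ and $F_{n-1}(A_{n-1})=1$. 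This is (\ref{E1.25}).

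\textbf{Step 2: convergence of $u_r$.} By the interior $C^{1,1}$ estimate, $\{u_r\}$ is bounded in $C^{1,1}_{loc}$, so every sequence $r_k\to\infty$ has a subsequence converging in $C^{1,\alpha}_{loc}$ for all $\alpha<1$ to some global solution $u_\infty$. By monotonicity in $x_n$, $u\ge u_{n-1}$, hence $u_r(x)\ge r^{-2}u_{n-1}(rx')\to\frac12 x'^TA_{n-1}x'$. For the upper bound I use that $\Omega^c(u)\cap\{x_n\ge -\rho\}$ has diameter $o(\rho)$, since $V(u)=\{-e_n\}$. Then $\Omega^c(u_\infty)$ is contained in the ray $\{x'=0,\ x_n\le 0\}$. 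Also $u_\infty$ is monotone in $x_n$, and its $x_n\to-\infty$ limit equals the blow-down of $u_{n-1}$. This uses the uniform-in-$x_n$ convergence coming from the $C^{1,1}$ bound and $u(x',x_n)\le u_{n-1}(x')+\text{error}$, controlled by comparison with translates.

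\textbf{Step 3: identifying $u_\infty$.} Since $\Omega^c(u_\infty)$ has empty interior, Lemma \ref{lemma 3.3}(i)'s argument (Evans--Krylov) shows that $u_\infty$ is a quadratic polynomial. It is nonnegative, vanishes on a ray in the $-e_n$ direction, and is bounded below by $\frac12x'^TA_{n-1}x'$. Hence $u_\infty$ has no $x_n$-dependence, so $u_\infty=\frac12x'^TA_{n-1}x'$. Uniqueness of the limit gives full convergence as $r\to\infty$, and uniqueness of $A_{n-1}$ follows since it is read off from the limit.
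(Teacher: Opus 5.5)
\textbf{The proposal has a genuine gap: Step 1 claims something that is false under the hypotheses of the lemma, and Steps 2--3 rest on it.}

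\emph{Why $u_{n-1}\notin\mathcal{G}_c$.} If $u(x',x_n)=0$, then $\partial_n u\ge 0$ gives $u(x',s)=0$ for all $s\le x_n$, hence $u_{n-1}(x')=0$. So $\Omega^c(u_{n-1})$ contains the projection of $\Omega^c(u)$ onto $\mathbb{R}^{n-1}$. If that projection lay in a slab of $\mathbb{R}^{n-1}$, then $\Omega^c(u)$ would lie in a slab of $\mathbb{R}^n$. This contradicts $\mathrm{MD}(\Omega^c(u))=+\infty$ from Theorem~\ref{le:4.1}. Hence $\Omega^c(u_{n-1})$ is never compact here.

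This is exactly what Steps 2--3 of the proof of Theorem~\ref{le:4.1} establish: $u_{n-1}\in\mathcal{G}_c$ forces $\partial_n u\equiv 0$, contradicting that $u$ is not lower dimensional. Falling back on that theorem therefore refutes your claim rather than rescuing it.

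Concretely, take $F=\triangle$ in $\mathbb{R}^2$ with the parabolic coincidence set $\{x_2\le -x_1^2\}$. Every vertical line eventually enters $\Omega^c(u)$, so $u_1\equiv 0$ and its blow-down is the zero matrix.

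\emph{Why your heuristic for $V(u_{n-1})=\emptyset$ fails.} Since $V(u)=\{-e_n\}$, the points of $\Omega^c(u)$ lying over $x'$ with $|x'|=t$ have depth $|x_n|$ with $|x_n|/t\to\infty$. They therefore accumulate only in the direction $-e_n$, which is consistent with $V(u)=\{-e_n\}$.

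\emph{Consequences for Steps 2--3.} Your interchange of limits (the $x_n\to-\infty$ limit of $u_\infty$ equals the blow-down of $u_{n-1}$) is false. The convergence $u(x',x_n)\to u_{n-1}(x')$ is not uniform at scale $|x'|\sim r$. The lower bound $u_r\ge r^{-2}u_{n-1}(r\,\cdot)$ then gives nothing, so positive definiteness is left unproved.

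\emph{What survives, and matches the paper's Step 1.} Any subsequential blow-down solves $F(D^2u_\infty)=1$ off the ray $-\mathbb{R}_+e_n$. As in Lemma~\ref{lemma 3.3}, it is a nonnegative quadratic $\frac12 x^TAx$. Vanishing on that ray gives $Ae_n=0$, hence the block form and (\ref{E1.25}). Uniqueness of the limit comes from Evans--Krylov on $B_\rho(\rho e_n)$, which gives $A=\lim_{\rho\to\infty}D^2u(\rho e_n)$.

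\emph{The missing idea, for $A_{n-1}>0$.} You need an $(n-1)$-dimensional solution in $\mathcal{G}_c$, taken at the natural scale of the slices rather than at fixed scale.
\begin{itemize}
\item The paper sets $\rho_k=\mathrm{diam}(\Omega^c_k(u))$ and $v_k(x)=\rho_k^{-2}u(\rho_kx',\rho_kx_n-k)$.
\item Since $\rho_k\to\infty$ (Theorem~\ref{le:4.1}) and $\rho_k/k\to 0$ (from $V(u)=\{-e_n\}$), convexity in $x_n$ gives $\partial_n v_\infty\equiv 0$.
\item Nondegeneracy shows that $v=v_\infty(\cdot,0)$ solves the $F_{n-1}$ problem with $\Omega^c(v)\subseteq\overline{B'_1}$. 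This set contains $0$ and a unit vector, so it has nonempty interior, and $v\in\mathcal{G}_c$.
\item The blow-down matrix $A^*_{n-1}$ of $v$ is positive definite with $F_{n-1}(A^*_{n-1})=1$.
\item Monotonicity in $x_n$ gives $A^*_{n-1}\le A_{n-1}$. Uniform ellipticity together with $F_{n-1}(A^*_{n-1})=F_{n-1}(A_{n-1})=1$ then forces $A^*_{n-1}=A_{n-1}>0$.
\end{itemize}
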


\begin{proof}
We divide the proof into two steps.

\indent\textit{Step 1. }
Let $r_k\rightarrow+\infty$. By $||D^{2}u_{r_k}||_{L^{\infty}(\mathbb{R}^{n})}\leq C(n,\lambda,\Lambda)$, up to a subsequence, we find that
\begin{equation*}
u_{r_k}\rightarrow u_\infty,\quad\mbox{in}\quad C^{1,\alpha}_{loc}(\mathbb{R}^{n}),
\end{equation*}
as $k\rightarrow+\infty$. Since $u_\infty$ is a global solution to the obstacle problem (\ref{E1.1}), using $V(u)=-e_n$, we have
\begin{equation*}
F(D^2 u_\infty)=1,\qquad\mbox{in} \quad\mathbb{R}^{n}\backslash (-\mathbb{R}_{+}e_n).
\end{equation*}
It is readily seen that $u_\infty(x)=\frac{1}{2}x^TAx\geq 0.$ From $\partial_n u_\infty(-e_n)=A(-e_n)=0$, we have
\begin{equation*}
A=\begin{pmatrix} A_{n-1} & 0 \\ 0 & 0 \end{pmatrix}.
\end{equation*}

The uniqueness of the matrix $A_{n-1}$ follows from the argument below. For any $\rho>1$, we have
\begin{equation*}
F(D^2 u)=1,\qquad\mbox{in}\quad B_{\rho}(\rho e_n).
\end{equation*}
Applying the Evans-Krylov estimate, we deduce that
 \begin{equation*}
[D^2 u]_{C^{\alpha}(B_{\rho/2}(\rho e_n))}\leq C\rho^{-\alpha}||D^2 u||_{L^{\infty}(B_{\rho}(\rho e_n))}\rightarrow 0,\quad\mbox{as}\quad\rho\rightarrow+\infty.
 \end{equation*}
 Then $A=\lim\limits_{\rho\rightarrow+\infty}D^2 u(\rho e_n)$ is well defined.

\indent\textit{Step 2. }
In this step, we will prove that $A_{n-1}>0$.

\indent\textit{Step 2.1 }
Let
 \begin{equation*}
 v_{k}(x):=\rho_k^{-2}u(\rho_kx',\rho_kx_n-k),\quad k=1,2,\cdots,
 \end{equation*}
 where $\rho_k:=\mbox{diam}(\Omega^{c}_{k}(u))$ (see (\ref{E1.19})). By Theorem \ref{le:4.1}, we have $\mathrm{MD}(\Omega^{c}(u))=+\infty$.
Then we observe that  $\rho_k\rightarrow+\infty$. Using $V(u)=-e_n$, we deduce that
 \begin{equation}\label{E4.103}
 \lim_{k\rightarrow+\infty}\rho_k/k=0.
 \end{equation}
Applying the interior $C^{1,1}$ estimate again, we obtain
 \begin{equation}\label{E5.60}
v_{k}\rightarrow v_{\infty}\quad\mbox{in}\quad C^{1,\alpha}_{loc}(\mathbb{R}^{n}),\quad\mbox{as}\quad k\rightarrow+\infty.
 \end{equation}
 Applying \cite[Proposition 2.9]{Ca02} again, it follows that $v_{\infty}$ is a global solution to the fully nonlinear obstacle problem (\ref{E1.1}).

We claim that
 \begin{equation}\label{E4.100}
\partial_n v_{\infty}=0.
 \end{equation}
 This claim is justified as follows. Fixing $r>0$, by $\partial_{nn}u\geq 0$, we deduce that
 \begin{equation*}
\begin{split}
\partial_{n}u(\rho_k x',-k/2)\frac{k}{2}&\leq u(\rho_kx',0)-u(\rho_kx',-k/2)\\
&\leq u(\rho_kx',0)\leq C(n,\lambda,\Lambda)(r\rho_k)^2,\qquad\mbox{in}\quad B_{r},
\end{split}
\end{equation*}
 where we are use the interior $C^{1,1}$ estimate. Since
 \begin{equation*}
\rho_kx_n-k\leq -k/2,\quad\mbox{for}\quad k\gg 1,
 \end{equation*}
 choosing $k$ sufficiently large, we find that
\begin{equation*}
\begin{split}
\partial_{n}v_{k}(x)&=\rho_k^{-1}\partial_{n}u(\rho_kx',\rho_kx_n-k)\\
&\leq \rho_k^{-1}\partial_{n}u(\rho_kx',-k/2)\leq  C(n,\lambda,\Lambda)r^2(\rho_k/k),\qquad\mbox{in}\quad B_{r}.
\end{split}
 \end{equation*}
Letting $k\rightarrow+\infty$, then we obtain (\ref{E4.100}).

\indent\textit{Step 2.2 }
Let $v(x'):=v_{\infty}(x',0)$. We claim that $\Omega^{c}(v)\subseteq B'_1$.

This claim is justified as follows. By (\ref{E4.100}), we deduce that $v$ is a global solution to (\ref{EE4.21}).
Since $v$ is convex, as already noted in Corollary \ref{cor:4.3}, $\Omega^{c}(v)\subseteq B'_1$ is a direct consequence of
\begin{equation}\label{E5.62}
\mathrm{Int}\,\Omega^{c}(v)\subseteq B'_1.
\end{equation}

On the contrary, we suppose that (\ref{E5.62}) fails. Then there exists $P'\in\mathbb{R}^{n-1}$ such that
\begin{equation}\label{E5.63}
B'_{2\delta}(P')\subseteq\Omega^{c}(v)\backslash B'_1,
\end{equation}
where $\delta>0$ is a small constant. Picking $P_k(\rho_k P',-k)\in\mathbb{R}^{n}$, we have
\begin{equation*}
\dist(P_k,\mathbb{R}e_n)\geq(1+2\delta)\rho_k>\mbox{diam}(\Omega^{c}_{k}(u)),\quad k=1,2,\cdots.
\end{equation*}
It is readily seen that $P_k\in\Omega(u)$. Let
\begin{equation*}
u(Q_k):=\sup_{B_{\delta\rho_k}(P_k)}u.
\end{equation*}
Using nondegeneracy, we have
\begin{equation*}
v_{k}\big(Q'_k/\rho_k,(Q_{k,n}+k)/\rho_k\big)=\rho_k^{-2}u(Q_k)\geq C(n,\lambda,\Lambda)\delta^2.
\end{equation*}

Notice that  $\big(Q'_k/\rho_k,(Q_{k,n}+k)/\rho_k\big)\in\overline{B_{\delta}}(P',0)$. Passing to a subsequence, we assume that
$Q'_k/\rho_k\rightarrow Q'\in\overline{B'_{\delta}}(P')$. By (\ref{E4.100}), we have
\begin{equation*}
v(Q')=\lim_{k\rightarrow+\infty}v_{k}(Q'_k/\rho_k,0)\geq C\delta^2.
\end{equation*}
This contradicts (\ref{E5.63}), as we wanted.

\indent\textit{Step 2.3 }
We verify that $\mathrm{Int}\,\Omega^{c}(v)\neq\emptyset$. Let us assume by contradiction that $\mathrm{Int}\,\Omega^{c}(v)=\emptyset$. Then $v$ is a quadratic polynomial.
 By $\Omega^{c}(v)\subseteq B'_1$, we observe that $\Omega^{c}(v)$ is a single point. Since the origin $0\in\Omega^{c}(u)$, using the monotonicity of $u$,
we deduce that $v(0)=0$, i.e., $\Omega^{c}(v)=\{0\}$.

On the other hand, we can select a sequence $M_k(M'_k,-k)\in\Omega^{c}_{k}(u)$ such that
\begin{equation*}
|M'_k|=\mbox{diam}\Omega^{c}_{k}(u)=\rho_k.
\end{equation*}
Up to a subsequence, we see that $(M'_k/\rho_k)\rightarrow M'$ with $|M'|=1$. It is not hard to see that $v(M')=0,$ i.e., $M'\in\Omega^{c}(v)$, which yields the desired contradiction.

\indent\textit{Step 2.4 }
Using $\Omega^{c}(v)\subseteq B'_1$, we deduce that
\begin{equation*}
v_r(x')\rightarrow\frac{1}{2}x'^TA^*_{n-1}x',\quad\mbox{as}\quad r\rightarrow +\infty,
\end{equation*}
where the matrix $A^*_{n-1}>0$. It follows that
\begin{equation}\label{E5.66}
F_{n-1}(A^*_{n-1})=F_{n-1}(A_{n-1})=1.
\end{equation}

Since $v(x')\in\mathcal{G}_c$, for $\epsilon>0$, we may choose $R(\epsilon)>1$ large such that
\begin{equation*}
v(x')\geq\frac{1}{2}x'^{T}\left(A^*_{n-1}-\frac{\epsilon}{2}I_{n-1}\right)x',\qquad\mbox{for}\quad |x'|>R(\epsilon),
\end{equation*}
where we use Theorem \ref{thm:5} (or (\ref{E1.6})). Fix $R>R(\epsilon)$. By (\ref{E5.60}), we deduce that
\begin{equation*}
\rho_k^{-2}u(\rho_k R e-ke_n)=v_k(Re)\geq\frac{1}{2}R^2e^T(A^*_{n-1}-\epsilon I)e,\quad\forall e\in\partial B'_1,
\end{equation*}
as $k\rightarrow+\infty.$ By the monotonicity of $u$, we have
\begin{equation*}
\rho_k^{-2}u(\rho_kR e-ke_n)\leq\rho_k^{-2}u(\rho_kR e)\leq\frac{1}{2}R^2e^T(A_{n-1}+\epsilon I)e,\quad\forall e\in\partial B'_1,
\end{equation*}
as $k\rightarrow+\infty.$ Then we conclude that
\begin{equation}\label{E4.107}
A^*_{n-1}\leq A_{n-1},
\end{equation}
by taking $\epsilon>0$ sufficiently small.

Together with (\ref{E5.66}) and (\ref{E4.107}), we obtain $A_{n-1}=A^*_{n-1}>0.$
\end{proof}

\begin{proof}[Proof of Theorem \ref{thm:1}]
When $\operatorname{Int}\Omega^{c}(u)=\emptyset$, we have $u(x)=\frac{1}{2}x^TAx$. It remains to consider the case $\operatorname{Int}\Omega^{c}(u)\neq\emptyset$.

 Consider the one dimensional case $n=1$. If $\Omega^{c}(u)$ is compact, its convexity forces $\Omega^{c}(u)$ to be a closed interval. If $\Omega^{c}(u)$ is non-compact, together with $\Gamma(u)\neq\emptyset$ and the convexity of $\Omega^{c}(u)$, we find that $\Omega^{c}(u)$ is a ray in $\mathbb{R}^{1}$.
Upon translating and rotating coordinates, $u$ takes the form (\ref{E1.5}).

Next we consider the case $n\geq 2$. Under the assumption that $\operatorname{MD}(\Omega^{c}(u))<+\infty,$ invoking Theorem \ref{le:4.1}, we deduce that either
$u\in\mathcal{G}_c$, or $u$ is a lower dimensional global solution. If $u\in\mathcal{G}_c$, then $\Omega^{c}(u)$ is a compact convex set. As already mentioned in
Remark \ref{re:2.13}, the local $C^{1,\alpha}$ regularity of the free boundary follows. Using a simple compactness argument, we conclude that $\Omega^c(u)$ is a $C^{1,\alpha}$ domain.

Under the assumption that $\operatorname{MD}(\Omega^{c}(u))=+\infty,$ we have $V(u)\neq\emptyset$. Assuming that $u$ is not a lower dimensional global solution, by a suitable
translation and rotation of the coordinate system, we obtain $V(u)=\{-e_n\}$, where we appeal to Theorem \ref{thm:thm4.1}. Hence $\partial_n u\geq 0$. It is easily seen that $u\in\mathcal{G}_{\infty}$. Otherwise, suppose that $\partial_{\tau}u\geq 0$ for some $\tau\neq e_n$. Using $u(0)=0$, we get $u(-\mathbb{R}_{+}\tau)\equiv 0$, i.e.,
$-\tau\in V(u)$, a contradiction, as we wanted to see.

The inclusion (a) is trivial. We have already shown that inclusion (b) holds; see Lemma \ref{lemma 4.11}. It remains to verify (\ref{E1.18}). In fact, the first limit follows from $\operatorname{MD}(\Omega^{c}(u))=+\infty$. The second limit was established in the proof of Lemma \ref{lemma 4.11}; see (\ref{E4.103}). For $n=2$, the third limit
is trivial. For $n\geq 3$, given sufficiently small $\epsilon>0$, by (\ref{E5.60}) and (\ref{E4.100}), we find that
 \begin{equation*}
 \left\{\begin{split}
\rho_k^{-1}\mbox{diam}(\Omega^{c}_{k}(u))<\mbox{diam}(\Omega^{c}(v))+\epsilon,\\
\rho_k^{-1}\operatorname{MD}(\Omega^{c}_{k}(u))>\operatorname{MD}(\Omega^{c}(v))-\epsilon,
\end{split}\right.
\qquad\mbox{for}\quad k\gg 1.
 \end{equation*}
By Theorem \ref{thm:5}, we have
 \begin{equation*}
\mbox{diam}\big(\Omega^{c}(v)\big)/\operatorname{MD}\big(\Omega^{c}(v)\big)= C(A_{n-1},F).
\end{equation*}
Hence the third limit follows. The proof is complete.
\end{proof}

\section{ Proof of Theorem \ref{cor:3} }

Next we briefly sketch the proof of Theorem \ref{cor:3}.

(1) Suppose that all hypotheses of Theorem \ref{cor:3} hold. Then the nondegeneracy of $u$ and the interior $C^{1,1}$ estimate are known. We refer to \cite{FS01,Lee02}
for further details.

(2) If (\ref{E1.15}) holds, using the convexity of $u$, one may verify that $u$ is a half-space solution (cf.\cite{Lee02}).

(3) Assuming (\ref{E2.20}) holds, with the following Lemma \ref{le:5.2} at hand, it is standard to establish the local $C^1$ regularity of the free boundary $\Gamma(u)$. We refer to
\cite[Chapter 4.1]{PSU} for more details. Moreover, once the local $C^1$ regularity of the free boundary is established, the local $C^{1,\alpha}$ regularity follows (cf.\cite{RT}).

(4) Repeating the argument in the proof of Theorem \ref{thm:1}, we immediately obtain Theorem \ref{cor:3}.

\begin{lemma}
 \label{le:5.2}
Let $F$ be a concave operator, and let $u$ be a viscosity solution to the obstacle problem in $B_1$. Suppose that $u$ is a convex function. Suppose further that
$0\leq\epsilon_0\leq\lambda/(8n\Lambda^2)$. If
\begin{equation*}
C_0\partial_e u-u\geq -\epsilon_0\qquad\mbox{in}\qquad B_{1},
\end{equation*}
for some $C_0\geq 0$, where $e\in\partial B_1$, then
\begin{equation*}
C_0\partial_e u-u\geq 0\qquad\mbox{in}\qquad B_{1/2}.
\end{equation*}
\end{lemma}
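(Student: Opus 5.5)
We argue by contradiction, following the standard proof of the analogous lemma for the Laplacian (cf.\ \cite[Chapter 4.1]{PSU}). The idea is to add a small quadratic to $C_0\partial_e u-u$ so that it becomes a supersolution of the linearized equation in $\Omega(u)$, and then apply the minimum principle in a ball around a bad point. Suppose there is $y\in B_{1/2}$ with $C_0\partial_e u(y)-u(y)<0$. Since $u\ge0$ and $Du=0$ on $\Omega^c(u)$, necessarily $y\in\Omega(u)$. Set $c:=\lambda/(2n\Lambda^2)$ and
\begin{equation*}
h(x):=C_0\partial_e u(x)-u(x)+c|x-y|^2 ,
\end{equation*}
and consider $h$ on the open set $U:=\Omega(u)\cap B_{1/2}(y)\subset B_1$.

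First, I will derive the differential inequality for $h$. In $\Omega(u)$ we have $F(D^2u)=1$ with $F$ concave, so Evans--Krylov gives $u\in C^{2,\alpha}_{loc}(\Omega(u))$. Since $F\in C^{1,\alpha_F}$, the standard difference-quotient argument shows that $\partial_e u$ solves $a^{ij}\partial_{ij}(\partial_e u)=0$ in $\Omega(u)$, where $a^{ij}=F_{ij}(D^2u)$ and $\lambda I\le (a^{ij})\le\Lambda I$.

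Next, I need a lower bound for $a^{ij}\partial_{ij}u$. Here the convexity of $u$ is used: $D^2u\ge0$. By (H2) and uniform ellipticity (Notation (4)),
\begin{equation*}
1=F(D^2u)-F(0)\le\Lambda\|D^2u\|,
\end{equation*}
so $\|D^2u\|\ge1/\Lambda$. Hence $a^{ij}\partial_{ij}u\ge\lambda\,\mathrm{tr}D^2u\ge\lambda\|D^2u\|\ge\lambda/\Lambda$. Consequently, in $U$,
\begin{equation*}
a^{ij}\partial_{ij}h\le-\frac{\lambda}{\Lambda}+2c\,\mathrm{tr}(a^{ij})\le-\frac{\lambda}{\Lambda}+2cn\Lambda=0 ,
\end{equation*}
so $h$ is a supersolution and attains its minimum over $\overline U$ on $\partial U$.

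Finally, I will check the boundary values. On $\partial\Omega(u)\cap B_{1/2}(y)$ we have $u=|Du|=0$, so $h=c|x-y|^2\ge0$. On $\partial B_{1/2}(y)\cap\overline{\Omega(u)}$, the hypothesis gives
\begin{equation*}
h\ge-\epsilon_0+\frac{c}{4}=-\epsilon_0+\frac{\lambda}{8n\Lambda^2}\ge0 ,
\end{equation*}
which is exactly where the restriction $\epsilon_0\le\lambda/(8n\Lambda^2)$ enters. Thus $h\ge0$ on $\partial U$, hence in $U$, contradicting $h(y)=C_0\partial_eu(y)-u(y)<0$.

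The only delicate step is justifying the linearized equation for $\partial_e u$. I will handle it through interior $C^{2,\alpha}$ regularity in $\Omega(u)$, available because $F$ is concave, together with $F\in C^1$. Alternatively, one can work with difference quotients $(u(x+te)-u(x))/t$ and use the viscosity comparison for the extremal Pucci operators, which avoids any pointwise differentiation.
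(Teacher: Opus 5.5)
Your proof is correct and is essentially the paper's argument. It uses the same barrier $C_0\partial_e u-u+\frac{\lambda}{2n\Lambda^2}|x-y|^2$, the same use of convexity to get $a^{ij}\partial_{ij}u\ge\lambda\,\mathrm{tr}D^2u\ge\lambda/\Lambda$, and the minimum principle in the non-contact set. The one difference is harmless and slightly cleaner: you work in $\Omega(u)\cap B_{1/2}(y)$ rather than $\Omega(u)\cap B_1$, so the estimate $|x-y|^2=1/4$ is taken on a sphere lying inside $B_1$, where the hypothesis actually holds.
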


\begin{proof}
 Now let us assume by contradiction that there exists $x_0\in B_{1/2}$ such that
\begin{equation*}\label{E7.21}
C_0\partial_e u(x_0)-u(x_0)<0,
\end{equation*}
and consider the function
\begin{equation*}\label{E7.22}
w(x)=C_0\partial_e u(x)-u(x)+\frac{\lambda|x-x_0|^2}{2n\Lambda^2}.
\end{equation*}

By the convexity of $u$, we have
\begin{equation*}\label{E7.23}
1=F(D^{2}u)\leq \mathcal{M}^+(D^{2}u)\leq \Lambda\mbox{tr}(D^{2}u)\qquad\mbox{in}\quad \Omega.
\end{equation*}
Then we deduce that
\begin{equation*}\label{E7.24}
a^{ij}\partial_{ij}u\geq\mathcal{M}^-(D^{2}u)\geq\lambda\mbox{tr}(D^{2}u)\geq\frac{\lambda}{\Lambda}\qquad\mbox{in}\quad \Omega,
\end{equation*}
where $a^{ij}=F_{ij}(D^{2}u)$. Since $\partial_e u$ solves the linear equation $a^{ij}\partial_{ij}v=0$, we obtain
\begin{equation*}\label{E7.25}
a^{ij}\partial_{ij}w\leq -a^{ij}\partial_{ij}u+\mathcal{M}^+(\frac{\lambda}{n\Lambda^2}I_{n\times n})\leq 0
\qquad\mbox{in}\quad \Omega.
\end{equation*}
Hence, by the maximum principle,
\begin{equation*}\label{E7.26}
\min_{\partial(\Omega\cap B_1)}w=\min_{\Omega\cap B_1}w\leq w(x_0)<0,
\end{equation*}
since $w\geq 0$ on $\partial\Omega\cap B_1$. Using $|x-x_0|^2\geq 1/4$ on $\partial B_1$, it follows that
\begin{equation*}
\lambda/(8n\Lambda^2)-\epsilon_0\leq\min_{\partial B_1\cap\Omega}w<0,
\end{equation*}
which yields a contradiction provided $\epsilon_0\leq \lambda/(8n\Lambda^2)$.
\end{proof}

\bibliographystyle{elsarticle-num}

\end{document}